\documentclass{amsart}
\usepackage{amssymb}
\usepackage{latexsym}
\usepackage{euscript}



\def\sfi{{\text{\rm{\textsf i}}}}

\def\sD{{\mathfrak D}}      
   \def\sH{{\mathfrak H}}   
      \def\sL{{\mathfrak L}}
\def\sM{{\mathfrak M}}   \def\sN{{\mathfrak N}}

      \def\dC{{\mathbb C}}

      \def\dR{{\mathbb R}}

   \def\dZ{{\mathbb Z}}

      \def\cC{{\EuScript C}}

\def\wt#1{{{\widetilde #1} }}

\def\wh#1{{{\widehat #1} }}

\def\bm\chi{\mbox{\boldmath$\chi$}}

\def\RE{{\rm Re\,}}

\def\ker{{\rm ker\,}}
\def\ran{{\rm ran\,}}
\def\cran{{\rm \overline{ran}\,}}
\def\dom{{\rm dom\,}}
\def\mul{{\rm mul\,}}

\def\cdom{{\rm \overline{dom}\,}}
\def\clos{{\rm clos\,}}
\def\col{{\rm col\,}}
\def\dim{{\rm dim\,}}

\let\xker=\ker \def\ker{{\xker\,}}
\def\spn{{\rm span\,}}

\def\sgn{{\rm sgn\,}}
\def\sign{{\rm sign\,}}

\def\uphar{{\upharpoonright\,}}

\newcommand\tA{\widetilde{A}}

\def\Ext{{\rm Ext\,}}

\newcommand {\sk}[3]{\left#1#2\right#3}  
\renewcommand {\l}{\lambda}
\renewcommand {\k}{\kappa}

\newcommand {\e}{\varepsilon}
\newcommand {\ov}{\overline}

\renewcommand {\r}{\rho}
\newcommand\ff{\varphi}
\newcommand {\p}{\perp}

\newtheorem{theorem}{Theorem}[section]
\newtheorem{proposition}[theorem]{Proposition}
\newtheorem{corollary}[theorem]{Corollary}
\newtheorem{lemma}[theorem]{Lemma}

\theoremstyle{definition}

\newtheorem{remark}[theorem]{Remark}
\newtheorem{definition}[theorem]{Definition}

\numberwithin{equation}{section}

\begin{document}

\title[Completion, extension, factorization, and lifting of operators]
{Completion, extension, factorization, and lifting of operators with
a negative index}
\author{D.~Baidiuk}
\author{S.~Hassi}

\address{Department of Mathematics and Statistics \\
University of Vaasa \\
P.O. Box 700, 65101 Vaasa \\
Finland} \email{dbaidiuk@uwasa.fi}

\address{Department of Mathematics and Statistics \\
University of Vaasa \\
P.O. Box 700, 65101 Vaasa \\
Finland} \email{sha@uwasa.fi}



\keywords{Completion, extension, factorization, lifting of
operators}

\subjclass[2010]{Primary 46C20, 47A20, 47A63, 47B25; Secondary
47A06, 47B65}

\begin{abstract}
The famous results of M.G. Kre\u{\i}n concerning the description of
selfadjoint contractive extensions of a Hermitian contraction $T_1$
and the characterization of all nonnegative selfadjoint extensions
$\wt A$ of a nonnegative operator $A$ via the inequalities $A_K\le
\wt A \le A_F$, where $A_K$ and $A_F$ are the Kre\u{\i}n-von Neumann
extension and the Friedrichs extension of $A$, are generalized to
the situation, where $\wt A$ is allowed to have a fixed number of
negative eigenvalues. These generalizations are shown to be possible
under a certain minimality condition on the negative index of the
operators $I-T_1^*T_1$ and $A$, respectively; these conditions are
automatically satisfied if $T_1$ is contractive or $A$ is
nonnegative, respectively.

The approach developed in this paper starts by establishing first a
generalization of an old result due to Yu.L. Shmul'yan on
completions of $2\times 2$ nonnegative block operators. The
extension of this fundamental result allows us to prove analogs of
the above mentioned results of M.G. Kre\u{\i}n and, in addition, to
solve some related lifting problems for $J$-contractive operators in
Hilbert, Pontryagin and Kre\u{\i}n spaces in a simple manner. Also
some new factorization results are derived, for instance, a
generalization of the well-known Douglas factorization of Hilbert
space operators. In the final steps of the treatment some very
recent results concerning inequalities between semibounded
selfadjoint relations and their inverses turn out to be central in
order to treat the ordering of non-contractive selfadjoint operators
under Cayley transforms properly.
\end{abstract}

\maketitle


\section{Introduction}

Almost 70 years ago in his famous paper \cite{Kr47} M.G.~Kre\u{\i}n
proved that for a densely defined nonnegative operator $A$ in a
Hilbert space there are two extremal extensions of $A$, the
Friedrichs (hard) extension $A_F$ and the Kre\u{\i}n-von Neumann
(soft) extension $A_K$, such that every nonnegative selfadjoint
extension $\wt A$ of $A$ can be characterized by the following two
inequalities:
\[
 (A_F+a)^{-1}\le (\tA+a)^{-1}\le (A_K+a)^{-1},
 \quad a>0.
\]
To obtain such a description he used Cayley transforms of the form
\[
 T_1=(I-A)(I+A)^{-1} \quad T=(I-\wt A)(I+\wt A)^{-1},
\]
to reduce the study of unbounded operators to the study of
contractive selfadjoint extensions $T$ of a Hermitian nondensely
defined contraction $T_1$. In the study of contractive selfadjoint
extensions of $T_1$ he introduced a notion which is nowadays called
``the shortening of a bounded nonnegative operator $H$ to a closed
subspace $\sN$'' of $\sH$ as the (unique) maximal element in the set
\[
  \{\,D \in [\sH] :\, 0 \leq D \leq H, \, \ran D \subset \sN\,\},
\]
which is denoted by $H_\sN$; cf. \cite{AD,AT,Pek}. Using this notion
he proved the existence of a minimal and maximal contractive
extension $T_m$ and $T_M$ of $T_1$ and that $T$ is a selfadjoint
contractive extension of $T_1$ if and only if $T_m\le T\le T_M$,
more explicitly that $T=T_m+(I+T)\sN$ and $T=T_M-(I-T)\sN$ when
$\sN=\sH\ominus\dom T_1$.

Later the study of nonnegative selfadjoint extensions of $A\ge 0$
was generalized to the case of nondensely defined operators $A\ge 0$
by T.~Ando and K.~Nishio \cite{AN}, as well as to the case of linear
relations (multivalued linear operators) $A\ge 0$ by E.A.~Coddington
and H.S.V.~de~Snoo \cite{CS}. Further studies followed this work of
M.G. Kre\u{\i}n; the approach in terms of ``boundary conditions'' to
the extensions of a positive operator $A$ was proposed by
M.I.~Vishik \cite{V} and M.S.~Birman \cite{B}; an exposition of this
theory based on the investigation of quadratic forms can be found
from \cite{AS}. An approach to the extension theory of symmetric
operators based on abstract boundary conditions was initiated even
earlier by J.W. Calkin \cite{Cal39} under the name of reduction
operators, and later, independently the technique of boundary
triplets was introduced to formalize the study of boundary value
problems in the framework of general operator theory; see
\cite{Koch,Bruk,GG,DM1,MMM2,DM2}. Later the extension theory of
unbounded symmetric Hilbert space operators and related resolvent
formulas originating also from the work of M.G. Kre\u{\i}n
\cite{Kr44,Kr46}, see also e.g. \cite{LT}, was generalized to the
spaces with indefinite inner products in the well-known series of
papers by H. Langer and M.G. Kre\u{\i}n, see e.g. \cite{KL1,KL2},
and all of this has been further investigated, developed, and
extensively applied in various other areas of mathematics and
physics by numerous other researchers.

In spite of the long time span, natural extensions of the original
result of M.G. Kre\u{\i}n in \cite{Kr47} have not occurred in the
literature. Obviously the most closely related result appears in
\cite{CG92}, where for a given pair of a row operator
$T_r=(T_{11},T_{12})\in[\sH_1\oplus\sH_1',\sH_2]$ and a column
operator $T_c=\col(T_{11},T_{21})\in[\sH_1,\sH_2\oplus\sH_2']$ the
problem for determining all possible operators $\wt
T\in[\sH_1\oplus\sH_1',\sH_2\oplus\sH_2']$ acting from the Hilbert
space $\sH_1\oplus\sH_1'$ to the Hilbert space $\sH_2\oplus\sH_2'$
such that
\[
 P_{\sH_2}\wt T=T_r, \quad \wt T\uphar \sH_1=T_c,
\]
and such that the following negative index (number of negative eigenvalues)
conditions are satisfied
\[
 \kappa_1:=\nu_-(I-\wt T^*\wt T)=\nu_-(I-T_c^*T_c),\quad
 \kappa_2:=\nu_-(I-\wt T\wt T^*)=\nu_-(I-T_rT_r^*),
\]
is considered. The problem was solved in \cite[Theorem~5.1]{CG92}
under the condition $\kappa_1,\kappa_2<\infty$. In the literature
cited therein appears also a reference to an unpublished manuscript
by H. Langer and B. Textorius with the title ``Extensions of a
bounded Hermitian operator $T$ preserving the number of negative
squares of $I-T^*T$'', where obviously a similar problem for a given
bounded Hermitian (column operator) $T$ has been investigated; see
\cite[Section~6]{CG92}. However, in \cite{CG92} the existence of
possible extremal extensions in the solution set in the spirit of
\cite{Kr47}, when it is nonempty, have not been investigated. Also
possible investigations of analogous results for unbounded symmetric
operators with a fixed negative index seem to be unavailable in the
literature.

In this paper we study specific classes of such
``quasi-contractive'' bounded symmetric operators $T_1$ with
$\nu_-(I-T_1^*T_1)<\infty$ as well as ``quasi-nonnegative''
operators $A$ with $\nu_-(A)<\infty$ and the existence and
description of all possible selfadjoint extensions $T$ and $\wt A$
of them which preserve the given negative indices
$\nu_-(I-T^2)=\nu_-(I-T_1^*T_1)$ and $\nu_-(\wt A)=\nu_-(A)$,
respectively, under a further minimality condition on the negative
index $\nu_-(I-T_1^*T_1)$ and $\nu_-(A)$. Under such conditions it
is shown that if there is a solution then there are again two
extremal extensions which then describe the whole solution set via
two operator inequalities, just as in the original paper of
M.G.~Kre\u{\i}n. The approach developed in this paper differs from
the approach in \cite{Kr47}. In fact, the approach used in a recent
paper of Hassi, Malamud and de Snoo \cite{HMS04}, a technique
appearing also in an earlier paper of Kolmanovich and Malamud
\cite{KM1}, will be successfully generalized. In \cite{HMS04} the
original results of M.G.~Kre\u{\i}n have been proved in the general
setting of a not necessarily densely defined nonnegative operator
and, more generally, for a nonnegative linear relation $A$.

The starting point in our approach is to establish a generalization
of an old result due to Yu.L. Shmul'yan \cite{S59} on completions of
$2\times 2$ nonnegative block operators where the result was applied
for introducing so-called Hellinger operator integrals. Our
extension of this fundamental result is given in Section \ref{sec1};
see Theorem \ref{T:1} (for the case $\kappa<\infty$) and Theorem
\ref{T:1ext} (for the case $\kappa=\infty$). Obviously, these
results can be considered to be the most important inventions in the
present paper and it is possible that several further applications
for them will occur in forthcoming literature.

In this paper we will extensively apply Theorem \ref{T:1}. In
Section \ref{sec2} this result is specialized to a class of $2\times
2$ block operators to characterize occurrence of a minimal negative
index for the so-called Schur complement of the block operator, see
Theorem \ref{thmB}. This result can be viewed also as a
factorization result and, in fact, it yields a generalization of the
well-known Douglas factorization of Hilbert space operators in
\cite{Douglas}, see Proposition \ref{BKcor1}, which is completed by
a generalization of Sylvester's criterion on additivity of inertia
on Schur complements in Proposition~\ref{sylvester}. In
Section~\ref{sec3} Theorem \ref{T:1}, or its special case Theorem
\ref{thmB}, is applied to solve lifting problems for $J$-contractive
operators in Hilbert, Pontryagin and Kre\u{\i}n spaces in a new
simple way, the most general version of which is formulated in
Theorem~\ref{Lifthm}: this result was originally proved in
\cite[Theorem~2.3]{CG89} with the aid of \cite[Theorem~5.3]{ACG87};
for a special case, see also \cite{Drit90,DritRov90}. In the Hilbert
space case this problem has been solved in \cite{AG82,DaKaWe,ShYa},
further proofs and facts can be found e.g. from
\cite{Ar2006,AHS2007,BN2,KM1,MMM3}.

Section \ref{sec4} contains the extension of the fundamental result
of M.G.~Kre\u{\i}n in \cite{Kr47}, see Theorem~\ref{T:contr}, which
characterizes the existence and gives a description of all
selfadjoint extension $T$ of a bounded symmetric operator $T_1$
satisfying the following minimal index condition
$\nu_-(I-T^2)=\nu_-(I-T_{11}^2)$ by means of two extreme extensions
via $T_m\le T\le T_M$. In Section~\ref{sec5} selfadjoint extensions
of unbounded symmetric operators, and symmetric relations, are
studied under a similar minimality condition on the negative index
$\nu_-(A)$; the main result there is Theorem~\ref{KreinThm}. It is a
natural extension of the corresponding result of M.G.~Kre\u{\i}n in
\cite{Kr47}. The treatment here uses Cayley transforms and hence is
analogous to that in \cite{Kr47}. However, the existence of two
extremal extensions in this setting and the validity of all the
operator inequalities appearing therein depend essentially of very
recent ``antitonicity results'' proved for semibounded selfadjoint
relations in \cite{BHSW2014} concerning correctness of the
implication $H_1\le H_2$ $\Rightarrow$ $H_2^{-1}\le H_1^{-1}$ in the
case that $H_1$ and $H_2$ have some finite negative spectra. In this
section also an analog of the so-called Kre\u{\i}n's uniqueness
criterion for the equality $T_{m}=T_{M}$ is established.


\section{A completion problem for block operators}
\label{sec1}

By definition the modulus $|C|$ of a closed operator $C$ is the
nonnegative selfadjoint operator $|C|=(C^*C)^{1/2}$. Every closed
operator admit a polar decomposition $C=U|C|$, where $U$ is a
(unique) partial isometry with the initial space $\cran |C|$ and the
final space $\cran C$, cf. \cite{Kato}. For a selfadjoint operator
$H=\int_{\dR} t\, dE_t$ in a Hilbert space $\sH$ the partial
isometry $U$ can be identified with the signature operator, which
can be taken to be unitary: $J=\sign(H)=\int_{\dR}\,\sign(t)\,dE_t$,
in which case one should define $\sign(t)=1$ if $t\ge 0$ and
otherwise $\sign(t)=-1$.

\subsection{Completion to operator blocks with finite negative
index.}

The following theorem solves a completion problem for a bounded
incomplete block operator $A^0$ of the form
\begin{equation}\label{A0}
 A^0=
  \begin{pmatrix}
   A_{11}&A_{12}\\
   A_{21}&\ast
  \end{pmatrix}
  \begin{pmatrix}
   \sH_1\\
   \sH_2
  \end{pmatrix}
  \to
  \begin{pmatrix}
   \sH_1\\
   \sH_2
  \end{pmatrix}
\end{equation}
in the Hilbert space $\sH=\sH_1\oplus\sH_2$.

\begin{theorem}\label{T:1}
Let $\sH=\sH_1\oplus\sH_2$ be an orthogonal decomposition of the
Hilbert space $\sH$ and let $A^0$ be an incomplete block operator of
the form \eqref{A0}. Assume that $A_{11}=A_{11}^*$ and
$A_{21}=A_{12}^*$ are bounded, $\nu_-(A_{11})=\k<\infty$, where
$\k\in\mathbb{Z}_+$, and let $J=\sign(A_{11})$ be the (unitary)
signature operator of $A_{11}$. Then:
\begin{enumerate}\def\labelenumi{\rm(\roman{enumi})}
\item
 There exists a completion $A\in[\sH]$ of $A^0$ with some
 operator $A_{22}=A_{22}^*\in[\sH_2]$ such that
 $\nu_-(A)=\nu_-(A_{11})=\k$ if and only if
 \begin{equation}\label{E:1}
  \ran A_{12}\subset\ran |A_{11}|^{1/2}.
 \end{equation}

\item
In this case the operator $S=|A_{11}|^{[-1/2]}A_{12}$, where
$|A_{11}|^{[-1/2]}$ denotes the (generalized) Moore-Penrose
inverse of $|A_{11}|^{1/2}$, is well defined and
$S\in[\sH_2,\sH_1]$. Moreover, $S^*JS$ is the smallest operator
in the solution set
\begin{equation}\label{E:sol}
 \mathcal{A}:=\sk\{{A_{22}=A_{22}^*\in[\sH_2]:\, A=(A_{ij})_{i,j=1}^{2}:\nu_-(A)=\k
 }\}
\end{equation}
and this solution set admits a description as the (semibounded)
operator interval given by
\[
  \mathcal{A}=\sk\{{A_{22}\in[\sH_2]:\, A_{22}=S^*JS+Y,\, Y=Y^*\ge 0}\}.
\]
\end{enumerate}
\end{theorem}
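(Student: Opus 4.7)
The plan is to reduce the general-signature completion problem to the classical Shmul'yan completion theorem for nonnegative blocks, by spectrally splitting $\sH_1$ along the signature operator $J$ and then applying a Frobenius--Schur congruence that absorbs the (finite-dimensional) negative part of $A_{11}$ into the $(2,2)$-entry. The hypothesis $\nu_-(A_{11})=\k<\infty$ is exactly what makes this reduction work: the negative spectral subspace of $A_{11}$ is finite-dimensional and $A_{11}$ restricted to it is strictly negative, hence invertible, so it can be used as a pivot.

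\textbf{Main steps.} I would first decompose $\sH_1=\sH_1^+\oplus \sH_1^-$, where $\sH_1^-$ is the spectral subspace of $A_{11}$ associated to $(-\infty,0)$ and has dimension $\k$. With this splitting, $A_{11}=A_{11}^+\oplus(-A_{11}^-)$ with $A_{11}^+\ge 0$ on $\sH_1^+$ and $A_{11}^-$ positive definite (and hence invertible) on $\sH_1^-$; the signature operator becomes $J=I_{\sH_1^+}\oplus (-I_{\sH_1^-})$. Splitting $A_{12}$ accordingly as $A_{12}=\left(\begin{smallmatrix}A_{12}^+\\A_{12}^-\end{smallmatrix}\right)$, one has for any $A_{22}=A_{22}^*\in[\sH_2]$ the Frobenius congruence (pivot $-A_{11}^-$) showing that $A$ is congruent to $(-A_{11}^-)\oplus \widehat A$, where
\[
\widehat A=\begin{pmatrix}A_{11}^+ & A_{12}^+\\ (A_{12}^+)^* & A_{22}+(A_{12}^-)^*(A_{11}^-)^{-1}A_{12}^-\end{pmatrix}.
\]
Sylvester's inertia law then gives $\nu_-(A)=\k+\nu_-(\widehat A)$, so the condition $\nu_-(A)=\k$ is equivalent to $\widehat A\ge 0$.

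\textbf{Reduction to classical Shmul'yan.} The block $\widehat A$ is a standard nonnegative completion problem with $(1,1)$-entry $A_{11}^+\ge 0$, so Shmul'yan's original theorem applies: $\widehat A\ge 0$ iff $\ran A_{12}^+\subset \ran (A_{11}^+)^{1/2}$ and the $(2,2)$-entry dominates the Shmul'yan minimum $(S^+)^*S^+$ with $S^+:=(A_{11}^+)^{[-1/2]}A_{12}^+$. The range condition is exactly \eqref{E:1}, because $\ran A_{12}^-\subset \sH_1^-=\ran (A_{11}^-)^{1/2}$ is automatic on the finite-dimensional invertible part. A direct calculation using the block form of $J$ gives the identity $S^*JS=(S^+)^*S^+-(A_{12}^-)^*(A_{11}^-)^{-1}A_{12}^-$, so the Shmul'yan dominance $A_{22}+(A_{12}^-)^*(A_{11}^-)^{-1}A_{12}^-\ge (S^+)^*S^+$ translates exactly to $A_{22}\ge S^*JS$. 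This simultaneously establishes (i), identifies $S^*JS$ as the minimum element of the solution set $\mathcal{A}$, and produces the operator-interval description in (ii). The boundedness of $S=|A_{11}|^{[-1/2]}A_{12}\in [\sH_2,\sH_1]$ under \eqref{E:1} is Douglas' lemma.

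\textbf{Main obstacle.} Nothing truly analytical arises once the spectral splitting is in place; the substantive care is bookkeeping around $\ker |A_{11}|$. One must check that $|A_{11}|^{[-1/2]}$ restricts to $(A_{11}^+)^{[-1/2]}$ on $\sH_1^+$ and to $(A_{11}^-)^{-1/2}$ on $\sH_1^-$, so that the identity $S^*JS=(S^+)^*S^+-(S^-)^*S^-$ meshes precisely with the Shmul'yan output. Verifying this compatibility of the Moore--Penrose inverse with the spectral decomposition of $A_{11}$, and confirming that the Frobenius congruence by a bounded (but not necessarily invertible) operator preserves the equality $\nu_-(A)=\k+\nu_-(\widehat A)$ in the operator setting, are the points requiring the most attention.
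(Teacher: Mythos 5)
Your argument is correct, but it takes a genuinely different route from the paper. You split off the finite-dimensional negative spectral subspace $\sH_1^-$ of $A_{11}$, pivot on the boundedly invertible block $-A_{11}^-$ via a Frobenius--Schur congruence (the congruence operator is block triangular with identity diagonal, hence boundedly invertible, so the additivity $\nu_-(A)=\kappa+\nu_-(\widehat A)$ is the standard operator Sylvester law --- your worry about non-invertible congruences is moot here), and thereby reduce everything to the classical $\kappa=0$ Shmul'yan theorem; the bookkeeping identities $\ran A_{12}\subset\ran|A_{11}|^{1/2}\Leftrightarrow\ran A_{12}^+\subset\ran (A_{11}^+)^{1/2}$ and $S^*JS=(S^+)^*S^+-(A_{12}^-)^*(A_{11}^-)^{-1}A_{12}^-$ check out. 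The paper instead regularizes: it pivots on $A_{11}+\varepsilon$ for $0<\varepsilon<|\lambda_1|$, reduces the condition $\nu_-(A)=\kappa$ to the inequality $A_{21}(A_{11}+\varepsilon)^{-1}A_{12}\le A_{22}+\varepsilon$, extracts the range inclusion by monotone convergence as $\varepsilon\searrow 0$, and identifies the minimal solution from the strong limit $A_{21}(A_{11}+\varepsilon)^{-1}A_{12}\to S^*JS$. Your reduction is structurally cleaner and makes transparent exactly where finiteness of $\kappa$ enters (invertibility of $A_{11}^-$ on a finite-dimensional space), at the cost of quoting the $\kappa=0$ case as a black box; the paper's argument is self-contained (it proves the Shmul'yan case along the way) and, more importantly, its $\varepsilon$-regularization is the template that survives in Theorem \ref{T:1ext} for $\kappa=\infty$, where no finite-dimensional pivot exists and a spectral gap replaces finiteness. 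One further small remark: for the interval description you rely on Shmul'yan's parametrization of the $(2,2)$-entries; the paper instead verifies directly that $A_Y=A_{\min}+\diag(0,Y)\ge A_{\min}$ keeps $\nu_-=\kappa$ because the negative eigenspace of $A_{11}$ stays $A_Y$-negative --- in your setup this is automatic from $\nu_-(A_Y)=\kappa+\nu_-(\widehat A_Y)$, so nothing is missing.
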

\begin{proof}
(i) Assume that there exists a completion $A_{22}\in\mathcal{A}$.
Let $\l_\k\leq\l_{\k-1}\leq...\leq\l_1<0$ be all the negative
eigenvalues of $A_{11}$ and let $\e$ be such that $|\l_1|>\e>0$.
Then $0\in\r(A_{11}+\e)$ and hence one can write
 \begin{equation}\label{E:Silv}
 \begin{split}
  &\begin{pmatrix}
   I&0\\
   -A_{21}(A_{11}+\e)^{-1}&I
  \end{pmatrix}
  \begin{pmatrix}
   A_{11}+\e&A_{12}\\
   A_{21}&A_{22}+\e
  \end{pmatrix}
  \begin{pmatrix}
   I&-(A_{11}+\e)^{-1}A_{12}\\
   0&I
  \end{pmatrix}\\=
  &\begin{pmatrix}
   A_{11}+\e&0\\
   0&A_{22}+\e-A_{21}(A_{11}+\e)^{-1}A_{12}
  \end{pmatrix}
 \end{split}
 \end{equation}
The operator in the righthand side of \eqref{E:Silv} has $\k$
negative eigenvalues if and only if
\begin{equation}\label{E:2}
  A_{21}(A_{11}+\e)^{-1}A_{12}\leq A_{22}+\e
\end{equation}
or equivalently
 \begin{equation}\label{E:3}
  \int\limits_{-\|A_{11}\|}^{\|A_{11}\|}(t+\e)^{-1}d\|E_tA_{12}f\|^2\leq\e\|f\|^2+(A_{22}f,f),
 \end{equation}
where $E_t$ is the spectral family of $A_{11}$. We rewrite
\eqref{E:3} in the form
\[
  \int_{[-\|A_{11}\|,0)} (t+\e)^{-1}d\|E_tA_{12}f\|^2+
  \int_{[0,\|A_{11}\|]}(t+\e)^{-1}d\|E_tA_{12}f\|^2\leq\e\|f\|^2+(A_{22}f,f),
\]
This yields the estimate
\begin{equation}\label{E:4}
  \int_{[0,\|A_{11}\|]} (t+\e)^{-1}d\|E_tA_{12}f\|^2\leq\e\|f\|^2+(A_{22}f,f)-\frac{1}{\l_1+\e}\|A_{12}f\|^2.
\end{equation}

By letting $\e\searrow 0$ in \eqref{E:4} the monotone convergence
theorem implies that
\[
 P_+A_{12}f\in\ran A_{11+}^{1/2}\subset \ran |A_{11}|^{1/2}
\]
for all $f\in\sH_2$; here $A_{11+}=\int_{[0,\|A_{11}\|]} t\, dE_t$
stands for the nonnegative part of $A_{11}$ and $P_+$ is the
orthogonal projection onto the corresponding closed subspace $\cran
A_{11+}=\int_{[0,\|A_{11}\|]} \, dE_t$. This implies that $\ran
A_{12}\subset\ran |A_{11}|^{1/2}$.

Conversely, if $\ran A_{12}\subset\ran |A_{11}|^{1/2}$, then the
operator $S:=|A_{11}|^{[-1/2]}A_{12}$ is well defined, closed and
bounded, i.e., $S\in [\sH_2,\sH_1]$. Since $A_{12}=|A_{11}|^{1/2}S$,
it follows from $A_{21}=S^*|A_{11}|^{1/2}$ and
\begin{equation}\label{Amin}
 A=
 \begin{pmatrix}
  |A_{11}|^{1/2}\\
  S^*J
 \end{pmatrix}J
 \begin{pmatrix}
  |A_{11}|^{1/2}&JS\\
 \end{pmatrix}:\ \nu_-(A)=\k,
\end{equation}
that the operator $A_{22}=S^*JS$ gives a completion for $A^0$.

(ii) According to (i) $A_{21}=S^*|A_{11}|^{1/2}$, and
$S^*JS\in[\sH_2]$ gives a solution to the completion problem
\eqref{A0}. Now
$$
s-\lim_{\e\searrow 0}A_{21}(A_{11}+\e)^{-1}A_{12}=s-\lim_{\e\searrow
0}S^*|A_{11}|^{1/2}(A_{11}+\e)^{-1}|A_{11}|^{1/2}S=S^*JS
$$
and if $A_{22}$ is an arbitrary operator in the set \eqref{E:sol},
then by letting $\e\searrow 0$ one concludes that $S^*JS\leq
A_{22}$. Therefore, $S^*JS$ satisfies the desired minimality
property.

To prove the last statement assume that $Y\in[\sH_2]$ and that $Y\ge
0$. Then $A_{22}=S^*JS+Y$ inserted in $A^0$ defines a block operator
$A_Y\ge A_{\rm{min}}$. In particular, $\nu_-(A_Y)\le
\nu_-(A_{\rm{min}})=\kappa<\infty$. On the other hand, it is clear
from the formula
\begin{equation}\label{AY}
  A_Y=
 \begin{pmatrix}
  |A_{11}|^{1/2}\\
  S^*J
 \end{pmatrix}J
 \begin{pmatrix}
  |A_{11}|^{1/2}&JS\\
 \end{pmatrix}
 +
 \begin{pmatrix}
  0 & 0 \\ 0 & Y \\
 \end{pmatrix}
\end{equation}
that the $\kappa$-dimensional eigenspace corresponding to the
negative eigenvalues of $A_{11}$ is $A_Y$-negative and, hence,
$\nu_-(A_Y)\ge \kappa$. Therefore, $\nu_-(A_Y)=\kappa$ and
$Y\in\mathcal{A}$.
\end{proof}

Notice that in the factorization $A_{12}=|A_{11}|^{1/2}S$,
$S$ is uniquely determined under the condition
$\ran S\subset \cran A_{11}$ (which implies that $\ker A_{12}=\ker S$);
cf. \cite{Douglas}.

In the case that $\kappa=0$, the result in Theorem~\ref{T:1} reduces
to the well-known criterion concerning completion of an incomplete
block operator to a nonnegative operator; cf. \cite{S59}. In the
case of matrices acting on a finite dimensional Hilbert space, the
result with $\kappa>0$ has been proved very recently in the appendix
of \cite{DHS2012}, where it was applied in solving indefinite
truncated moment problems. In the present paper Theorem~\ref{T:1}
will be one of the main tools for further investigations.

\subsection{Completion to operator blocks with an infinite negative
index.}

The completion result in Theorem~\ref{T:1} is of some general
interest already by the substantial number of its applications known
in the case of nonnegative operators. In this section the completion
problem is treated in the case that $\kappa=\infty$. For this
purpose some further notions will be introduced.

Recall that a subspace $\sM\subset \sH$ is said to be uniformly
$A$-negative, if there exists a positive constant $\nu>0$ such that
$(Af,f)\le -\nu \|f\|^2$ for all $f\in\sM$. It is maximal uniformly
$A$-negative, if $\sM$ has no proper uniformly $A$-negative
extension. The completion problem is now extended by claiming from
the completions the following maximality property:
\begin{equation}\label{Max}
 \text{there exists a subspace $\sM\subset \sH_1$ which is maximal uniformly $A$-negative}.
\end{equation}

\begin{theorem}\label{T:1ext}
Let $A^0$ be an incomplete block operator of the form \eqref{A0} in
the Hilbert space $\sH=\sH_1\oplus\sH_2$. Let $A_{11}=A_{11}^*$ and
$A_{21}=A_{12}^*$ be bounded, let $J=\sign(A_{11})$ be the (unitary)
signature operator of $A_{11}$, and, in addition, assume that there
is a spectral gap $(-\delta,0)\subset \rho(A_{11})$, $\delta>0$.
Then:
\begin{enumerate}\def\labelenumi{\rm(\roman{enumi})}
\item
 There exists a completion $A\in[\sH]$ of $A^0$ with some
 operator $A_{22}=A_{22}^*$ satisfying the condition \eqref{Max}
 if and only if
\[
  \ran A_{12}\subset\ran |A_{11}|^{1/2}
\]

\item
 In this case the operator $S=|A_{11}|^{[-1/2]}A_{12}$, where
 $|A_{11}|^{[-1/2]}$ denotes the (generalized) Moore-Penrose
 inverse of $|A_{11}|^{1/2}$, is well defined and
 $S\in[\sH_2,\sH_1]$. Moreover, $S^*JS$ is the smallest operator
 in the solution set
\[
  \mathcal{A}:=\sk\{{A_{22}=A_{22}^*\in[\sH_2]:\, A=(A_{ij})_{i,j=1}^{2} \textrm{ satisfies } \eqref{Max}
  }\}
\]
 and this solution set admits a description as the (semibounded)
  operator interval given by
\[
  \mathcal{A}=\sk\{{A_{22}\in[\sH_2]:\, A_{22}=S^*JS+Y,\, Y=Y^*\ge 0}\}.
\]
\end{enumerate}
\end{theorem}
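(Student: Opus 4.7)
The plan is to parallel the proof of Theorem \ref{T:1}, with the spectral gap $(-\delta,0)\subset\rho(A_{11})$ replacing the finiteness of $\k$ and with the maximality condition \eqref{Max} translated, via a Schur complement reduction, into nonnegativity of the complement.

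For the necessity in (i), I would repeat the congruence \eqref{E:Silv} but with $\e\in(0,\delta)$; the spectral gap ensures $0\in\rho(A_{11}+\e)$ throughout, so the factorization
\[
 A+\e I \;\cong\; \diag(A_{11}+\e,\,B_\e),\qquad
 B_\e:=A_{22}+\e-A_{21}(A_{11}+\e)^{-1}A_{12},
\]
is valid, implemented by a bounded boundedly invertible triangular operator $L_\e$ that acts as the identity on the $\sH_1$-block and preserves the $\sH_2$-component; consequently $L_\e^{\pm 1}$ fix $\sH_1$ pointwise and map subspaces meeting $\sH_2$ nontrivially to subspaces meeting $\sH_2$ nontrivially, so the property ``maximal uniformly negative subspace contained in $\sH_1$'' is transferred from $A+\e I$ to its diagonal form. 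The central new combinatorial step is that for a block-diagonal selfadjoint operator $\diag(C_1,C_2)$, existence of a maximal uniformly negative subspace contained in $\sH_1$ forces $C_2\ge 0$: any $g\in\sH_2$ with $(C_2g,g)<0$ would enlarge such a subspace strictly outside $\sH_1$, contradicting maximality. Applied to $\diag(A_{11}+\e, B_\e)$ this gives $B_\e\ge 0$, i.e., exactly \eqref{E:2}, and the subsequent monotone convergence argument in \eqref{E:3}--\eqref{E:4} then applies verbatim and yields $\ran A_{12}\subset\ran|A_{11}|^{1/2}$.

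For sufficiency and statement (ii), I set $S=|A_{11}|^{[-1/2]}A_{12}$ and take the candidate minimal completion $A_{\rm{min}}$ with $A_{22}=S^*JS$, as in \eqref{Amin}. Spectral calculus with the gap shows $J-|A_{11}|^{1/2}(A_{11}+\e)^{-1}|A_{11}|^{1/2}\ge 0$ for every $\e\in(0,\delta)$, so $B_\e\ge \e I$; together with invertibility of $A_{11}+\e$ and the congruence this shows $A_{\rm{min}}+\e I$ is boundedly invertible, hence $(-\delta,0)\subset\rho(A_{\rm{min}})$. The negative spectral subspace $\sH_{1-}=\ran E_{A_{11}}(-\infty,-\delta]$ of $A_{11}$, regarded inside $\sH_1\subset\sH$, is uniformly $A_{\rm{min}}$-negative with constant $\delta$ (its elements have zero $\sH_2$-component, so $(A_{\rm{min}}f,f)=(A_{11}f_1,f_1)\le -\delta\|f\|^2$), and $\dim\sH_{1-}=\nu_-(A_{11})=\nu_-(A_{\rm{min}})$ under the gap; the standard bound $\dim\sM\le\nu_-(A_{\rm{min}})$ valid for uniformly $A_{\rm{min}}$-negative $\sM\subset\sH$ then makes $\sH_{1-}$ maximal, establishing \eqref{Max} for $A_{\rm{min}}$. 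For arbitrary $Y\ge 0$, formula \eqref{AY} shows that $\sH_{1-}$ remains uniformly $A_Y$-negative with the same constant, and $A_Y\ge A_{\rm{min}}$ combined with antitonicity gives $\nu_-(A_Y)\le\nu_-(A_{\rm{min}})=\dim\sH_{1-}$, so maximality is preserved; conversely, any $A_{22}\in\mathcal A$ satisfies $S^*JS\le A_{22}$ by letting $\e\searrow 0$ in $A_{22}+\e\ge A_{21}(A_{11}+\e)^{-1}A_{12}$, exactly as in Theorem \ref{T:1}(ii).

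The main obstacle, where the infinite-$\nu_-$ case genuinely differs from Theorem \ref{T:1}, is the auxiliary statement on block-diagonal operators invoked in the necessity step and the identification $\nu_-(A_{\rm{min}})=\nu_-(A_{11})$; both must be phrased to cover possibly infinite-dimensional negative spectral subspaces, the key point being that $L_\e^{\pm 1}$ act as the identity on $\sH_1$, so that ``maximality within $\sH_1$'' transports cleanly between $A+\e I$ and its diagonalization.
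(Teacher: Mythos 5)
Your necessity argument is essentially the paper's and is sound: you work with the congruence \eqref{E:Silv} for $\e\in(0,\delta)$, use that the triangular factor acts as the identity on $\sH_1$ so that maximality of $\sM\subset\sH_1$ transfers to the diagonalized operator, and observe that a vector $v_0\in\sH_2$ with $(B_\e v_0,v_0)<0$ would yield, by orthogonality of the blocks, a proper uniformly negative one-dimensional extension of $\sM$; this gives \eqref{E:2} and then the range inclusion by the monotone convergence step of Theorem \ref{T:1}. (One small omission: you must also take $\e<\nu$, the uniform negativity constant of $\sM$, so that $\sM$ remains uniformly $(A+\e I)$-negative and hence maximal such; the paper restricts $\e$ accordingly.)

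The genuine gap is in the sufficiency direction and in part (ii), precisely at the point you label \emph{the main obstacle} and then leave unresolved. You conclude that $\sH_{1-}=\ran E_{A_{11}}((-\infty,-\delta])$ is maximal uniformly $A_{\rm{min}}$-negative from the count $\dim\sM\le\nu_-(A_{\rm{min}})=\dim\sH_{1-}$. This proves nothing when $\nu_-(A_{11})$ is infinite: a uniformly negative subspace can be properly contained in another of the same infinite cardinality, so equality of dimensions does not imply maximality --- and the infinite case is the only one not already covered by Theorem \ref{T:1}. The same defect affects your treatment of $A_Y$, where you again invoke $\nu_-(A_Y)\le\nu_-(A_{\rm{min}})$. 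The paper's proof is dimension-free: writing $A_{\rm{min}}=R^*JR$ with $R=\begin{pmatrix}|A_{11}|^{1/2} & JS\end{pmatrix}$ as in \eqref{Amin}, the operator $R$ maps every uniformly $A_{\rm{min}}$-negative subspace of $\sH$ injectively onto a uniformly $J$-negative subspace of $\sH_1$; the subspace $\sH_{1-}$ is mapped onto itself by $R$ (the gap makes $|A_{11}|^{1/2}$ boundedly invertible there) and is maximal uniformly $J$-negative in $\sH_1$, being the $-1$ eigenspace of $J$, so any proper uniformly $A_{\rm{min}}$-negative extension of it would produce a proper uniformly $J$-negative extension --- a contradiction. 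For $Y\ge 0$ one then uses only that $A_Y\ge A_{\rm{min}}$ forces every uniformly $A_Y$-negative subspace to be uniformly $A_{\rm{min}}$-negative, so maximality transfers without any inertia count. Your remaining ingredients (the estimate $B_\e\ge\e I$ and the minimality of $S^*JS$ obtained by letting $\e\searrow 0$) are correct.
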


\begin{proof}
To prove this result suitable modifications in the proof of
Theorem~\ref{T:1} are needed.

(i) First assume that $A_{22}\in\mathcal{A}$ gives a desired
completion for $A^0$. If $\e\in(0,\delta)$ then $0\in\r(A_{11}+\e)$
and therefore the block operator $(A_{ij})$ satisfies the formula
\eqref{E:Silv}. We claim that the condition \eqref{Max} implies the
inequality \eqref{E:2} for all sufficiently small values $\e>0$. To
see this let $\sM\subset \sH_1$ be a subspace for which the
condition \eqref{Max} is satisfied. Then $(A_{11}f,f)\le -\nu
\|f\|^2$ for some fixed $\nu>0$ and for all $f\in\sM$. Assume that
for some $0<\e_0<\max\{\nu,\delta\}$ \eqref{E:2} is not satisfied.
Then $((A_{22}+\e_0-A_{21}(A_{11}+\e_0)^{-1}A_{12})v_0,v_0)<0$ holds
for some vector $v_0\in\sH_2$.  Define
$\sL=W_{\e_0}^{-1}(\sM+\spn\{v_0\})$, where
\[
W_{\e_0}=  \begin{pmatrix}
   I&-(A_{11}+\e_0)^{-1}A_{12}\\
   0&I
  \end{pmatrix}.
\]
Clearly, $W_{\e_0}$ is bounded with bounded inverse and it maps
$\sM$ bijectively onto $\sM$, so that $\sL$ is a $1$-dimensional
extension of $\sM$. It follows from \eqref{E:Silv} that for all
$f\in\sL$,
\[
 (Af,f)+\e_0 \|f\|^2 = \left(\begin{pmatrix}
   A_{11}+\e_0 &0\\
   0&A_{22}+\e_0 -A_{21}(A_{11}+\e_0)^{-1}A_{12}
  \end{pmatrix} u,u \right)<0,
\]
where $u=W_{\e_0}f \in \sM+\spn\{v_0\}$. Therefore, $\sL$ is a
proper uniformly $A$-negative extension of $\sM$; a contradiction,
which shows that \eqref{E:2} holds for all
$0<\e<\max\{\nu,\delta\}$. Then, as in the proof of
Theorem~\ref{T:1} it is seen that $\ran A_{12}\subset \ran
|A_{11}|^{1/2}$; note that in the estimate \eqref{E:4} $\lambda_1$
is to be replaced by $-\delta$.

Conversely, if $\ran A_{12}\subset\ran |A_{11}|^{1/2}$, then
$S=|A_{11}|^{[-1/2]}A_{12}\in [\sH_2,\sH_1]$ and the block operator
$A$ in \eqref{Amin} gives a completion. To prove that $A$ satisfies
\eqref{Max} observe that if $\sM$ is a uniformly $A$-negative
subspace in $\sH$, then $\begin{pmatrix}
  |A_{11}|^{1/2}&JS\\
\end{pmatrix}$
maps it bijectively onto a uniformly $J$-negative
subspace in $\sH_1$. The spectral subspace corresponding to the
negative spectrum of $A_{11}$ is maximal uniformly $J$-negative in
$\sH_1$ and also uniformly $A$-negative in $\sH$. By the above
mapping property this subspace must be maximal uniformly
$A$-negative in $\sH$.

(ii) If $A_{22}=A_{22}^*$ defines a completion  $A\in[\sH]$ of $A^0$
such that \eqref{Max} is satisfied then by the proof of (i) the
inequality \eqref{E:2} holds for all sufficiently small values
$\e>0$. Now the minimality property of $S^*JS$ can be obtained in
the same manner as in Theorem~\ref{T:1}.

As to the last statement again for every $Y\in[\sH_2]$, $Y\ge 0$,
the block operator $A_Y$ defined in the proof of Theorem~\ref{T:1}
satisfies $A_Y\ge A_{\rm{min}}$. Hence, every uniformly
$A_Y$-negative subspace is also uniformly $A_{\rm{min}}$-negative.
Now it follows from the formula \eqref{AY} that the spectral
subspace corresponding to the negative spectrum of $A_{11}$, which
is maximal uniformly $A_{\rm{min}}$-negative, is also maximal
uniformly $A_Y$-negative. Hence, $A_Y$ satisfies \eqref{Max} and
$Y\in\mathcal{A}$.
\end{proof}

\section{Some factorizations of operators with finite negative index}
\label{sec2}

Theorems~\ref{T:1} and~\ref{T:1ext} contain a valuable tool
in solving a couple of other problems, which initially do not occur
as a completion problem of some symmetric incomplete block operator.
In this section it is shown that Theorem~\ref{T:1}
(a) can be used to characterize existence of certain $J$-contractive factorization
of operators via a minimal index condition;
(b) implies an extension of the well-known Douglas factorization result
with a certain specification to the Bogn\'ar-Kr\'amli factorization;
(c) yields an extension of a factorization result of Shmul'yan for $J$-bicontractions;
(d) allows an extension of a classical Sylvester law of inertia of a block operator,
which is originally used in characterizing nonnegativity of a bounded block operator
via Schur complement.

Some simple inertia formulas are now recalled.
The factorization $H=B^*EB$ clearly implies that $\nu_\pm(H)\le \nu_\pm(E)$.
If $H_1$ and $H_2$ are selfadjoint operators, then
\[
 H_1+H_2=\begin{pmatrix} I \\ I \end{pmatrix}^* \begin{pmatrix} H_1 & 0 \\ 0 & H_2 \end{pmatrix}
 \begin{pmatrix} I \\ I \end{pmatrix}
\]
shows that $\nu_\pm(H_1+H_2)\le \nu_\pm(H_1)+\nu_\pm(H_2)$. Consider
the selfadjoint block operator $H\in[\sH_1\oplus\sH_2]$ of the form
\begin{equation}\label{H}
 H=\begin{pmatrix} A & B^* \\ B & J_2 \end{pmatrix},
\end{equation}
where $J_2=J_2^*=J_2^{-1}$. By applying the above mentioned
inequalities shows that
\begin{equation}\label{minneg}
 \nu_\pm(A)\le \nu_\pm(A-B^*J_2B)+\nu_\pm(J_2).
\end{equation}
Assuming that $\nu_-(A-B^*J_2B)$ and $\nu_-(J_2)$ are finite,
the question when $\nu_-(A)$ attains its maximum in \eqref{minneg}, or equivalently,
$\nu_-(A-B^*J_2B)\ge \nu_-(A)-\nu_-(J_2)$ attains its minimum,
turns out to be of particular interest.
The next result characterizes this situation as
an application of Theorem~\ref{T:1}. Recall that if $A=J_A |A|$ is
the polar decomposition of $A$, then one can interpret $\sH_A=(\cran
A,J_A)$ as a Kre\u{\i}n space generated on $\cran A$ by the
fundamental symmetry $J_A=\sgn(A)$.

\begin{theorem}\label{thmB}
Let $A\in[\sH_1]$ be selfadjoint, $B\in[\sH_1,\sH_2]$, $J_2=J_2^*=J_2^{-1}\in[\sH_2]$,
and assume that $\nu_-(A),\nu_-(J_2)<\infty$. If the equality
\begin{equation}\label{min}
 \nu_-(A) = \nu_-(A-B^*J_2B)+\nu_-(J_2)
\end{equation}
holds, then $\ran B^*\subset \ran |A|^{1/2}$ and $B^*=|A|^{1/2}K$ for a unique operator
$K\in[\sH_2,\sH_A]$ which is $J$-contractive: $J_2-K^*J_A K\ge 0$.

Conversely, if $B^*=|A|^{1/2}K$ for some $J$-contractive operator
$K\in[\sH_2,\cran A]$, then the equality \eqref{min} is satisfied.
\end{theorem}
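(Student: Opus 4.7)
The plan is to reduce the theorem to Theorem \ref{T:1} by viewing the block operator $H$ in \eqref{H} as a completion of the incomplete block operator with entries $A_{11}=A$ and $A_{12}=B^*$, where the $(2,2)$-entry is fixed to be $A_{22}=J_2$. First I would establish, via the Schur--Frobenius congruence
\[
 \begin{pmatrix} I & -B^*J_2 \\ 0 & I \end{pmatrix}
 H
 \begin{pmatrix} I & 0 \\ -J_2 B & I \end{pmatrix}
 = \begin{pmatrix} A-B^*J_2 B & 0 \\ 0 & J_2 \end{pmatrix}
\]
(using $J_2^{-1}=J_2$), the inertia identity $\nu_-(H)=\nu_-(A-B^*J_2 B)+\nu_-(J_2)$. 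Since $\nu_-(H)\ge \nu_-(A)$ always (any $A$-negative subspace of $\sH_1$ embeds as an $H$-negative subspace of $\sH_1\oplus\sH_2$ via $f\mapsto (f,0)$), the equality \eqref{min} is therefore equivalent to the single equality $\nu_-(H)=\nu_-(A)$.

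For the forward implication, I would assume \eqref{min} and hence $\nu_-(H)=\nu_-(A)=:\kappa<\infty$. Then $A_{22}=J_2$ is an admissible completion in the sense of Theorem \ref{T:1}(i), which immediately yields $\ran B^*\subset \ran |A|^{1/2}$. Defining $K:=|A|^{[-1/2]}B^*$ as in Theorem \ref{T:1}(ii) gives $K\in[\sH_2,\sH_A]$ with $\ran K\subset \cran|A|=\cran A$ and $B^*=|A|^{1/2}K$; uniqueness of $K$ in $[\sH_2,\sH_A]$ follows since $|A|^{1/2}$ is injective on $\cran A$. The minimality assertion of Theorem \ref{T:1}(ii) says $K^*J_A K\le J_2$, which is exactly the $J$-contractivity $J_2-K^*J_A K\ge 0$.

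For the converse, I would suppose $B^*=|A|^{1/2}K$ with $K\in[\sH_2,\cran A]$ and $Y:=J_2-K^*J_AK\ge 0$. Then $H$ takes the form of \eqref{AY} with $A_{11}=A$, $J=J_A$, $S=K$, namely
\[
 H=\begin{pmatrix} |A|^{1/2} \\ K^*J_A \end{pmatrix} J_A \begin{pmatrix} |A|^{1/2} & J_A K \end{pmatrix} + \begin{pmatrix} 0 & 0 \\ 0 & Y \end{pmatrix}.
\]
Theorem \ref{T:1}(ii) then gives $\nu_-(H)=\nu_-(A)$, and combining with the congruence identity displayed above produces \eqref{min}.

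The main subtlety will be the bookkeeping of the signature operator and the Moore--Penrose convention: one must confirm that $J_A=\sgn A$ plays exactly the role of $J=\sgn(A_{11})$ in Theorem \ref{T:1}, that $|A|^{[-1/2]}$ places $\ran K$ inside $\cran A$ so that $K$ lands naturally in the Kre\u{\i}n space $\sH_A$, and that the injectivity of $|A|^{1/2}\!\upharpoonright\!\cran A$ is what guarantees uniqueness of the factor $K$. Once these identifications are made, the argument is a direct appeal to parts (i) and (ii) of Theorem \ref{T:1} combined with the Schur congruence formula for $H$.
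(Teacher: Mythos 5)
Your proposal is correct and follows essentially the same route as the paper: both reduce \eqref{min} to the single identity $\nu_-(H)=\nu_-(A)$ via the Schur--Frobenius congruence of $H$ with $\diag(A-B^*J_2B,\,J_2)$, and then invoke parts (i) and (ii) of Theorem \ref{T:1} (with $A_{11}=A$, $A_{12}=B^*$, $A_{22}=J_2$) to obtain the range inclusion, the factor $K=|A|^{[-1/2]}B^*$, and its $J$-contractivity from the minimality of $K^*J_AK$, with the converse handled by recognizing $J_2=K^*J_AK+Y$, $Y\ge 0$, as a member of the solution set. The only differences are cosmetic (congruence form versus product form of the factorization, plus the harmless extra observation that $\nu_-(H)\ge\nu_-(A)$ always holds).
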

\begin{proof}
Assume that \eqref{min} is satisfied. The factorization
\[
 H=\begin{pmatrix} A & B^* \\ B & J_2 \end{pmatrix}
 = \begin{pmatrix} I & B^*J_2\\ 0 & I \end{pmatrix}
  \begin{pmatrix} A-B^* J_2B& 0 \\ 0 & J_2 \end{pmatrix}
   \begin{pmatrix} I  & 0 \\ J_2 B & I \end{pmatrix}
\]
shows that $\nu_-(H)=\nu_-(A-B^* J_2B)+\nu_-(J_2)$, which combined
with the equality \eqref{min} gives $\nu_-(H)=\nu_-(A)$. Therefore,
by Theorem~\ref{T:1} one has $\ran B^*\subset \ran |A|^{1/2}$ and
this is equivalent to the existence of a unique operator $K\in
[\sH_2,\cdom A]$ such that $B^*=|A|^{1/2}K$; i.e.
$K=|A|^{[-1/2]}B^*$. Furthermore, $K^*J_{A}K\leq J_2$ by the
minimality property of $K^*J_{A}K$ in Theorem~\ref{T:1}, in other
words $K$ is a $J$-contraction.

Converse, if $B^*=|A|^{1/2}K$ for some $J$-contraction $K\in
[\sH_2,\cdom A]$, then clearly $\ran B^*\subset\ran |A|^{1/2}$. By
Theorem~\ref{T:1} the completion problem for $H^{0}$ has solutions
with the minimal solution $S^*J_{A}S$, where
$S=|A|^{[-1/2]}B^*=|A|^{[-1/2]}|A|^{1/2}K=K$. Furthermore, by
$J$-contractivity of $K$ one has $K^*J_{A}K\le J_2$, i.e. $J_2$ is
also a solution and thus $\nu_-(H)=\nu_-(A)$ or, equivalently, the
equality \eqref{min} is satisfied.
\end{proof}

While Theorem~\ref{thmB} is obtained as a direct consequence of
Theorem \ref{T:1} it will be shown in the next section that this
result yields simple solutions to a wide class of lifting problems
for contractions in Hilbert, Pontryagin and Kre\u{\i}n space
settings.

Before deriving the next result some inertia formulas for
a class of selfadjoint block operators are recalled.
Consider the following two representations
\begin{equation*}
\begin{split}
\begin{pmatrix}
 J_1& T^*\\
 T & J_2
\end{pmatrix}&=
\begin{pmatrix}
 I&0\\
 TJ_1&I
\end{pmatrix}
\begin{pmatrix}
 J_1&0\\
 0&J_2-TJ_1T^*
\end{pmatrix}
\begin{pmatrix}
 I&J_1T^*\\
 0&I
\end{pmatrix}\\
&=\begin{pmatrix}
 I&T^*J_2\\
 0&I
\end{pmatrix}
\begin{pmatrix}
 J_1-T^*J_2 T&0\\
 0& J_2
\end{pmatrix}
\begin{pmatrix}
 I&0\\
 J_2T&I
\end{pmatrix},
\end{split}
\end{equation*}
where $J_i=J_i^*=J_i^{-1}$, $i=1,2$. Since here the triangular
operators are bounded with bounded inverse, one concludes that $\ran
(J_2-TJ_1T^*)$ is closed if and only if $\ran(J_1-T^*J_2 T)$ is
closed. Furthermore, one gets the following inertia formulas; cf.
e.g. \cite[Proposition~3.1]{ACG87}.

\begin{lemma}\label{inertia} With the above notations one has
\[
 \nu_\pm(J_1-T^*J_2T)+\nu_\pm(J_2)=\nu_\pm(J_2-TJ_1T^*)+\nu_\pm(J_1),
\]
\[
 \nu_0(J_1-T^*J_2T)=\nu_0(J_2-TJ_1T^*).
\]
\end{lemma}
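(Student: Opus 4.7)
The plan is to exploit directly the two triangular factorizations of the block operator $H=\begin{pmatrix} J_1 & T^* \\ T & J_2 \end{pmatrix}$ displayed just above the statement of the lemma. Both factorizations have the form $H=W^*DW$ with $W$ a unit-triangular block operator (hence bounded with bounded inverse) and $D$ block-diagonal. Explicitly, $H$ is congruent via bounded invertible congruences to
\[
 D_1=\begin{pmatrix} J_1 & 0 \\ 0 & J_2-TJ_1T^* \end{pmatrix}
 \quad\text{and}\quad
 D_2=\begin{pmatrix} J_1-T^*J_2T & 0 \\ 0 & J_2 \end{pmatrix}.
\]

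Next I would invoke the Hilbert-space analogue of Sylvester's law of inertia: if $W$ is bounded with bounded inverse and $D=D^*$ is bounded, then $\nu_\pm(W^*DW)=\nu_\pm(D)$ and $\nu_0(W^*DW)=\nu_0(D)$. The justification is short and standard: $W$ sends any maximal positive (respectively negative) subspace for $D$ bijectively onto a maximal positive (respectively negative) subspace for $W^*DW$, while $\ker(W^*DW)=W^{-1}(\ker D)$. Combined with the obvious additivity of $\nu_\pm$ and $\nu_0$ across a block-diagonal decomposition, applying this to the two factorizations yields the two chains
\[
 \nu_\pm(H)=\nu_\pm(J_1)+\nu_\pm(J_2-TJ_1T^*)=\nu_\pm(J_1-T^*J_2T)+\nu_\pm(J_2),
\]
\[
 \nu_0(H)=\nu_0(J_1)+\nu_0(J_2-TJ_1T^*)=\nu_0(J_1-T^*J_2T)+\nu_0(J_2).
\]

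The equality of the two expressions for $\nu_\pm(H)$ is exactly the first identity of the lemma. For the $\nu_0$ identity, unitarity of the signature operators $J_1$ and $J_2$ gives $\ker J_i=\{0\}$, whence $\nu_0(J_1)=\nu_0(J_2)=0$ and the chain for $\nu_0(H)$ collapses to the stated formula $\nu_0(J_1-T^*J_2T)=\nu_0(J_2-TJ_1T^*)$. I do not anticipate any serious obstacle: the only step meriting a line of justification is the invariance of $\nu_\pm$ and $\nu_0$ under bounded invertible congruence, which is a routine dimension-counting argument on maximal semidefinite subspaces and on kernels.
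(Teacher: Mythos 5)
Your proof is correct and follows essentially the same route as the paper: the paper derives the lemma precisely from the two unit-triangular congruences of $\begin{pmatrix} J_1 & T^* \\ T & J_2 \end{pmatrix}$ displayed above the statement, together with invariance of inertia under bounded invertible congruence (citing \cite[Proposition~3.1]{ACG87}). Your write-up merely makes explicit the Sylvester-type invariance argument and the observation that $\nu_0(J_i)=0$, both of which are sound.
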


The next result contains two general factorization results:
assertion (i) contains an extension of the well-known Douglas
factorization, see \cite{Douglas,FW}, and assertion (ii) is a
specification of the so-called Bogn\'ar-Kr\'amli factorization, see
\cite{Bognarbook}: $A=B^*J_2B$ holds for some bounded operator $B$
if and only if $\nu_\pm(J_2)\ge \nu_\pm(A)$.

\begin{proposition}\label{BKcor1}
Let $A$, $B$, and $J_2$ be as in Theorem~\ref{thmB}, and assume that
$\nu_-(A)=\nu_-(J_2)<\infty$. Then:
\begin{enumerate}\def\labelenumi{\rm(\roman{enumi})}
\item The inequality
\begin{equation}\label{min2}
 A\ge B^*J_2 B
\end{equation}
holds if and only if $B=C|A|^{1/2}$ for some $J$-contractive operator $C\in[\sH_A,\sH_2]$;
in this case $C$ is unique and, in addition, $J$-bicontractive, i.e.,
$J_A-C^*J_2 C\ge 0$ and $J_2-CJ_A C^*\ge 0$.

\item The equality
\begin{equation}\label{min3}
 A = B^*J_2 B
\end{equation}
holds if and only if $B=C|A|^{1/2}$ for some $J$-isometric operator $C\in[\sH_A,\sH_2]$;
again $C$ is unique. In addition, $C$ is unitary if and only if $\ran B$ is dense in $\sH_2$.
\end{enumerate}
\end{proposition}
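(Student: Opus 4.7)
The plan is to reduce both parts to Theorem~\ref{thmB} together with the inertia identity of Lemma~\ref{inertia}. For the forward direction of (i), first observe that $A\ge B^*J_2B$ gives $\nu_-(A-B^*J_2B)=0$, so combined with the hypothesis $\nu_-(A)=\nu_-(J_2)$ and the general inequality~\eqref{minneg}, the minimality equality $\nu_-(A)=\nu_-(A-B^*J_2B)+\nu_-(J_2)$ of Theorem~\ref{thmB} holds. That theorem then provides a unique $J$-contractive $K\in[\sH_2,\sH_A]$ with $B^*=|A|^{1/2}K$; setting $C:=K^*\in[\sH_A,\sH_2]$ yields $B=C|A|^{1/2}$, and the $J$-contractivity of $K$ is precisely the inequality $J_2-CJ_AC^*\ge 0$. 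The companion inequality $J_A-C^*J_2C\ge 0$ then drops out of Lemma~\ref{inertia} applied to $\begin{pmatrix}J_A & C^*\\ C & J_2\end{pmatrix}$: combined with $\nu_-(J_2-CJ_AC^*)=0$ and $\nu_-(J_A)=\nu_-(A)=\nu_-(J_2)$, it forces $\nu_-(J_A-C^*J_2C)=0$.

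The reverse direction of (i) is a one-line computation, valid because $J_A$ and $|A|^{1/2}$ commute as spectral functions of $A$:
\[
 B^*J_2B=|A|^{1/2}C^*J_2C|A|^{1/2}\le|A|^{1/2}J_A|A|^{1/2}=A,
\]
and the uniqueness of $C$ follows at once from the density of $\ran|A|^{1/2}$ in $\sH_A$. The reverse direction of (ii) is the same computation with equality throughout. For the forward direction of (ii), part (i) supplies $B=C|A|^{1/2}$ with $C$ $J$-bicontractive; equating $A=|A|^{1/2}J_A|A|^{1/2}=|A|^{1/2}C^*J_2C|A|^{1/2}$ gives $|A|^{1/2}(J_A-C^*J_2C)|A|^{1/2}=0$, and since $|A|^{1/2}$ is injective on $\sH_A$ with range dense there, this forces $C^*J_2C=J_A$.

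For the unitarity claim, first note that $\cran B=\cran C$ because $\ran|A|^{1/2}$ is dense in $\sH_A$ and $C$ is continuous, so $\ran B$ is dense in $\sH_2$ exactly when $\ran C$ is dense. The forward implication is then trivial. For the converse, assume $\ran C$ is dense; the identity $C^*J_2C=J_A$ together with $J_A^2=I$ shows that $L:=J_AC^*J_2$ satisfies $LC=I$, so $C$ is bounded below and has closed range; combined with density this makes $C$ bijective with $C^{-1}=L$, and then $CL=I$ rearranges to $CJ_AC^*=J_2$, completing the $J$-unitarity of $C$. I expect the main obstacle to be precisely this last step — upgrading $J$-isometry plus dense range to full $J$-unitarity — which works cleanly only because the signature operators $J_A$ and $J_2$ are invertible and yield the explicit left inverse $J_AC^*J_2$.
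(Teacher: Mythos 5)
Your proof is correct and follows essentially the same route as the paper: reduce (i) to Theorem~\ref{thmB} via the inertia count, obtain bicontractivity from Lemma~\ref{inertia}, get the $J$-isometry in (ii) by substituting the factorization into $A=B^*J_2B$ and using density of $\ran|A|^{1/2}$ in $\sH_A$, and upgrade to unitarity from dense range. Your explicit left inverse $J_AC^*J_2$ is just the Kre\u{\i}n-space adjoint $C^{[*]}$ that the paper uses abstractly, so the final step is the same argument in concrete form.
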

\begin{proof}
(i) The inequality \eqref{min2} means that $\nu_-(A - B^*J_2 B)=0$.
Hence the assumption $\nu_-(A)=\nu_-(J_2)<\infty$ implies the
equality \eqref{min}. Therefore, the desired factorization for $B$
is obtained from Theorem \ref{thmB}. Conversely, if $B=C|A|^{1/2}$
for some $J$-contractive operator $C$ then \eqref{min} holds by
Theorem \ref{thmB} and the assumption $\nu_-(A)=\nu_-(J_2)<\infty$
implies that $\nu_-(A - B^*J_2 B)=0$.

The fact that $C$ is actually $J$-bicontractive follows directly
from Lemma~\ref{inertia}.

(ii) Assume that \eqref{min3} holds. Then by part (i) it remain to
prove that in the factorization $B=C|A|^{1/2}$ the operator $C$ is
isometric. Substituting $B=C|A|^{1/2}$ into \eqref{min3} gives
\[
 A=|A|^{1/2}C^*J_2C|A|^{1/2}.
\]
Since $\dom C,\, \ran C^* \subset \cran A$ and
$A=|A|^{1/2}J_A|A|^{1/2}$, the previous identity implies the
equality $J_A=C^*J_2C$, i.e., $C$ is $J$-isometric. Conversely, if
$C$ is $J$-isometric then clearly \eqref{min3} holds.

Since $B=C|A|^{1/2}$ and $C\in[\sH_A,\sH_2]$, it is clear that $B$
has dense range in $\sH_2$ precisely when the range of $C$ is dense
in $\sH_2$. The (Kre\u{\i}n space) adjoint is a bounded operator
with $\dom C^{[*]}=\sH_2$. By isometry one has $C^{-1}\subset
C^{[*]}$, and thus $C^{-1}$ is also bounded, densely defined and
closed. Thus, the equality $C^{-1}=C^{[*]}$ prevails, i.e., $C$ is
$J$-unitary. Conversely, if $C$ is unitary then $C^{-1}=C^{[*]}$
holds and $\ran C=\dom C^{[*]}=\sH_2$. Consequently, $\ran B=\ran
C|A|^{1/2}$ is dense in $\sH_2$.
\end{proof}

If, in particular, $\nu_-(A)=\nu_-(J_2)=0$ then $0\le A\le B^*B$ and
Proposition \ref{BKcor1} combined with Theorem~\ref{T:1} yields the factorization and
range inclusion results proved in \cite[Theorem~1]{Douglas} with $A$ replaced by $A^*A$.
In particular, notice that if $\ran B^*\subset \ran |A|^{1/2}$, then already Theorem~\ref{T:1}
alone implies that $S=|A|^{[-1/2]}B^*$ is bounded and hence
$B^*B=|A|^{1/2}SS^*|A|^{1/2}\le \|S\|^2 A$.

Assertions in part (ii) of Corollary \ref{BKcor1} can be found in
the literature with a different proof. In fact, the first statement
in (ii) appears in \cite[Proposition~2.1, Corollary~2.6]{ACG87} while
the second statement in (ii) is proved in
\cite[Corollary~1.3]{CG89}. Another extension for Douglas'
factorization result can be found from \cite{Rod07}.

For a general treatment of isometric (not necessarily densely
defined) operators and isometric relations appearing in the proof of
Proposition \ref{BKcor1} the reader is referred to \cite{AIbook},
\cite[Section~2]{DHMS2006}, and \cite{DHMS2012}.

A slightly different viewpoint to Proposition~\ref{BKcor1} gives
the following statement, which can be viewed as an extension of a theorem by Shmul'yan \cite{S67}
on the factorization of bicontractions on Kre\u{\i}n spaces;
for a related abstract Leech theorem, see \cite[Section~3.4]{DritRov90}.

\begin{corollary}\label{BKcor2}
Let $A\in[\sH_1]$ be selfadjoint, let $B\in[\sH_1,\sH_2]$, and let
$J_2=J_2^*=J_2^{-1}\in[\sH_2]$ with $\nu_-(J_2)<\infty$. Then:
\begin{enumerate}\def\labelenumi{\rm(\roman{enumi})}
\item
\[
 A\ge B^*J_2 B  \quad\text{and}\quad  \nu_-(A) = \nu_-(J_2)
\]
if and only if $B=C|A|^{1/2}$ for some $J$-bicontractive
operator $C\in[\sH_A,\sH_2]$; in this case $C$ is unique.

\item
\[
 A=B^*J_2 B  \quad\text{and}\quad  \nu_-(A) = \nu_-(J_2)
\]
if and only if $B=C|A|^{1/2}$ for some $J$-bicontractive
operator $C$ which is also $J$-isometric, i.e., $J_A-C^*J_2 C=
0$ and $J_2-CJ_A C^*\ge 0$; again $C$ is unique.
\end{enumerate}
\end{corollary}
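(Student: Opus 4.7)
The plan is to reduce Corollary \ref{BKcor2} to Proposition \ref{BKcor1} by moving the index-equality condition $\nu_-(A)=\nu_-(J_2)$ from a standing hypothesis into one of the equivalent statements, and then use Lemma \ref{inertia} together with the inertia inequality \eqref{minneg} to recover it from the bicontractivity conditions on $C$.

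For assertion (i), the forward direction is immediate: the hypothesis $A\ge B^*J_2B$ combined with $\nu_-(A)=\nu_-(J_2)<\infty$ is precisely the setting of Proposition \ref{BKcor1}(i), which produces a unique $J$-contractive (and in fact $J$-bicontractive) operator $C\in[\sH_A,\sH_2]$ with $B=C|A|^{1/2}$. For the converse, given $B=C|A|^{1/2}$ with $J$-bicontractive $C$, I would first obtain the inequality $A\ge B^*J_2B$ by sandwiching $J_A-C^*J_2C\ge 0$ between $|A|^{1/2}$, noting that $A=|A|^{1/2}J_A|A|^{1/2}$. To recover $\nu_-(A)=\nu_-(J_2)$, I would apply the inertia inequality \eqref{minneg} to both bicontractivity inequalities: from $J_A-C^*J_2C\ge 0$ one gets $\nu_-(J_A)\le \nu_-(J_2)$, while from $J_2-CJ_AC^*\ge 0$ one gets $\nu_-(J_2)\le \nu_-(J_A)$; since $\nu_-(J_A)=\nu_-(A)$, equality follows.

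For assertion (ii), the forward implication uses Proposition \ref{BKcor1}(ii) to produce a unique $J$-isometric $C\in[\sH_A,\sH_2]$ with $B=C|A|^{1/2}$. The remaining task is to verify that this $C$ is also $J$-bicontractive, i.e., $J_2-CJ_AC^*\ge 0$. Here I would invoke Lemma \ref{inertia} with $J_1=J_A$ and $T=C$: the identity
\[
 \nu_-(J_A-C^*J_2C)+\nu_-(J_2)=\nu_-(J_2-CJ_AC^*)+\nu_-(J_A),
\]
together with $J_A-C^*J_2C=0$ and $\nu_-(J_A)=\nu_-(A)=\nu_-(J_2)<\infty$, forces $\nu_-(J_2-CJ_AC^*)=0$. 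For the converse direction, $J$-isometry of $C$ gives $B^*J_2B=|A|^{1/2}C^*J_2C|A|^{1/2}=|A|^{1/2}J_A|A|^{1/2}=A$, and the index equality $\nu_-(A)=\nu_-(J_2)$ then follows exactly as in the converse of (i) using the $J$-bicontractivity of $C$.

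Uniqueness of $C$ in both parts is automatic, since $\ran |A|^{1/2}$ is dense in $\sH_A=\cran A$ (the stated domain of $C$), so the equation $B=C|A|^{1/2}$ determines $C$ on a dense subspace and $C$ is bounded. I do not foresee a genuine obstacle; the only point requiring care is the bookkeeping in the two-sided application of Lemma \ref{inertia} to convert between $J_A-C^*J_2C$ and $J_2-CJ_AC^*$, which is exactly what the inertia formula is designed to do.
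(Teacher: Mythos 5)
Your proposal is correct and follows essentially the same route as the paper: the paper's proof simply observes that $J$-bicontractivity of $C$ together with Lemma~\ref{inertia} forces $\nu_-(J_2)=\nu_-(J_A)=\nu_-(A)$, and then reduces everything to Proposition~\ref{BKcor1}. Your two-sided use of \eqref{minneg} is just an unfolded version of that single application of Lemma~\ref{inertia}, and the remaining details (sandwiching by $|A|^{1/2}$, uniqueness via density of $\ran|A|^{1/2}$ in $\sH_A$) are all sound.
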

\begin{proof}
Observe that if $C$ is $J$-bicontractive, then an application of
Lemma~\ref{inertia} shows that $\nu_-(J_2)=\nu_-(J_A)=\nu_-(A)$. Now
the stated equivalences can be obtained from
Proposition~\ref{BKcor1}.
\end{proof}

This section is finished with an extension of the classical Sylvester's criterion,
that is actually obtained as a consequence of Theorem~\ref{T:1}.

\begin{proposition}\label{sylvester}
Let $A=(A_{ij})_{i,j=1}^{2}$ be an arbitrary selfadjoint block operator in $\sH=\sH_1\oplus\sH_2$,
which satisfies the range inclusion \eqref{E:1}, and let $S=|A_{11}|^{[-1/2]}A_{12}$.
Then $\nu_-(A)<\infty$ if and only if
$\nu_-(A_{11})<\infty$ and $\nu_-(A_{22}-S^*JS)<\infty$; in this case
\[
 \nu_-(A)=\nu_-(A_{11})+\nu_-(A_{22}-S^*JS).
\]
In particular, $A\ge 0$ if and only if $\ran A_{12}\subset\ran |A_{11}|^{1/2}$,
$A_{11}\ge 0$, and $A_{22}-S^*JS\ge 0$.
\end{proposition}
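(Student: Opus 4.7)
The plan is to establish the identity $\nu_{-}(A)=\nu_{-}(A_{11})+\nu_{-}(A_{22}-S^{*}JS)$ under the standing range-inclusion hypothesis; both the iff and the concluding $A\ge 0$ characterization then drop out. The iff is immediate with the convention $\infty+x=\infty$, and $A\ge 0\iff \nu_{-}(A)=0\iff$ both summands vanish; in that case $A_{11}\ge 0$ forces $J=I$ so $S^{*}JS=S^{*}S$, while the range inclusion is the standing hypothesis (and for the $A\ge 0$ case without assuming it a priori, it is recovered from the classical $\kappa=0$ case of Theorem \ref{T:1}). I would prove the two inequalities $\le$ and $\ge$ separately.

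For $\nu_{-}(A)\le\nu_{-}(A_{11})+\nu_{-}(A_{22}-S^{*}JS)$, the first step is to exhibit the factorization
\[
 A=LDL^{*},\qquad L=\begin{pmatrix}|A_{11}|^{1/2}&0\\ S^{*}J&I\end{pmatrix},\qquad D=\begin{pmatrix}J&0\\ 0&A_{22}-S^{*}JS\end{pmatrix},
\]
which is a direct computation using $A_{12}=|A_{11}|^{1/2}S$ (guaranteed by \eqref{E:1} together with Theorem \ref{T:1}) together with the commutation $|A_{11}|^{1/2}J|A_{11}|^{1/2}=A_{11}$ and $J^{2}=I$. For any $A$-negative subspace $\sM$, the identity $(Af,f)=(DL^{*}f,L^{*}f)$ forces $L^{*}$ to be injective on $\sM$ (otherwise a nonzero $f\in\sM$ with $L^{*}f=0$ would give $(Af,f)=0$), so $L^{*}\sM$ is a $D$-negative subspace of dimension $\dim\sM$. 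Since $D$ is block-diagonal, $\nu_{-}(D)=\nu_{-}(J)+\nu_{-}(A_{22}-S^{*}JS)=\nu_{-}(A_{11})+\nu_{-}(A_{22}-S^{*}JS)$, and the upper bound follows by taking the supremum over $\sM$.

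For the reverse inequality the plan is an approximation argument via Schur complements. If $\nu_{-}(A_{11})=\infty$ then compression gives $\nu_{-}(A)=\infty$ and the formula is trivial, so assume $\kappa:=\nu_{-}(A_{11})<\infty$. Then the negative spectrum of $A_{11}$ consists of finitely many eigenvalues bounded away from $0$ by some $|\lambda_{1}|>0$, so for $0<\e<|\lambda_{1}|$ the operator $A_{11}+\e$ is invertible with $\nu_{-}(A_{11}+\e)=\kappa$. The Schur complement identity \eqref{E:Silv} applied to $A+\e I$ yields
\[
 \nu_{-}(A+\e I)=\kappa+\nu_{-}(T_{\e}),\qquad T_{\e}:=A_{22}+\e-A_{21}(A_{11}+\e)^{-1}A_{12},
\]
and since $A+\e I\ge A$ this gives $\nu_{-}(A)\ge\kappa+\nu_{-}(T_{\e})$ for every small $\e$. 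Writing $A_{12}=|A_{11}|^{1/2}S$ and invoking spectral calculus, $|A_{11}|^{1/2}(A_{11}+\e)^{-1}|A_{11}|^{1/2}$ increases monotonically in the strong operator topology to $J$ on $\cran A_{11}$ as $\e\searrow 0$ (and $\ran S\subset \cran A_{11}$ by the Moore--Penrose construction), so $T_{\e}\downarrow A_{22}-S^{*}JS$.

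The main obstacle is to transfer this operator-monotone convergence to the level of negative indices: I plan to show $\nu_{-}(T_{\e})\nearrow\nu_{-}(A_{22}-S^{*}JS)$ as $\e\searrow 0$. The bound $\le$ is immediate from $T_{\e}\ge A_{22}-S^{*}JS$; the bound $\ge$ uses that any finite-dimensional subspace on which $A_{22}-S^{*}JS$ is uniformly negative, with constant $c>0$, has a compact unit sphere on which the strong perturbation $T_{\e}-(A_{22}-S^{*}JS)\to 0$ uniformly, and is therefore $T_{\e}$-negative for all sufficiently small $\e$. Applying this for each finite $n\le\nu_{-}(A_{22}-S^{*}JS)$ (including the case when the latter is infinite) forces $\nu_{-}(T_{\e})\ge n$ eventually. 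Passing to the limit yields $\nu_{-}(A)\ge\kappa+\nu_{-}(A_{22}-S^{*}JS)$, and combined with the upper bound this gives the formula and hence the proposition.
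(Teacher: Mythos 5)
Your proof is correct, and for the harder inequality it takes a genuinely different route from the paper. The easy direction coincides: both you and the paper use the congruence $A=LDL^{*}$ with $D=\diag(J,\,A_{22}-S^{*}JS)$ and the observation that $L^{*}$ carries $A$-negative subspaces injectively to $D$-negative ones, giving $\nu_-(A)\le\nu_-(A_{11})+\nu_-(A_{22}-S^{*}JS)$. For the reverse inequality the paper stays with this single factorization: since the triangular factor is not boundedly invertible, it argues by contradiction, using the density of $\ran L^{*}$ in $\cran A_{11}\oplus\sH_2$ and a maximal-negative-subspace/orthocomplement argument to manufacture inside $\ran L^{*}$ a $D$-negative subspace of dimension exceeding $\nu_-(A)$, which it then pulls back to an $A$-negative subspace. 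You instead regularize: for $\nu_-(A_{11})=\kappa<\infty$ the operator $A_{11}+\e$ is boundedly invertible for small $\e>0$, so the Schur-complement congruence \eqref{E:Silv} involves an invertible triangular factor and honestly preserves inertia, yielding $\nu_-(A)\ge\nu_-(A+\e I)=\kappa+\nu_-(T_\e)$; the monotone strong convergence $T_\e\searrow A_{22}-S^{*}JS$ together with lower semicontinuity of the negative index on finite-dimensional (hence uniformly negative, compact-sphere) test subspaces then gives $\nu_-(T_\e)\nearrow\nu_-(A_{22}-S^{*}JS)$. Your route recycles the $\e$-machinery already set up in the proof of Theorem \ref{T:1} and avoids the somewhat delicate dense-range argument, at the cost of a limiting process; the paper's route is purely algebraic once the density claim is granted. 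Your handling of the degenerate cases ($\nu_-(A_{11})=\infty$ by compressing to $\sH_1$, and the recovery of the range inclusion in the $A\ge 0$ corollary from the $\kappa=0$ case of Theorem \ref{T:1}) is also in order.
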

\begin{proof}
By the assumption \eqref{E:1} $S=|A_{11}|^{[-1/2]}A_{12}$ is an everywhere defined bounded
operator and, since $A_{11}=|A_{11}|^{1/2}J|A_{11}|^{1/2}$ (cf. Theorem~\ref{T:1}), the following equality holds:
\[
 A=\begin{pmatrix}
   |A_{11}|^{1/2} &0\\
   S^*J&I
  \end{pmatrix}
  \begin{pmatrix}
   J & 0\\
   0& A_{22}-S^*JS
  \end{pmatrix}
  \begin{pmatrix}
   |A_{11}|^{1/2} & JS \\
   0& I
  \end{pmatrix},
\]
i.e. $A=B^*EB$ where $E$ stands for the diagonal operator with
$\nu_-(E)=\nu_-(A_{11})+\nu_-(A_{22}-S^*JS)$
and the triangular operator $B$ on the right side is bounded and has dense range in
$\cran A_{11}\oplus\sH_2$.
Clearly, $\nu_-(A)\le \nu_-(E)$ and it remains to prove that if $\nu_-(A)<\infty$
then $\nu_-(A)=\nu_-(E)$.

To see this assume that $\nu_-(A)<\nu_-(E)$. We claim that $\ran B$ contains an $E$-negative subspace
$\sL$ with dimension $\dim \sL>\nu_-(A)$. Assume the converse and let $\sL\subset \ran B$ be a maximal $E$-negative subspace
with $\dim \sL\le \nu_-(A)$. Then $(E\sL)^\perp$ must be $E$-nonnegative, since if $v\perp E\sL$ and
$(Ev,v)<0$, then  $\spn\{v+\sL\}$ would be a proper $E$-negative extension of $\sL$.
Since $E\sL$ is finite-dimensional and $\ran B$ is dense in $\cran A_{11}\oplus\sH_2$, $\ran B$ has dense intersection
with $(\cran A_{11}\oplus\sH_2)\ominus E\sL$, and hence the closure of this subspace is also $E$-nonnegative.
Consequently, $\nu_-(E)=\nu_-(\sL)$, a contradiction with the assumption $\nu_-(E)>\nu_-(A)$.
This proves the claim that $\ran B$ contains an $E$-negative subspace
$\sL$ with $\dim \sL>\nu_-(A)$. However, then the subspace $\sL'=\{u\in \cran A_{11}\oplus\sH_2:Bu\in\sL\}$
satisfies $\dim \sL'\ge \dim \sL$ and, moreover, $\sL'$ is $A$-negative: $(Au,u)=(EBu,Bu)<0$, $u\in\sL'$, $u\neq 0$.
Thus, $\nu_-(A)\ge \dim \sL$, a contradiction with $\dim \sL>\nu_-(A)$. This completes the proof.
\end{proof}

Proposition~\ref{sylvester} completes Theorem~\ref{T:1}: it shows
that if $\ran A_{12}\subset\ran |A_{11}|^{1/2}$ then
$A_{11}=J|A_{11}|$ and $A_{12}=|A_{11}|^{1/2}S$ imply that
$A_{21}|A_{11}|^{[-1/2]}J|A_{11}|^{[-1/2]}A_{12}=S^*JS$. Hence the
negative index of $A$ can be calculated by using the following
version of a \textit{generalized of Schur complement}:
\begin{equation}\label{genSchur}
 \nu_-(A)=\nu_-(A_{11})+\nu_-(A_{22}-A_{21}|A_{11}|^{[-1/2]}J|A_{11}|^{[-1/2]}A_{12}).
\end{equation}
The addition made in Proposition~\ref{sylvester} concerns
selfadjoint operators $A_{22}$ that are not solutions to the
original completion problem for $A^0$.

\section{Lifting of operators with finite negative index}
\label{sec3}

As a first application of the completion problem solved in
Section~\ref{sec1} it is shown how nicely some lifting results
established in a series of papers by Arsene, Constantinescu, and
Gheondea, see \cite{AG82,ACG87,CG89,CG92}, as well as in Dritschel,
see \cite{Drit90,DritRov90} (see also further references appearing
in these papers), on contractive operators with finite number of
negative squares can be derived from Theorem~\ref{T:1}.

For this purpose some standard notations are now introduced. Let
$(\sH_1,(\cdot,\cdot)_{1})$ and $(\sH_2,(\cdot,\cdot)_{2})$ be
Hilbert spaces and let $J_1$ and $J_2$ be symmetries in $\sH_1$ and
$\sH_2$, i.e. $J_i=J_i^*=J_i^{-1}$, so that
$(\sH_i,(J_i\cdot,\cdot)_{i})$, $i=1,2$, becomes a Kre\u{\i}n space.
Then associate with $T\in[\sH_1,\sH_2]$ the corresponding defect and
signature operators
\[
 D_T=|J_1-T^*J_2T|^{1/2},\quad J_T=\sign(J_1-T^*J_2T), \quad \sD_T=\cran D_T,
\]
where the so-called defect subspace $\sD_T$ can be considered as a
Kre\u{\i}n space with the fundamental symmetry $J_T$. Similar
notations are used with $T^*$:
\[
 D_{T^*}=|J_2-TJ_1T^*|^{1/2},\quad J_{T^*}=\sign(J_2-TJ_1T^*), \quad \sD_{T^*}=\cran D_{T^*}.
\]
By definition $J_TD_T^2=J_1-T^*J_2T$ and $J_TD_T=D_TJ_T$ with
analogous identities for $D_{T^*}$ and $J_{T^*}$. In addition,
\begin{equation}\label{eqC3}
 (J_1-T^*J_2T)J_1T^*=T^*J_2(J_2-TJ_1T^*), \,
 (J_2-TJ_1T^*)J_2T=TJ_1(J_1-T^*J_2T).
\end{equation}

Recall that $T\in[\sH_1,\sH_2]$ is said to be a $J$-contraction if $J_1-T^*J_2T\ge 0$,
i.e. $\nu_-(J_1-T^*J_2T)=0$. If, in addition, $T^*$ is a $J$-contraction, $T$ is termed as
a $J$-bicontraction, in which case $\nu_-(J_1)=\nu_-(J_2)$ by Lemma~\ref{inertia}.
In what follows it is assumed that
\[
 \kappa_1:=\nu_-(J_1-T^*J_2T)<\infty,\quad  \kappa_2:=\nu_-(J_2-TJ_1T^*)<\infty.
\]
In this case Lemma~\ref{inertia} shows that
\begin{equation}\label{iner01}
 \nu_-(J_2) = \nu_-(J_1) + \kappa_2-\kappa_1.
\end{equation}

The aim in this section is to show applicability of
Theorem~\ref{T:1} in establishing formulas for so-called liftings
$\wt T$ of $T$ with prescribed negative indices $\wt\kappa_1$ and
$\wt\kappa_2$ for the defect subspaces. Given a bounded operator
$T\in[\sH_1,\sH_2]$ the problem is to describe all operators $\wt T$
from the extended Kre\u{\i}n space
$(\sH_1\oplus\sH_1^\prime,J_1\oplus J_1^\prime)$ to the extended
Kre\u{\i}n space $(\sH_2\oplus\sH_2^\prime,J_2\oplus J_2^\prime)$
such that
\[
\textbf{$(*)$} \qquad P_2 \wt T\uphar \sH_1 = T  \quad\text{and}\quad
 \nu_-(\wt J_1-\wt T^*\wt J_2 \wt T)=\wt\kappa_1, \quad
 \nu_-(\wt J_2-\wt T\wt J_1 \wt T^*)=\wt\kappa_2,
\]
with some fixed values of $\wt\kappa_1,\wt\kappa_2<\infty$. Here
$P_i$ stands for the orthogonal projection from
$\wt\sH_i=\sH_i\oplus\sH_i^\prime$ onto $\sH_i$ and $\wt
J_i=J_i\oplus J_i^\prime$, $i=1,2$. In addition, it is assumed that
the exit spaces are Pontryagin spaces, i.e., that
\[
 \nu_-(J_1^\prime), \nu_-(J_2^\prime) <\infty.
\]
Following \cite{ACG87,CG89} consider first the following column extension problem:

$\textbf{$(*)_c$}$ Give a description of all (column) operators
$T_c=\col\begin{pmatrix}T & C \end{pmatrix}\in
[\sH_1,\sH_2\oplus\sH_2^\prime]$, such that
$\nu_-(J_1-T_c^*\wt J_2T_c)=\wt \kappa_1\,(<\infty)$.

Since $J_1-T_c^*\wt J_2T_c=J_1-T^*J_2T-C^*J_2^\prime C$, then
necessarily (see Section~\ref{sec2})
\[
 \wt\kappa_1 \ge \kappa_1-\nu_-(C^*J_2^\prime C)\ge \kappa_1 -\nu_-(J_2^\prime).
\]
Moreover, it is clear that $\wt\kappa_2\ge \kappa_2$, since
$J_2-TJ_1T^*$ appears as the first diagonal entry of the $2\times 2$
block operator $\wt J_2-T_c J_1T_c^*$ when decomposed w.r.t.
$\wt\sH_i=\sH_i\oplus\sH_i^\prime$, $i=1,2$.

With the minimal value of $\wt \kappa_1$ all solutions to this
problem will now be described by applying Theorem~\ref{T:1} to an
associated $2\times 2$ block operator $T_C$ appearing in the proof
below; in fact the result is just a special case of
Theorem~\ref{thmB}.

\begin{lemma}\label{corL1}
Let $\wt\kappa_1=\nu_-(J_1-T_c^*\wt J_2T_c)$ and assume that
$\wt\kappa_1=\kappa_1-\nu_-(J_2^\prime)(\ge 0)$. Then $\ran
C^*\subset \ran D_T$ and the formula
\[
 T_c=\begin{pmatrix}T \\ K^*D_{T} \end{pmatrix}
\]
establishes a one-to-one correspondence between the set of all
solutions to Problem $\textbf{$(*)_c$}$ and the set of all
$J$-contractions $K\in[\sH'_2,\sD_{T}]$.
\end{lemma}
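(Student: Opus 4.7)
The plan is to specialize Theorem~\ref{thmB} directly to the column lifting problem. I will take $A := J_1 - T^*J_2T \in [\sH_1]$, for which $|A|^{1/2} = D_T$, $\sgn(A) = J_T$, and $\sH_A = \sD_T$; and $B := C \in [\sH_1, \sH_2']$, with the symmetry ``$J_2$'' in Theorem~\ref{thmB} replaced by the fundamental symmetry $J_2'$ of $\sH_2'$. The finite-negative-index hypotheses $\nu_-(A) = \kappa_1 < \infty$ and $\nu_-(J_2') < \infty$ of that theorem are both available from our assumptions.

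The key identification is
\[
 A - B^* J_2' B = (J_1 - T^*J_2T) - C^*J_2'C = J_1 - T_c^* \wt J_2\, T_c,
\]
so the equality $\nu_-(A) = \nu_-(A - B^* J_2' B) + \nu_-(J_2')$ from Theorem~\ref{thmB} reads exactly as $\kappa_1 = \wt\kappa_1 + \nu_-(J_2')$, which is precisely the minimality hypothesis $\wt\kappa_1 = \kappa_1 - \nu_-(J_2')$. Thus Theorem~\ref{thmB} is applicable in both directions. The forward part yields $\ran C^* \subset \ran D_T$ together with a unique $K \in [\sH_2', \sD_T]$ satisfying $C^* = D_T K$ and the $J$-contractivity $J_2' - K^* J_T K \ge 0$ (noting $J_A = J_T$); transposing gives $C = K^* D_T$, which is the claimed form $T_c = \col(T, K^* D_T)$.

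For the converse, given any $J$-contraction $K \in [\sH_2', \sD_T]$, I set $C := K^* D_T$, so that $C^* = D_T K = |A|^{1/2}K$; the converse part of Theorem~\ref{thmB} then yields $\kappa_1 = \wt\kappa_1 + \nu_-(J_2')$, so that $T_c = \col(T, K^* D_T)$ indeed solves Problem~$\textbf{$(*)_c$}$ with the prescribed minimal value of $\wt\kappa_1$. Uniqueness of $K$ in the factorization $C^* = D_T K$ with $\ran K \subset \cran D_T = \sD_T$ (built into Theorems~\ref{T:1} and~\ref{thmB}) makes the correspondence $T_c \leftrightarrow K$ bijective. Since the lemma is essentially a direct translation of Theorem~\ref{thmB} into the column-lifting setup, there is no serious obstacle; the only point requiring care is the bookkeeping that matches the minimality condition $\wt\kappa_1 = \kappa_1 - \nu_-(J_2')$ to the equality case \eqref{min} in Theorem~\ref{thmB} and identifies the Kre\u{\i}n space $(\sD_T, J_T)$ with $(\sH_A, J_A)$.
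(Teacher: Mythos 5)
Your proposal is correct and follows essentially the same route as the paper: the paper's proof forms the block operator $T_C=\begin{pmatrix} J_1-T^*J_2T & C^* \\ C & J_2' \end{pmatrix}$, which is exactly the operator $H$ of Theorem~\ref{thmB} under your identifications $A=J_1-T^*J_2T$, $B=C$, and then invokes Theorem~\ref{thmB} (or Theorem~\ref{T:1}) just as you do. Your bookkeeping matching $\wt\kappa_1=\kappa_1-\nu_-(J_2')$ with the equality \eqref{min} and identifying $(\sD_T,J_T)$ with $(\sH_A,J_A)$ is the same computation the paper carries out via the triangular factorization of $T_C$.
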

\begin{proof}
To make the argument more explicit consider the following block
operator
\[
 T_C:=\begin{pmatrix} J_1-T^*J_2T & C^* \\ C & J_2^\prime \end{pmatrix}
 = \begin{pmatrix} I & C^*J_2^\prime \\ 0 & I \end{pmatrix}
  \begin{pmatrix} J_1-T_c^*\wt J_2T_c & 0 \\ 0 & J_2^\prime \end{pmatrix}
   \begin{pmatrix} I  & 0 \\ J_2^\prime C & I \end{pmatrix}.
\]
Clearly $\nu_-(T_C)=\nu_-(J_1-T_c^*\wt
J_2T_c)+\nu_-(J_2^\prime)<\infty$, which combined with
$\wt\kappa_1=\kappa_1-\nu_-(J_2^\prime)$ shows that
$\nu_-(T_C)=\kappa_1=\nu_-(J_1-T^*J_2T)$. Now, the statement is
obtained from Theorem~\ref{T:1} or, more directly, just by applying
Theorem~\ref{thmB}.
\end{proof}

\begin{remark}
(i) The above proof, which essentially makes use of an associated
$2\times 2$ block operator $T_c$ (being a special case of the block
operator $H$ in \eqref{H} behind Theorem~\ref{thmB}), is new even in
the case of Hilbert space contractions. In particular, it shows that
the operator $K$ in Lemma~\ref{corL1} coincides with the operator
$S$ that gives the minimal solution $S^*J_{T}S$ to the completion
problem associated with $T_C$; the $J$-contractivity of $K$ itself
is equivalent to the fact that $T_C$ is also a solution precisely
when $\wt\kappa=\kappa-\nu_-(J_2^\prime)$.

(ii) The existence of a solution to Problem $\textbf{$(*)_c$}$ is proved here using only the condition $\wt\kappa_1=\kappa_1-\nu_-(J_2^\prime)\,(\ge 0)$. The corresponding result in \cite[Lemma~2.2]{CG89} is formulated (and formally also proved) under the additional condition $\wt\kappa_2=\kappa_2$.
In the case that $\nu_-(J_1)<\infty$ the equality $\wt\kappa_2=\kappa_2$ follows automatically
from the equality $\wt\kappa_1=\kappa_1-\nu_-(J_2^\prime)$: to see this apply
\eqref{iner01} to $T$ and $T_c$, which leads to $\nu_-(J_1)+\kappa_2=\nu_-(J_1)+\wt\kappa_2$,
so that $\nu_-(J_1)<\infty$ implies $\kappa_2=\wt\kappa_2$.
Naturally, in Lemma~\ref{corL1} the condition $\wt\kappa_2=\kappa_2$ follows from the
condition $\wt\kappa_1=\kappa_1-\nu_-(J_2^\prime)$ also in the case where $\nu_-(J_1)=\infty$;
see Corollary~\ref{k2cor} below.

Finally, it is mentioned that for a Pontryagin space operator
$T$ the result in Lemma~\ref{corL1} was proved in \cite[Lemma~5.2]{ACG87}.
\end{remark}

In a dual manner we can treat the following row extension problem;
again initially considered in \cite{ACG87,CG89}:

$\textbf{$(*)_r$}$ Give a description of all operators
$T_r=\begin{pmatrix}T & R\end{pmatrix}\in
[\sH_1\oplus\sH'_1,\sH_2]$, such that
$\nu_-(\wt J_1-T_r^* J_2 T_r)=\wt\kappa_2\,(<\infty)$.

Analogous to the case of column operators, $J_2-T_r\wt J_1T_r^*=J_2-TJ_1T^*-RJ_1^\prime R^*$
gives the estimate
\[
 \wt\kappa_2 \ge \kappa_2-\nu_-(RJ_1^\prime R^*)
 \ge \kappa_2-\nu_-(J_1^\prime).
\]
Moreover, it is clear that $\wt\kappa_1\ge \kappa_1$.
With the minimal value of $\wt \kappa_2$ all solutions to
Problem $\textbf{$(*)_r$}$ are established by applying Theorem~\ref{T:1}
to an associated $2\times 2$ block operator $T_R$.

\begin{lemma}\label{corL2}
Let $\wt\kappa_2=\nu_-(J_2-T_r\wt J_1T_r^*)$ and assume that
$\wt\kappa_2=\kappa_2 - \nu_-(J_1^\prime)(\ge 0)$. Then $\ran
R\subset \ran D_{T^*}$ and the formula
\[
 T_r=\begin{pmatrix}T & D_{T^*} B\end{pmatrix}
\]
establishes a one-to-one correspondence between the set of all
solutions to Problem $\textbf{$(*)_r$}$ and the set of all
$J$-contractions $B\in[\sH'_1,\sD_{T^*}]$.
\end{lemma}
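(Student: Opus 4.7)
The plan is to dualize the argument of Lemma~\ref{corL1}: in place of the column block used there, I would form the auxiliary selfadjoint block operator
\[
 T_R:=\begin{pmatrix} J_2-TJ_1T^* & R \\ R^* & J_1^\prime \end{pmatrix}
\]
and apply Theorem~\ref{thmB} to it with the identifications $A = J_2-TJ_1T^*$ (so $|A|^{1/2}=D_{T^*}$ and $J_A=J_{T^*}$), $B^*=R$, and the role of $J_2$ in Theorem~\ref{thmB} now played by $J_1'$.

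First I would reproduce the triangular factorization behind Lemma~\ref{corL1} to get
\[
 T_R = \begin{pmatrix} I & RJ_1' \\ 0 & I \end{pmatrix}
  \begin{pmatrix} J_2-T_r\wt J_1 T_r^* & 0 \\ 0 & J_1^\prime \end{pmatrix}
  \begin{pmatrix} I  & 0 \\ J_1'R^* & I \end{pmatrix},
\]
so that $\nu_-(T_R)=\wt\kappa_2+\nu_-(J_1')$. Combined with the standing hypothesis $\wt\kappa_2=\kappa_2-\nu_-(J_1')$, this yields $\nu_-(T_R)=\kappa_2=\nu_-(J_2-TJ_1T^*)$, which is precisely the minimal negative-index equality \eqref{min} appearing in Theorem~\ref{thmB}.

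Next, Theorem~\ref{thmB} will deliver the range inclusion $\ran R\subset \ran D_{T^*}$ together with a unique $J$-contractive operator $B\in[\sH_1',\sD_{T^*}]$ such that $R=D_{T^*}B$; the $J$-contractivity here is the inequality $J_1'-B^*J_{T^*}B\ge 0$. Conversely, starting from any $J$-contraction $B\in[\sH_1',\sD_{T^*}]$ and setting $R=D_{T^*}B$, the converse direction of Theorem~\ref{thmB} will return $\nu_-(T_R)=\kappa_2$, equivalently $\wt\kappa_2=\kappa_2-\nu_-(J_1')$. The map $T_r\mapsto B$ is thus a bijection onto the stated set.

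I do not expect an essential obstacle: the whole point of Lemmas~\ref{corL1}--\ref{corL2} is that Theorem~\ref{thmB} drives the argument. The only care required is the bookkeeping of adjoints — I must apply Theorem~\ref{thmB} to $T_R$ itself (rather than to a transposed variant), so that $B^*=R$ and hence the range inclusion lands on $\ran R$ and the $J$-contractivity falls on the ``outer'' factor $B$ with values in the Kre\u{\i}n space $\sD_{T^*}$, exactly as in the formulation of the lemma.
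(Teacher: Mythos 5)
Your proposal is correct and follows essentially the same route as the paper: the authors form exactly the same auxiliary block operator $T_R$, use the same triangular factorization to compute $\nu_-(T_R)=\wt\kappa_2+\nu_-(J_1')$, and then invoke Theorem~\ref{thmB} with the same identifications. No gaps.
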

\begin{proof}
To prove the statement via Theorem~\ref{thmB} (or Theorem~\ref{T:1})
consider
\[
 T_R:=\begin{pmatrix} J_2-TJ_1T^* & R \\ R^* & J_1^\prime \end{pmatrix}
 = \begin{pmatrix} I & RJ_1^\prime \\ 0 & I \end{pmatrix}
  \begin{pmatrix} J_2-T_r\wt J_1T_r^* & 0 \\ 0 & J_1^\prime \end{pmatrix}
   \begin{pmatrix} I  & 0 \\ J_1^\prime R^* & I \end{pmatrix}.
\]
Then clearly $\nu_-(T_R)=\nu_-(J_2-T_r\wt
J_1T_r^*)+\nu_-(J_1^\prime)$ and hence the assumption
$\wt\kappa_2=\kappa_2 - \nu_-(J_1^\prime)$ is equivalent to
$\nu_-(T_R)=\kappa=\nu_-(J_2-TJ_1T^*)$. Hence, again the statement
follows from Theorem~\ref{thmB}.
\end{proof}

Remarks similar to those made after Lemma~\ref{corL1} can be done here, too.
In particular, the corresponding result in \cite[Lemma~2.1]{CG89} is formulated under the additional condition $\wt\kappa_1=\kappa_1$: here this equality will be a consequence from the equality
$\wt\kappa_2=\kappa_2 - \nu_-(J_1^\prime)$; cf. Corollary~\ref{k2cor} below.

To prove the main result concerning parametrization of all $2\times
2$ liftings in a larger Kre\u{\i}n space with minimal signature for
the defect operators an indefinite version of the commutation
relation of the form $TD_T=D_{T^*}T$ is needed; these involve
so-called link operators introduced in \cite[Section 4]{ACG87}.

We will give a simple proof for the construction of link operators
(see \cite[Proposition 4.1]{ACG87}) by applying Heinz inequality
combined with the basic factorization result from \cite{Douglas}.
The first step is formulated in the next lemma, which is connected
to a result of M.G. Kre\u{\i}n \cite{Kr47b} concerning continuity of
a bounded Banach space operator which is symmetric w.r.t. to a
continuous definite inner product; the existence of link was proved
proved in \cite{ACG87} via this result of Kre\u{\i}n. Here a statement,
analogous to that of Kre\u{\i}n, is formulated in pure Hilbert space operator
language by using the modulus of the product operator;
see \cite[Lemma B2]{DritRov90},
where Kre\u{\i}n's result is presented with a proof due to W.T. Reid.

\begin{lemma}\label{2norms}
Let $S\in[\sH_1,\sH_2]$ and let $H\in[\sH_2]$ be nonnegative. Then
\[
  HS=(HS)^* \quad\Rightarrow\quad |HS| \le \mu H \text{ for some } \mu <\infty.
\]
\end{lemma}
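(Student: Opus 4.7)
The plan is to reduce the claim to a one-line spectral calculation combined with the monotonicity of the operator square root (the Heinz/Löwner--Heinz inequality). The hypothesis $HS=(HS)^{*}$ is equivalent to $HS=S^{*}H$. Using this to rewrite one of the two factors in the modulus, I would compute
\[
 |HS|^{2}=(HS)^{*}(HS)=(HS)(HS)=HS\cdot S^{*}H=H(SS^{*})H,
\]
and then sandwich with the trivial estimate $SS^{*}\le \|S\|^{2}I$ together with $H\ge 0$ to obtain the operator inequality
\[
 0\le |HS|^{2}\le \|S\|^{2}\,H^{2}.
\]

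Applying Heinz's inequality with exponent $1/2$ (monotonicity of $t\mapsto t^{1/2}$ on the cone of bounded nonnegative operators) to this chain then immediately delivers $|HS|\le \|S\|\,H$, so that the constant $\mu=\|S\|$ works. The only step that genuinely uses the selfadjointness of $HS$ is the opening identification $|HS|^{2}=(HS)^{2}$; the remainder of the argument is routine.

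I do not anticipate a real obstacle: the content of the lemma is not in its proof but in the fact that it replaces Kre\u{\i}n's topological continuity argument (originally carried out with respect to an auxiliary definite inner product) by a purely Hilbert-space operator-theoretic estimate. Douglas' factorization, highlighted in the surrounding discussion, is not strictly needed for the norm estimate itself, but is the natural companion tool: once $|HS|\le \mu H$ is available, Douglas' theorem yields a bounded factorization of $|HS|^{1/2}$ through $H^{1/2}$, which is precisely the input required for the subsequent construction of the link operators.
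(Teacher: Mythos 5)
Your argument is correct and is essentially identical to the paper's own proof: both use selfadjointness of $HS$ to write $|HS|^2=(HS)^2=HSS^*H\le\|S\|^2H^2$ and then invoke the Heinz (L\"owner--Heinz) inequality with exponent $1/2$ to conclude $|HS|\le\|S\|H$. No gaps.
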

\begin{proof}
Since $HS$ is selfadjoint, one obtains
\[
 (HS)^2=HSS^*H \le \mu^2 H^2, \quad \mu = \|S\|<\infty.
\]
Now by Heinz inequality (see e.g. \cite[Theorem~10.4.2]{BirSol87})
we get
\[
 |HS|=(HSS^*H)^{1/2} \le \mu H. \qedhere
\]
\end{proof}

\begin{corollary}\label{linkoper}
Let $T\in[\sH_1,\sH_2]$ and let $J_1$ and $J_2$ be symmetries in
$\sH_1$ and $\sH_2$ as above. Then there exist unique operators
$L_T\in [\sD_T,\sD_{T^*}]$ and $L_{T^*}\in [\sD_{T^*},\sD_T]$ such
that
\[
 D_{T^*}L_T=TJ_1 D_T\uphar \sD_T, \quad
 D_T L_{T^*}=T^*J_2 D_{T^*}\uphar \sD_{T^*};
\]
in fact, $L_T=D_{T^*}^{[-1]}TJ_1 D_T\uphar \sD_T$ and
$L_{T^*}=D_T^{[-1]}T^*J_2 D_{T^*}\uphar \sD_{T^*}$.
\end{corollary}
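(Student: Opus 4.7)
The plan is to establish the factorization $TJ_1D_T = D_{T^*}L_T$ by Douglas' theorem and then to obtain $L_{T^*}$ by an entirely symmetric argument. Recall that by Douglas' theorem the existence of a bounded $L_T$ with $\ran L_T \subseteq \sD_{T^*} = \cran D_{T^*}$ satisfying $TJ_1D_T = D_{T^*}L_T$ is equivalent to the Douglas-type range/operator bound
\[
 (TJ_1D_T)(TJ_1D_T)^* = TJ_1 D_T^2 J_1 T^* \le \mu^2 D_{T^*}^2
\]
for some $\mu<\infty$, so the whole task reduces to proving such an inequality.

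To do this I would exploit the commutation identities \eqref{eqC3}. Using $J_TD_T^2 = J_1 - T^*J_2T$, $J_{T^*}D_{T^*}^2 = J_2 - TJ_1T^*$ and the first identity in \eqref{eqC3} yields
\[
 TJ_1 D_T^2 J_1 T^* = TJ_1 J_T (J_1 - T^*J_2T) J_1 T^* = TJ_1 J_T T^* J_2 J_{T^*} D_{T^*}^2 = N D_{T^*}^2,
\]
with $N := TJ_1 J_T T^* J_2 J_{T^*} \in [\sH_2]$. The left-hand side is manifestly selfadjoint, hence so is $N D_{T^*}^2 = D_{T^*}^2 N^*$. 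This is precisely the hypothesis of Lemma~\ref{2norms} with $H = D_{T^*}^2 \ge 0$ and $S = N^*$, so that lemma delivers $|D_{T^*}^2 N^*| \le \|N\|\, D_{T^*}^2$. Since $D_{T^*}^2 N^* = (TJ_1D_T)(TJ_1D_T)^*$ is already nonnegative, its absolute value equals itself, and the required Douglas bound follows. Thus $TJ_1D_T = D_{T^*}L_T'$ for a (unique) bounded $L_T'$ with range in $\sD_{T^*}$, and its restriction $L_T := L_T'\uphar \sD_T$ lies in $[\sD_T,\sD_{T^*}]$.

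The operator $L_{T^*}$ is then constructed in the completely parallel manner by starting from the second identity in \eqref{eqC3}, $(J_2 - TJ_1T^*)J_2T = TJ_1(J_1 - T^*J_2T)$, and interchanging the roles of $\sH_1,\sH_2$ (and of $T,T^*$). Uniqueness is automatic in both cases: any two solutions in $[\sD_T,\sD_{T^*}]$ (respectively $[\sD_{T^*},\sD_T]$) differ by an operator whose range lies in $\ker D_{T^*} \cap \sD_{T^*} = \{0\}$ (resp.\ $\ker D_T \cap \sD_T = \{0\}$). This uniqueness simultaneously justifies the explicit Moore--Penrose formulas $L_T = D_{T^*}^{[-1]}TJ_1 D_T \uphar \sD_T$ and $L_{T^*} = D_T^{[-1]}T^*J_2 D_{T^*}\uphar \sD_{T^*}$.

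The main obstacle I expect is purely algebraic: one must find precisely the manipulation that recasts $TJ_1D_T^2 J_1 T^*$ as a product $HS$ with $H = D_{T^*}^2 \ge 0$ selfadjoint and $HS$ itself selfadjoint, in order to trigger Lemma~\ref{2norms}. Once the correct bookkeeping with the signatures $J_1, J_2, J_T, J_{T^*}$ is in place, the combination of Heinz's inequality (packaged in Lemma~\ref{2norms}) with Douglas' factorization gives everything at once, with no analytic difficulty remaining.
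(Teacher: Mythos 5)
Your argument is correct and is essentially the paper's own proof: both establish that $TJ_1D_T^2J_1T^*$ can be written as a product of $D_{T^*}^2$ with a bounded operator built from the signature operators (your $N^*=J_{T^*}J_2TJ_TJ_1T^*$ is exactly the operator $S$ in the paper), invoke Lemma~\ref{2norms} to get the bound $(TJ_1D_T)(TJ_1D_T)^*\le \mu D_{T^*}^2$, and then apply Douglas' factorization, with $L_{T^*}$ obtained by symmetry. The only cosmetic difference is that you use the first identity of \eqref{eqC3} and write the product in the order $ND_{T^*}^2$ where the paper uses the second identity and the order $D_{T^*}^2S$; these are adjoints of one another, so the proofs coincide.
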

\begin{proof}
Denote $S=J_{T^*}J_2TJ_TJ_1T^*$. Then \eqref{eqC3} implies that
\[
 D_{T^*}^2 S=(J_2-TJ_1T^*)J_2TJ_TJ_1T^*
  =TJ_1(J_1-T^*J_2T)J_TJ_1T^*
  =TJ_1D_T^2J_1T^*\ge 0,
\]
so that $D_{T^*}^2 S$ is nonnegative and, in particular,
selfadjoint. By Lemma~\ref{2norms} with $\mu=\|S\|$ one has
\[
 0\le TJ_1D_T^2J_1T^*=D_{T^*}^2 S \le \mu D_{T^*}^2.
\]
This last inequality is equivalent to the factorization
$TJ_1D_T\uphar \sD_T=D_{T^*} L_T$ with a unique operator $L_T\in
[\sD_T,\sD_{T^*}]$, see \cite[Theorem~1]{Douglas}, which by means of
Moore-Penrose generalized inverse can be rewritten as indicated.

The second formula is obtained by applying the first one to $T^*$.
\end{proof}

The following identities can be obtained with direct calculations;
see \cite[Section~4]{ACG87}:
\begin{equation}\label{Link2}
\begin{array}{c}
  L_T^* J_{T^*}\uphar \sD_{T^*}=J_{T}L_{T^*};\\
  (J_T-D_TJ_1D_T)\uphar \sD_T=L_T^*J_{T^*}L_T;\\
  (J_{T^*}-D_{T^*}J_2D_{T^*})\uphar \sD_{T^*}=L_{T^*}^* J_T L_{T^*}.
\end{array}
\end{equation}

The next corollary contains the promised identity $\wt\kappa_1=\kappa_1$
under the assumption $\wt\kappa_2=\kappa_2-\nu_-(J_2^\prime)\ge 0$ in Lemma~\ref{corL1}.
Similarly $\wt\kappa_1=\kappa_1-\nu_-(J_1^\prime)$ implies $\wt\kappa_2=\kappa_2$;
the general result for the first case can be formulated as follows (and there is similar
result for the latter case).

\begin{corollary}\label{k2cor}
Let $R$ be a bounded operator such that $\ran R\subset \ran D_{T^*}$ and
let $T_r$ be the corresponding row operator and denote $\wt\kappa_1=\nu_-(\wt J_1-T_r^*J_2T_r)$.
Then $R=D_{T^*}B$ for a (unique) bounded operator $B\in[\sH'_1,\sD_{T^*}]$ and
\[
 \wt\kappa_1=\kappa_1+\nu_-(J_1^\prime- B^*J_{T^*} B).
\]
In particular, $J$-contractivity of $B$ is equivalent to $\wt\kappa_1=\kappa_1$.
\end{corollary}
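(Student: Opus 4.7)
The plan is to apply the generalized Sylvester/Schur criterion (Proposition~\ref{sylvester}) directly to the $2\times 2$ block operator $\wt J_1 - T_r^* J_2 T_r$ decomposed along $\sH_1\oplus\sH'_1$. Uniqueness and existence of $B\in[\sH'_1,\sD_{T^*}]$ with $R=D_{T^*}B$ come from Douglas' factorization \cite{Douglas} applied to $\ran R \subset \ran D_{T^*}$, with $B$ pinned down uniquely by the requirement that $\ran B\subset \sD_{T^*}=\cran D_{T^*}$.

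The key identifications are: $A_{11}=J_1 - T^*J_2T = J_T D_T^2$, so that $|A_{11}|^{1/2}=D_T$ and $\sgn(A_{11}) = J_T$ on $\sD_T$, giving $\nu_-(A_{11})=\kappa_1$; and for the off-diagonal entry $A_{12}=-T^*J_2R$, the link operator identity $T^*J_2 D_{T^*}\uphar \sD_{T^*} = D_T L_{T^*}$ from Corollary~\ref{linkoper} yields $A_{12}=-D_TL_{T^*}B$, whence the required range inclusion $\ran A_{12}\subset \ran D_T$ holds and $S:=|A_{11}|^{[-1/2]}A_{12} = -L_{T^*}B$. Next, the third identity in \eqref{Link2}, namely $D_{T^*}J_2 D_{T^*} = J_{T^*} - L_{T^*}^* J_T L_{T^*}$ on $\sD_{T^*}$, gives $R^*J_2R = B^*J_{T^*}B - B^*L_{T^*}^* J_T L_{T^*}B$, so the generalized Schur complement collapses to
\[
 A_{22}-S^*J_T S \;=\; J'_1 - R^*J_2R - B^*L_{T^*}^* J_T L_{T^*}B \;=\; J'_1 - B^*J_{T^*}B.
\]
Proposition~\ref{sylvester} then delivers the stated formula $\wt\kappa_1 = \kappa_1 + \nu_-(J'_1 - B^*J_{T^*}B)$, and the ``in particular'' clause is immediate since $J$-contractivity of $B$ is by definition $\nu_-(J'_1 - B^*J_{T^*}B)=0$.

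The main technical point, which I expect to be the only real obstacle, is verifying the range-inclusion hypothesis of Proposition~\ref{sylvester} and then simplifying the Schur complement without getting tangled in the Moore-Penrose inverse $|A_{11}|^{[-1/2]}=D_T^{[-1]}$; this works cleanly because $J_T$ and $D_T$ commute on $\sD_T$ and because the link operator $L_{T^*}$ already maps into $\sD_T$, so $D_T^{[-1]}D_T L_{T^*} = L_{T^*}$. The formula remains consistent in the case where $\nu_-(J'_1 - B^*J_{T^*}B)=\infty$, since then any sequence of $A_{22}-S^*J_TS$-negative subspaces of growing dimension lifts to $A$-negative subspaces, forcing $\wt\kappa_1=\infty$ as well.
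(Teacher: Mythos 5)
Your proposal is correct and follows essentially the same route as the paper: the paper's proof likewise writes $\wt J_1-T_r^*J_2T_r=J_{T_r}D_{T_r}^2$ as a $2\times 2$ block with off-diagonal entry $-D_TL_{T^*}B$ via Corollary~\ref{linkoper}, invokes Proposition~\ref{sylvester}, and collapses the generalized Schur complement to $J_1'-B^*J_{T^*}B$ (written there as $J_BD_B^2$) using the third identity in \eqref{Link2}. The only difference is bookkeeping in the $(2,2)$ entry, and your remarks on the range-inclusion hypothesis and the infinite case are consistent with the ``if and only if'' part of Proposition~\ref{sylvester}.
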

\begin{proof}
Recall that $\ran R\subset \ran D_{T^*}$ is equivalent to the
factorization $R=D_{T^*}B$. By applying the commutation relations in
Corollary~\ref{linkoper} together with the identities \eqref{Link2}
one gets the following expression for $ J_{T_r}D_{T_r}^2$:
\begin{equation}\label{Jtr}
\begin{array}{rl}
 J_{T_r}D_{T_r}^2  &
 = \begin{pmatrix} J_1-T^*J_2T & -T^*J_2D_{T^*}B  \\
  -B^*D_{T^*} J_2 T & J_1^\prime - B^*D_{T^*} J_2 D_{T^*}B\end{pmatrix} \\ &
  = \begin{pmatrix} J_TD_T^2 & -D_T L_{T^*}B  \\
  -B^*L^*_{T^*} D_T & J_BD_{B}^2 + B^*L^*_{T^*} J_T L_{T^*}B\end{pmatrix}.
\end{array}
\end{equation}
Now apply Proposition~\ref{sylvester} and calculate the Schur
complement, cf. \eqref{genSchur},
\[
 J_BD_{B}^2 + B^*L^*_{T^*} J_T L_{T^*}B
 -B^*L^*_{T^*} D_T (D_T^{[-1]}J_TD_T^{[-1]}) D_T L_{T^*}B
 =J_BD_{B}^2,
\]
to see that $\wt\kappa_1=\nu_-(J_1-T^*J_2T)+\nu_-(J_1^\prime- B^*J_{T^*} B)$.
\end{proof}

By means of Lemmas~\ref{corL1},~\ref{corL2} and the link operators
in Corollary~\ref{linkoper} one can now establish the main result
concerning the lifting problem $(*)$.

First notice that if Problem $(*)$ has a solution, then by treating
$\wt T$ as a row extension of its first column $T_c$ and as a column
extension of its first row $T_r$ one gets from the inequalities
preceding Lemmas~\ref{corL1},~\ref{corL2} the estimates
\begin{equation}\label{iner03}
\begin{array}{l}
 \wt\kappa_1\ge \kappa_1(T_r)-\nu_-(J_2^\prime)\ge \kappa_1-\nu_-(J_2^\prime); \\
 \wt\kappa_2\ge \kappa_2(T_c)-\nu_-(J_1^\prime)\ge \kappa_2-\nu_-(J_1^\prime).
\end{array}
\end{equation}
Under the minimal choice of the indices $\wt\kappa_1$ and
$\wt\kappa_2$ Problem $(*)$ is already solvable; all
solutions are described by the following result, which was initially
proved in \cite[Theorem~2.3]{CG89} with the aid of \cite[Theorem~5.3]{ACG87}.
Here a different proof is presented, again based on an application of Theorem~\ref{T:1}.

\begin{theorem}\label{Lifthm}
Let $\wt T$ be a bounded operator from
$(\sH_1\oplus\sH_1^\prime,J_1\oplus J_1^\prime)$ to
$(\sH_2\oplus\sH_2^\prime,J_2\oplus J_2^\prime)$ such that $P_2 \wt
T\uphar \sH_1 = T$. Assume that $0\leq \kappa_1 -
\nu_-(J_2^\prime)=\wt\kappa_1<\infty$ and $0\leq \kappa_2 -
\nu_-(J_1^\prime)=\wt\kappa_2<\infty$. Then the Problem $(*)$ is
solvable and the formula
\[
 \wt T=\begin{pmatrix}T & D_{T^*} \Gamma_1 \\
 \Gamma_2 D_T & -\Gamma_2 L_T^* J_{T^*} \Gamma_1 + D_{\Gamma_2^*}\Gamma D_{\Gamma_1}\end{pmatrix}
\]
establishes a one-to-one correspondence between the set of all
solutions to Problem $(*)$ and the set of triplets
$\{\Gamma_1,\Gamma_2,\Gamma\}$ where
$\Gamma_1\in[\sH'_1,\sD_{T^*}]$ and $\Gamma_2^*\in[\sH'_2,\sD_T]$ are $J$-contractions
and $\Gamma\in[\sD_{\Gamma_1},\sD_{\Gamma_2^*}]$ is a Hilbert space contraction.
\end{theorem}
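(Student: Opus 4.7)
The plan is to perform a two-stage reduction. In the first stage I invoke Lemmas~\ref{corL1} and~\ref{corL2} to parametrize the first column and first row of $\wt T$; in the second I apply Theorem~\ref{T:1} (via Proposition~\ref{sylvester}) to the residual completion problem for the corner entry $X$, whose constraint I will unfold using the link operators of Corollary~\ref{linkoper} and the identities~\eqref{Link2}. Assume $\wt T$ solves $(*)$ and set $T_c=\wt T\uphar\sH_1$, $T_r=P_2\wt T$. The $(1,1)$-block of $\wt J_1-\wt T^*\wt J_2\wt T$ on $\sH_1$ equals $J_1-T_c^*\wt J_2 T_c$, whose negative index is at most $\wt\kappa_1$; combined with the column-extension lower bound preceding Lemma~\ref{corL1}, the hypothesis $\wt\kappa_1=\kappa_1-\nu_-(J_2^\prime)$ forces equality, so Lemma~\ref{corL1} will yield a unique $J$-contractive $\Gamma_2^*\in[\sH_2^\prime,\sD_T]$ with $P_{\sH_2^\prime}\wt T\uphar\sH_1=\Gamma_2 D_T$. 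The symmetric argument through Lemma~\ref{corL2} produces a unique $J$-contractive $\Gamma_1\in[\sH_1^\prime,\sD_{T^*}]$ with $P_{\sH_2}\wt T\uphar\sH_1^\prime=D_{T^*}\Gamma_1$.

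With $\Gamma_1,\Gamma_2$ so fixed, the only remaining freedom is the corner $X\in[\sH_1^\prime,\sH_2^\prime]$, and I next expand $H:=\wt J_1-\wt T^*\wt J_2\wt T$ as a $2\times 2$ block on $\sH_1\oplus\sH_1^\prime$. Its $(1,1)$ block will be $D_T M D_T$ with $M=J_T-\Gamma_2^* J_2^\prime\Gamma_2$, and by Lemma~\ref{inertia} applied to the $J$-contractive $\Gamma_2^*$ one gets $\nu_-(M)=\kappa_1-\nu_-(J_2^\prime)=\wt\kappa_1$; the $(1,2)$ block will simplify, via $T^*J_2D_{T^*}=D_TL_{T^*}$ from Corollary~\ref{linkoper}, to $-D_T(L_{T^*}\Gamma_1+\Gamma_2^*J_2^\prime X)$, and the $(2,2)$ block becomes $J_{\Gamma_1}D_{\Gamma_1}^2+\Gamma_1^*L_{T^*}^*J_TL_{T^*}\Gamma_1-X^*J_2^\prime X$ using~\eqref{Link2}. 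Since the $(1,1)$ block already attains negative index $\wt\kappa_1$, the requirement $\nu_-(H)=\wt\kappa_1$ is precisely the equality case in Proposition~\ref{sylvester}: equivalent to the range inclusion $\ran H_{12}\subset\ran|H_{11}|^{1/2}$ together with nonnegativity of the generalized Schur complement. Using $L_T^*J_{T^*}=J_TL_{T^*}$ and the remaining identities of~\eqref{Link2} to reorganise this complement, I expect it to be quadratic in $X$ with unique centre $X_{\min}=-\Gamma_2 L_T^*J_{T^*}\Gamma_1$; setting $X=X_{\min}+Z$, the residual nonnegativity should become an inequality sandwiched between $D_{\Gamma_1}$ and $D_{\Gamma_2^*}$, equivalent, by the Douglas-type factorization of Proposition~\ref{BKcor1} (with $J=I$), to $Z=D_{\Gamma_2^*}\Gamma D_{\Gamma_1}$ for a unique Hilbert-space contraction $\Gamma\in[\sD_{\Gamma_1},\sD_{\Gamma_2^*}]$.

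For the converse direction, inserting any triple $(\Gamma_1,\Gamma_2,\Gamma)$ of the stated type in the displayed formula for $\wt T$ and running the above computation backwards verifies $\nu_-(\wt J_1-\wt T^*\wt J_2\wt T)=\wt\kappa_1$. The companion identity $\nu_-(\wt J_2-\wt T\wt J_1\wt T^*)=\wt\kappa_2$ will then follow automatically from Lemma~\ref{inertia} applied to $\wt T$ together with $\nu_-(\wt J_i)=\nu_-(J_i)+\nu_-(J_i^\prime)$ and~\eqref{iner01}, so I do not have to verify it directly. Uniqueness of the triple is inherited from the respective factorizations, since $D_T$, $D_{T^*}$, $D_{\Gamma_1}$, and $D_{\Gamma_2^*}$ all have dense range in the appropriate defect subspace. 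The hard part will be the algebraic reorganisation inside the Schur-complement calculation: identifying $-\Gamma_2 L_T^*J_{T^*}\Gamma_1$ as the correct canonical centre and refactoring the residue cleanly through the defect operators $D_{\Gamma_1}$ and $D_{\Gamma_2^*}$ both rely delicately on the link-operator identities~\eqref{Link2}; the remainder reduces to routine bookkeeping via Theorem~\ref{T:1}, Lemma~\ref{inertia}, and Lemmas~\ref{corL1}--\ref{corL2}.
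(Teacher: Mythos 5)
Your first stage is sound and in fact slightly more direct than the paper's: bounding $\nu_-(J_1-T_c^*\wt J_2T_c)$ above by $\wt\kappa_1$ through compression and below by $\kappa_1-\nu_-(J_2^\prime)$ forces the equality needed for Lemma~\ref{corL1}, and dually for the row; this replaces the paper's detour through \eqref{iner03} and Corollary~\ref{k2cor}. The block computation of $\wt J_1-\wt T^*\wt J_2\wt T$ over $\sH_1\oplus\sH_1^\prime$ is also correct as far as you take it. The gap is that you stop exactly where the proof actually lives. The $(1,1)$ corner $H_{11}$ of your $H$ is $D_T M D_T$ with $M=J_T-\Gamma_2^*J_2^\prime\Gamma_2$, and the generalized Schur complement of Proposition~\ref{sylvester} is built from $|D_TMD_T|^{[-1/2]}$, which has no direct expression in terms of $D_T$ and $M$; extracting your ``quadratic in $X$ with unique centre $-\Gamma_2L_T^*J_{T^*}\Gamma_1$'' requires identifying $|H_{11}|^{1/2}$ with $|M|^{1/2}D_T$ up to a $J$-unitary factor (an application of Proposition~\ref{BKcor1}\,(ii) to the factorization $H_{11}=(|M|^{1/2}D_T)^*J_M(|M|^{1/2}D_T)$), and then a further two-sided Douglas-type argument to land on $Z=D_{\Gamma_2^*}\Gamma D_{\Gamma_1}$. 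None of this is carried out; it is precisely the step the paper handles by a different and cleaner device, namely the auxiliary completion problem for $\bigl(\begin{smallmatrix} J_{T_r}D_{T_r}^2 & T_{r,2}^*\\ T_{r,2} & J_2^\prime\end{smallmatrix}\bigr)$ --- whose $(2,2)$ corner $J_2^\prime$ is boundedly invertible and whose $(1,1)$ corner has the canonical square root $D_{T_r}$ --- followed by the $J$-unitary identification $B=UD_{T_r}$ coming from \eqref{Jtr}. Your route is probably completable, but as written the central computation is an announcement, not a proof.

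Second, your claim that $\nu_-(\wt J_2-\wt T\wt J_1\wt T^*)=\wt\kappa_2$ ``follows automatically from Lemma~\ref{inertia} together with \eqref{iner01}'' is false in the generality of the theorem: only $\nu_-(J_1^\prime)$ and $\nu_-(J_2^\prime)$ are assumed finite, so when $\nu_-(J_1)=\nu_-(J_2)=\infty$ the inertia identity degenerates to $\infty=\infty$ and the subtractions you perform are meaningless. The paper flags exactly this pitfall in the remark following Lemma~\ref{corL1} and circumvents it with the direct Schur-complement computation of Corollary~\ref{k2cor} (equivalently, by running the whole argument on $\wt T^*$). You need one of these substitutes; the inertia arithmetic alone does not close the second index condition.
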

\begin{proof}
Assume that there is a solution $\wt T$ to Problem $(*)$ and write it in the form
\[
 \wt T=\begin{pmatrix} T & R \\ C & X \end{pmatrix}
\]
with the first column denoted by $T_c$ and first row denoted by
$T_r$, and assume that $\wt\kappa_1=\kappa_1 - \nu_-(J_2^\prime)$
and $\wt\kappa_2= \kappa_2 - \nu_-(J_1^\prime)$. Then \eqref{iner03}
shows that $\kappa_1=\kappa_1(T_r)$ and $\kappa_2=\kappa_2(T_c)$.
Hence Lemma~\ref{corL2} can be applied by viewing $\wt T$ as a row
extension of $T_c$ to get a range inclusion and then from
Corollary~\ref{k2cor} one gets the equality
$\wt\kappa_1=\kappa_1(T_c)$. Similarly applying Lemma~\ref{corL1}
and the analog of Corollary~\ref{k2cor} to column operator $\wt T$
one gets the equality $\wt\kappa_2=\kappa_2(T_r)$. Thus
$\kappa_1(T_c)=\kappa_1 - \nu_-(J_2^\prime)$ and
$\kappa_2(T_r)=\kappa_2 -\nu_-(J_1^\prime)$. Consequently, one can
apply Lemma~\ref{corL1} to the first column $T_c$ and
Lemma~\ref{corL2} to the first row $T_r$ to get the stated
factorizations $C=\Gamma_2 D_T$ and $R=D_{T^*}\Gamma_1$ with unique
$J$-contractions $\Gamma_1$ and $\Gamma_2^*$.

To establish a formula for $X$ we proceed by considering the block
operator
\[
 H:=\begin{pmatrix} J_{T_r}D_{T_r}^2 & T_{r,2}^* \\ T_{r,2} & J_2^\prime \end{pmatrix},
\]
where $T_{r,2}$ denotes the second row of $\wt T$. It is
straightforward to derive the following formula for the Schur
complement
\[
 J_{T_r}D_{T_r}^2-T_{r,2}^* J_2^\prime T_{r,2}=\wt J_1-\wt T^*\wt J_2 \wt T.
\]
Thus
$\nu_-(H)=\wt\kappa_1+\nu_-(J_2^\prime)=\kappa_1=\nu_-(J_{T_r})$ and
one can apply Theorem~\ref{T:1} to get the factorization
$T_{r,2}^*=D_{T_r} \wt K$ with a unique $\wt
K\in[\sH_2^\prime,\sD_{T_r}]$ satisfying $\wt K^* J_{T_r} \wt K \le
J_2^\prime$, i.e., $\wt K$ is a $J$-contraction; see
Theorem~\ref{thmB}.

It follows from \eqref{Jtr} that
\[
 J_{T_r}D_{T_r}^2
 = \begin{pmatrix} D_T & 0 \\ -\Gamma_1^* L^*_{T^*}J_T  & D_{\Gamma_1}\end{pmatrix}
     \begin{pmatrix} J_T & 0 \\ 0 & I_{D_{\Gamma_1}} \end{pmatrix}
     \begin{pmatrix} D_T & -J_T L_{T^*}\Gamma_1 \\ 0  & D_{\Gamma_1}\end{pmatrix}
     =:B^*\wh J B.
\]
Since here $\nu_-(J_{T_r})=\kappa_1=\nu_-(J_{T})$ and $B$ is a
triangular operator whose range is dense in $\sD_T\oplus
\sD_{\Gamma_1}$ (the diagonal entries $D_T$ and $D_{\Gamma_1}$ of
$B$ have dense ranges by definition), there is a unique Pontryagin
space $J$-unitary operator $U$ from $\sD_{T_r}$ onto $\sD_T\oplus
\sD_{\Gamma_1}$ such that $B=UD_{T_r}$; see Proposition~\ref{BKcor1}
(ii). It follows that $K^*:=(U^{-1})^*\wt K$ is a $J$-contraction
from $\sH_2^\prime$ to $\sD_T\oplus \sD_{\Gamma_1}$ and $KB=\wt K^*
D_{T_r}=T_{r,2}$. Now $J_2^\prime-K\wh J K^*\ge 0$ gives
\begin{equation}\label{eqKK}
 0\le K_1K_1^*\le J_2^\prime-K_0 J_T K_0^*, 
\end{equation}
where $K=(K_0\,K_1)$ is considered as a row operator, and
$T_{r,2}=KB$ reads as
\[
 \Gamma_2D_{T}=K_0 D_T, \quad X=-K_0 J_T L_{T^*}\Gamma_1 + K_1 D_{\Gamma_1}.
\]
Since all contractions that are involved are unique, $K_0=\Gamma_2$,
$J_2^\prime-K_0 J_T K_0^* = D_{\Gamma_2^*}^2$, and \eqref{eqKK}
implies that there is a unique Hilbert space contraction
$\Gamma\in[\sD_{\Gamma_1},\sD_{\Gamma_2^*}]$ such that
$K_1=D_{\Gamma_2^*}\Gamma$. The desired formula for $\wt T$ is
proven (cf. \eqref{Link2}). It is clear from the proof that every
operator $\wt T$ of the stated form is a solution and that there is
one-to-one correspondence via the triplets
$\{\Gamma_1,\Gamma_2,\Gamma\}$ of $J$-contractions.
\end{proof}

\begin{remark}
(i) By replacing $\wt T$ with its adjoint $\wt T^*$ it is clear that
all formulas remain the same and are obtained by changing $T$ with
$T^*$ and interchanging the roles of the indices $1$ and $2$; see
also \eqref{Link2}. This connects the considerations with row and
column operators to each other.

(ii) If $\kappa_1=0$ so that $J_1-T^*J_2T\ge 0$, then the above
proof becomes slightly simpler since then $J_{T_r}$, $J_T$, and
$J_2^\prime$ are identity operators and $\wt K$ is a Hilbert space
contraction. Then Theorem~\ref{Lifthm} gives all contractive
liftings of a contraction in a Kre\u{\i}n space. If in addition
$\kappa_2=0$, then one gets all bicontractive liftings of a
bicontraction in a Kre\u{\i}n space with Pontryagin spaces as exit
spaces. In the case special case that the exit spaces are Hilbert
spaces ($\nu_-(J_1)=\nu_-(J_2)=0$ and $\kappa_1=\kappa_2=0$)
Theorem~\ref{Lifthm} coincides with \cite[Theorem~3.6]{Drit90}. In
fact, the present proof can be seen as a further development of the
proof appearing in that paper; see also further references and
historical remarks given in \cite{Drit90,DritRov90}.
\end{remark}

\section{Contractive extensions of contractions with minimal negative indices}
\label{sec4}

Let $\sH_1$ be a closed linear subspace of the Hilbert space $\sH$,
let $T_{11}=T_{11}^*\in[\sH_1]$ be an operator such that
$\nu_-(I-T_{11}^2)=\kappa<\infty$. Denote
\begin{equation}\label{JJpm}
 J=\sign(I-T_{11}^2), \quad
 J_+=\sign(I-T_{11}),\, \text{ and }\,
 J_-=\sign(I+T_{11}),
\end{equation}
and let $\kappa_+=\nu_-(I-T_{11})$ and $\kappa_-=\nu_-(I+T_{11})$.
It is obvious that $J=J_-J_+=J_+J_-$. Moreover, there is an equality
$\kappa=\kappa_- +\kappa_+$ as stated in the next lemma.

\begin{lemma}\label{sign}
Let $T=T^*\in[\sH_1]$ be an operator such that
$\nu_-(I-T^2)=\kappa<\infty$ then $\nu_-(I-T^2)=\nu_-(I+T)+\nu_-(I-T).$
\end{lemma}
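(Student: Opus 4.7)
The plan is to prove the identity by a direct application of the spectral theorem for the bounded selfadjoint operator $T$, together with the elementary fact that the negative index of a function of $T$ equals the rank of the spectral projection onto the set where that function is strictly negative.

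Let $T=\int_{-\|T\|}^{\|T\|} t\, dE_t$ be the spectral resolution. By the functional calculus,
\[
 I-T^2=\int (1-t^2)\,dE_t,\quad I-T=\int (1-t)\,dE_t,\quad I+T=\int (1+t)\,dE_t.
\]
For any bounded selfadjoint operator $H=\int f(t)\,dE_t$ obtained as a real bounded Borel function of $T$, the negative spectral subspace of $H$ coincides with $\ran E(\{t:f(t)<0\})$, so that $\nu_-(H)=\dim E(\{t:f(t)<0\})$. I would first record this basic fact.

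Applying it to the three operators above, the key observation is the disjoint decomposition
\[
 \{t:1-t^2<0\}=(-\infty,-1)\,\dot\cup\,(1,\infty),\qquad
 \{t:1-t<0\}=(1,\infty),\qquad \{t:1+t<0\}=(-\infty,-1).
\]
The values $t=\pm 1$ are irrelevant here since $1-t^2$, $1-t$, $1+t$ all vanish there and hence contribute only to the respective kernels, not the negative indices. Since the two sets $(-\infty,-1)$ and $(1,\infty)$ are disjoint Borel subsets of $\dR$, the projections $E((-\infty,-1))$ and $E((1,\infty))$ are mutually orthogonal and their sum equals $E((-\infty,-1)\cup(1,\infty))$. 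Taking dimensions gives
\[
 \nu_-(I-T^2)=\dim E((-\infty,-1))+\dim E((1,\infty))=\nu_-(I+T)+\nu_-(I-T).
\]
Since $\nu_-(I-T^2)=\kappa<\infty$ by assumption, both summands on the right are automatically finite, which incidentally justifies writing $\kappa_\pm=\nu_-(I\mp T)$ in the preceding paragraph of the paper.

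There is no real obstacle: the argument is a one-line spectral calculation once one invokes the elementary identification of the negative index of $f(T)$ with the rank of the spectral projection onto $\{f<0\}$. The only point requiring a moment of care is the boundary behavior at $t=\pm 1$, but there the three functions vanish simultaneously on one side of the factorization, so those spectral points contribute to $\nu_0$ rather than to $\nu_-$ and the additivity across the disjoint open sets $(-\infty,-1)$ and $(1,\infty)$ is exact.
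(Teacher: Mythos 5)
Your argument is correct and is essentially identical to the paper's own proof: both identify the negative spectral subspace of $I-T^2$ with $\ran E((-\infty,-1)\cup(1,\infty))$ via the spectral theorem and then split this disjoint union to match $\nu_-(I+T)$ and $\nu_-(I-T)$. Your extra remark about the boundary points $t=\pm 1$ contributing only to the kernel is a harmless elaboration of the same reasoning.
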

\begin{proof}
Let $E_t(\cdot)$ be resolution of identity of $T$. Then by the spectral mapping theorem
the spectral subspace corresponding to the negative spectrum of $I-T^2$ is
given by $E_t((\infty;-1)\cup(1;\infty))=E_t((-\infty;-1))\oplus E_t((1;\infty))$.
Consequently, $\nu_-(I-T^2)=\dim E_t((-\infty;-1))+\dim E_t((1;\infty))=\nu_-(I+T)+\nu_-(I-T)$.
\end{proof}

The next problem concerns the existence and a description of
selfadjoint operators $T$ such that $\wt A_+=I+T$ and $\wt A_-=I-T$
solve the corresponding completion problems
\begin{equation}\label{E:A}
A_{\pm}^0=
\begin{pmatrix}
 I\pm T_{11}&\pm T_{21}^*\\
 \pm T_{21}&\ast
\end{pmatrix},
\end{equation}
under \emph{minimal index conditions} $\nu_-(I+T)=\nu_-(I+T_{11})$,
$\nu_-(I-T)=\nu_-(I-T_{11})$, respectively. Observe, that if $I\pm
T$ provides an arbitrary completion to $A_{\pm}^0$ then clearly
$\nu_-(I \pm T)\ge \nu_-(I \pm T_{11})$. Thus by Lemma~\ref{sign}
the two minimal index conditions above are equivalent to the single
condition $\nu_-(I-T^2)=\nu_-(I-T_{11}^2)$.

Unlike with the case of a selfadjoint contraction $T_{11}$, this
problem need not have solutions when $\nu_-(I-T_{11}^2)>0$. It is
clear from Theorem~\ref{T:1} that the conditions $\ran
T_{21}^*\subset \ran|I-T_{11}|^{1/2}$ and $\ran T_{21}^*\subset
\ran|I+T_{11}|^{1/2}$ are necessary for the existence of solutions;
however alone they are not sufficient.

The next theorem gives a general solvability criterion for the
completion problem \eqref{E:A} and describes all solutions to this
problem. As in the definite case, there are minimal solutions $A_+$
and $A_-$ which are connected to two extreme selfadjoint extensions
$T$ of
\begin{equation}\label{Tcol}
T_1=\begin{pmatrix} T_{11}\\ T_{21}
\end{pmatrix}:\sH_1\to\begin{pmatrix}\sH_1\\\sH_2\end{pmatrix},
\end{equation}
now with finite negative index $\nu_-(I-T^2)=\nu_-(I-T_{11}^2)>0$.
The set of all solutions $T$ to the problem \eqref{E:A} will be
denoted by $\Ext_{T_1,\kappa}(-1,1)$.

\begin{theorem}\label{T:contr}
Let $T_1$ be a symmetric operator as in \eqref{Tcol} with
$T_{11}=T_{11}^*\in[\sH_1]$ and $\nu_-(I-T_{11}^2)=\kappa<\infty$,
and let $J=\sign(I-T_{11}^2)$. Then the completion problem for
$A_{\pm}^0$ in \eqref{E:A} has a solution $I\pm T$ for some $T=T^*$
with $\nu_-(I-T^2)=\kappa$ if and only if the following condition is
satisfied:
\begin{equation}\label{crit}
 \nu_-(I-T_{11}^2)=\nu_-(I-T_1^*T_1).
\end{equation}
If this condition is satisfied then the following facts hold:
\begin{enumerate}\def\labelenumi{\rm(\roman{enumi})}
\item The completion problems for $A_{\pm}^0$ in
\eqref{E:A} have minimal solutions $A_\pm$.

\item The operators $T_m:=A_+-I$ and $T_M:=I-A_-\in
\Ext_{T_1,\kappa}(-1,1)$.

\item The operators $T_m$ and $T_M$ have the block form
\begin{equation}\label{E:T}
T_m=
\begin{pmatrix}
 T_{11}&D_{T_{11}}V^*\\
 VD_{T_{11}}&-I+V(I-T_{11})JV^*
\end{pmatrix},\
T_M=
\begin{pmatrix}
 T_{11}&D_{T_{11}}V^*\\
 VD_{T_{11}}&I-V(I+T_{11})JV^*
\end{pmatrix},
\end{equation}
where $D_{T_{11}}:=|I-T_{11}^2|^{1/2}$ and $V$ is given by
$V:=\clos(T_{21}D_{T_{11}}^{[-1]})$.

\item The operators $T_m$ and $T_M$ are extremal extensions of $T_1$:
\begin{equation}\label{E:Ext}
T\in\Ext_{T_1,\kappa}(-1,1)\  \text{  iff  }\  T=T^*\in[\sH],\quad
T_m\leq T\leq T_M.
\end{equation}

\item The operators $T_m$ and $T_M$ are connected via
\begin{equation}\label{E:5}
(-T)_m=-T_M, \quad (-T)_M=-T_m.
\end{equation}
\end{enumerate}
\end{theorem}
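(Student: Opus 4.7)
The plan is to apply Theorem~\ref{T:1} (and its consequence Theorem~\ref{thmB}) to the two completion problems $A_\pm^0$ in \eqref{E:A}, exploiting Lemma~\ref{sign} to convert the single condition $\nu_-(I-T^2)=\kappa$ into the simultaneous minimal-index solvability of both. For any selfadjoint completion $T$ of $T_1$, one has $\nu_-(I\pm T) \ge \nu_-(I\pm T_{11})$; hence by Lemma~\ref{sign}, $\nu_-(I-T^2)=\kappa$ if and only if $I+T$ and $I-T$ solve $A_+^0$ and $A_-^0$, respectively, with their minimal negative indices $\kappa_-$ and $\kappa_+$. Necessity of \eqref{crit} follows via a compression argument: with $\iota:\sH_1\hookrightarrow\sH$ the inclusion, direct computation gives $\iota^*(I-T^2)\iota = I-T_1^*T_1$. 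Monotonicity of negative index under compression yields $\nu_-(I-T_1^*T_1)\le\kappa$, while $T_{21}^*T_{21}\ge 0$ gives $\nu_-(I-T_1^*T_1)\ge\nu_-(I-T_{11}^2)=\kappa$, so equality holds.

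For sufficiency, assume \eqref{crit} and apply Theorem~\ref{thmB} with $A=I-T_{11}^2$, $B=T_{21}$, $J_2=I$; the required identity \eqref{min} reads precisely \eqref{crit}, so the conclusion delivers a unique $J$-contractive factorization $T_{21}^*=D_{T_{11}}K$ with $K\in[\sH_2,\sD_{T_{11}}]$ and $K^*JK\le I$. Setting $V:=K^*$, one recovers $V=\clos(T_{21}D_{T_{11}}^{[-1]})$. Using $D_{T_{11}}=|I-T_{11}|^{1/2}|I+T_{11}|^{1/2}$ (commuting factors), the range inclusions required by Theorem~\ref{T:1} applied to each of $A_\pm^0$ are satisfied, and the minimal completions $S_\pm^*J_\mp S_\pm$ reduce, after canceling common square roots using $\ran K\subset\sD_{T_{11}}\subset\cran(I\pm T_{11})$, to $S_+=|I-T_{11}|^{1/2}K$ and $S_-=-|I+T_{11}|^{1/2}K$. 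The key algebraic step is to combine the polar identities $J_\pm|I\pm T_{11}|=I\pm T_{11}$ with $J=J_+J_-$ to obtain $J_\mp|I\pm T_{11}|=J(I\pm T_{11})$. This yields
\[
 I+T_{22}^m = V(I-T_{11})JV^*, \qquad I-T_{22}^M = V(I+T_{11})JV^*,
\]
establishing (i), (ii), and (iii).

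For (iv), the compatibility $T_m\le T_M$, i.e., $T_{22}^m\le T_{22}^M$, reduces via $(I-T_{11})+(I+T_{11})=2I$ to $VJV^*\le I$, which is exactly the $J$-contractivity of $K=V^*$ provided by Theorem~\ref{thmB}, so the operator interval is nonempty. Since $T_M-T_m$ is concentrated in the $(2,2)$-block, any selfadjoint $T$ with $T_m\le T\le T_M$ must agree with $T_m$ on the first column and row (standard fact that $0\le H\le \diag(0,Z)$ forces $H$ to have the same block form), hence extends $T_1$; by Theorem~\ref{T:1}(ii) the resulting $I\pm T$ are minimal-index completions of $A_\pm^0$, so $T\in\Ext_{T_1,\kappa}(-1,1)$. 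Conversely, the minimality property in Theorem~\ref{T:1}(ii) forces every $T\in\Ext_{T_1,\kappa}(-1,1)$ to satisfy $T_m\le T\le T_M$. Finally, (v) is immediate by symmetry: replacing $T_1$ with $-T_1$ swaps $A_+^0$ and $A_-^0$, interchanging the minimal and maximal solutions. The main technical hurdle lies in the algebraic simplifications of the middle paragraph, which require careful tracking of the Moore-Penrose inverse on $\sD_{T_{11}}$ and of the commuting spectral functions of $T_{11}$ to absorb both signatures $J_+$ and $J_-$ into the single signature $J$ appearing in the final formulas.
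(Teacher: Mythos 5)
Your proposal is correct and follows essentially the same route as the paper's proof: necessity via the chain $\nu_-(I-T_{11}^2)\le\nu_-(I-T_1^*T_1)\le\nu_-(I-T^2)$, sufficiency and the formulas \eqref{E:T} via Theorem~\ref{T:1}/Theorem~\ref{thmB} applied to the two completion problems with the factorization $T_{21}^*=D_{T_{11}}V^*$, the $J$-contractivity of $V^*$ yielding $T_M-T_m=\diag(0,2(I-VJV^*))\ge 0$, and the interval description from the minimality statement of Theorem~\ref{T:1}(ii). Your labeling of $J_\pm$ and $S_\pm$ is the mirror image of the paper's but internally consistent, and you in fact make explicit a point the paper leaves tacit, namely that $T_m\le T\le T_M$ forces $T\supset T_1$ because $T_M-T_m$ is supported in the $(2,2)$-corner.
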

\begin{proof}
It is easy to see that $\kappa=\nu_-(I-T_{11}^2)\le
\nu_-(I-T_1^*T_1)\le \nu_-(I-T^2)$. Hence the condition
$\nu_-(I-T^2)=\kappa$ implies \eqref{crit}. The sufficiency of this
condition is established while proving the assertions (i)--(iii)
below.

(i) If the condition \eqref{crit} is satisfied then $\ran
T_{21}^*\subset \ran|I- T_{11}^2|^{1/2}$ by Lemma~\ref{corL1}. In
fact, this inclusion is equivalent to the inclusions $\ran
T_{21}^*\subset \ran|I\pm T_{11}|^{1/2}$, which by Theorem~\ref{T:1}
means that both of the completion problems, $A_{\pm}^0$ in
\eqref{E:A}, are solvable. Consequently, the following operators
\begin{equation}\label{E:S pm}
S_-=|I+T_{11}|^{[-1/2]}T_{21}^*,\quad
S_+=|I-T_{11}|^{[-1/2]}T_{21}^*
\end{equation}
are well defined and they provide the minimal solutions $A_\pm$ to
the completion problems for $A_\pm^0$ in \eqref{E:A}. Notice that the
assumption that there is a simultaneous solution $I\pm T$
with a \textit{single} selfadjoint operator $T$ is not yet used here.

(ii) \& (iii) Proof of (i) shows that the inclusion $\ran
T_{21}^*\subset \ran|I- T_{11}^2|^{1/2}$ holds. This last inclusion
alone is equivalent to the existence of a (unique) bounded operator
$V^*=D_{T_{11}}^{[-1]}T_{21}^*$ with $\ker V\supset \ker
D_{T_{11}}$, such that $T_{21}^*=D_{T_{11}}V^*$. The operators
$T_m:=A_+-I$ and $T_M:=I-A_-$ (see proof of (i)) can be now
rewritten as in \eqref{E:T}. Observe that
$$
S_\mp=|I\pm T_{11}|^{[-1/2]}D_{T_{11}}V^*=P_\mp|I\mp
T_{11}|^{1/2}V^*=|I\mp T_{11}|^{1/2}P_\mp V^*,
$$
where $P_\mp$ are the orthogonal projections onto
$$
(\ker|I\pm T_{11}|^{1/2})^\p=(\ker|I\pm T_{11}|)^\p=\ov{\ran}|I\pm
T_{11}|=\ov{\ran}|I\pm T_{11}|^{1/2}.
$$
Since $\ker V\supset \ker D_{T_{11}}$ implies $\ov{\ran}V^*\subset
\ov \ran D_{T_{11}}\subset \ov{\ran}|I\pm T_{11}|^{1/2}$, it follows
that
$$
S_-=|I-T_{11}|^{1/2}V^*,\quad S_+=|I+T_{11}|^{1/2}V^*.
$$
Consequently, see \eqref{JJpm},
$$
S_-^*J_-S_-=V|I-T_{11}|^{1/2}J_-|I-T_{11}|^{1/2}V^*=V(I-T_{11})JV^*,
$$
$$
S_+^*J_+S_+=V|I+T_{11}|^{1/2}J_+|I+T_{11}|^{1/2}V^*=V(I+T_{11})JV^*,
$$
which implies the representations for $T_m$ and $T_M$ in
\eqref{E:T}. Clearly, $T_m$ and $T_M$ are selfadjoint extensions of
$T_1$, which satisfy the equalities
$$
\nu_-(I+T_{m})=\kappa_-,\quad \nu_-(I-T_{M})=\kappa_+.
$$
Moreover, it follows from \eqref{E:T} that
\begin{equation}\label{E:6}
T_M-T_m=
\begin{pmatrix}
 0&0\\
 0&2(I-VJV^*)
\end{pmatrix}.
\end{equation}

Now the assumption \eqref{crit} will be used again. Since
$\nu_-(I-T_{1}^*T_{1})=\nu_-(I-T_{11}^2)$ and $T_{21}=VD_{T_{11}}$
it follows from Lemma~\ref{corL1} that $V^*\in[\sH_2,\sD_{T_{11}}]$
is $J$-contractive: $I-VJV^*\ge 0$. Therefore, \eqref{E:6} shows
that $T_M\geq T_m$ and $I+T_M\geq I+T_m$ and hence, in addition to
$I+T_m$, also $I+T_M$ is a solution to the problem $A_{+}^0$ and, in
particular, $\nu_-(I+T_M)=\kappa_-=\nu_-(I+T_m)$. Similarly,
$I-T_M\le I-T_m$ which implies that $I-T_m$ is also a solution to
the problem $A_{-}^0$, in particular,
$\nu_-(I-T_m)=\kappa_+=\nu_-(I-T_M)$. Now by applying Lemma
\ref{sign} we get
$$
\nu_-(I-T_m^2)=\nu_-(I-T_m)+\nu_-(I+T_m)=\kappa_++\kappa_-=\kappa,
$$
$$
\nu_-(I-T_M^2)=\nu_-(I-T_M)+\nu_-(I+T_M)=\kappa_++\kappa_-=\kappa.
$$
Therefore, $T_m,T_M\in \Ext_{T_1,\kappa}(-1,1)$ which in particular
proves that the condition \eqref{crit} is sufficient for solvability
of the completion problem \eqref{E:A}.

(iv) Observe, that $T\in\Ext_{T_1,\kappa}(-1,1)$ if and only if $T=T^*\supset T_1$
and $\nu_-(I\pm T)=\kappa_\mp$. By Theorem \ref{T:1} this is equivalent
to
\begin{equation}\label{E:7}
S_-^*J_-S_--I\leq T_{22}\leq I-S_+^*J_+S_+.
\end{equation}
The inequalities \eqref{E:7} are equivalent to \eqref{E:Ext}.

(v) The relations \eqref{E:5} follow from \eqref{E:S pm} and
\eqref{E:T}.
\end{proof}

For a Hilbert space contraction $T_1$ one has $\nu_-(I-T_{11}^2)\le
\nu_-(I-T_1^*T_1)=0$, i.e., the criterion \eqref{crit} is
automatically satisfied. In this case Theorem \ref{T:contr} has been
proved in \cite{HMS04}. As Theorem \ref{T:contr} shows, under the
minimal index condition $\nu_-(I-T^2)=\nu_-(I-T_{11}^2)$, the
solution set $\Ext_{T_1,\kappa}(-1,1)$ admits the same attractive
description as an operator interval determined by the two extreme
extensions $T_m$ and $T_M$ as was originally proved by M.G.
Kre\u{\i}n in his famous paper \cite{Kr47} when describing all
contractive selfadjoint extensions of a Hilbert space contraction.

In particular, Theorem \ref{T:contr} shows that if there is a solution to the completion problem \eqref{E:A},
i.e. if $T_1$ satisfies the index condition \eqref{crit}, then all selfadjoint extensions $T$
of $T_1$ satisfying the equality $\nu_-(I-T^2)= \nu_-(I-T_1^*T_1)$ are determined
by the operator inequalities $T_m\le T\le T_M$.

Notice that $T$ belongs to the solution set $\Ext_{T_1,\kappa}(-1,1)$
precisely when $T=T^*\supset T_1$ and $\nu_-(I\pm T)=\kappa_\mp$.
This means that every selfadjoint extension of $T_1$ for which $(I-T^2)=\nu_-(I-T_1^*T_1)$
admits precisely $\kappa_-$ eigenvalues on the interval $(-\infty,-1)$ and
$\kappa_+$ eigenvalues on the interval $(1,\infty)$; in total there are $\kappa=\kappa_- +\kappa_+$ eigenvalues outside the closed interval $[-1,1]$.
The fact that the numbers $\kappa_\mp=\nu_-(I\pm T)$ are constant in the solution
set $\Ext_{T_1,\kappa}(-1,1)$ is crucial for dealing properly with the Cayley transforms
in the next section.

\section{A generalization of M.G. Kre\u{\i}n's approach
to the extension theory of nonnegative operators}
\label{sec5}

\subsection{Some antitonicity theorems for selfadjoint
relations} 

The notion of inertia of a selfadjoint relation in a Hilbert space
is defined by means of its associated spectral measure. In what
follows the Hilbert space is assumed to be separable.

\begin{definition}
Let $H$ be a selfadjoint relation in a separable Hilbert space $\sH$
and let $E_t(\cdot)$ be the spectral measure of $H$. The inertia of
$H$ is defined as the ordered quadruplet
$\sfi(H)=\bigl\{\sfi^+(H),\sfi^-(H),\sfi^0(H),\sfi^\infty(H)\bigr\}$,
where
\begin{equation*}
\begin{split}
\sfi^+(H)&=\dim \ran E_t((0,\infty)),\qquad \sfi^-(H)=\dim \ran
E_t((-\infty,0)),\\
\sfi^0(H)&=\dim\ker H,\qquad\qquad\quad\, \sfi^\infty(H)=\dim \mul H.
\end{split}
\end{equation*}
\end{definition}
In particular, for a selfadjoint relation $H$ in $\dC^n$, the
quadruplet $\sfi(H)$ consists of the numbers of positive, negative,
zero, and infinite eigenvalues of $H$; cf. \cite{BHSW2014}. Hence,
if $H$ is a selfadjoint matrix in $\dC^n$, then ${\sf
i}^\infty(H)=0$ and the remaining numbers make up the usual inertia
of $H$.

The following theorem characterizes the validity of the implication
\[
 H_1\le H_2 \quad\Rightarrow \quad H_2^{-1}\le H_1^{-1}
\]
for a pair of bounded selfadjoint operators $H_1$ and $H_2$ having
bounded inverses; it in the infinite dimensional case has been
proved independently in \cite{S91,DM91b,HaNo0}; cf. also
\cite{HaNo}. Some extensions of this result, where the condition
$\min\{\sfi^+_2,\sfi^-_1\}<\infty$ is relaxed, are also contained in
\cite{S91,HaNo0,HaNo}.

\begin{theorem}\label{antith2}
Let $H_1$ and $H_2$ be bounded and boundedly invertible selfadjoint
operators in a separable Hilbert space $\sH$. Let
$\sfi(H_j)=\{\sfi^+_j,\sfi^-_j,\sfi^0_j,\sfi^\infty_j\}$ be the
inertia of $H_j$, $j=1,2$, and assume that
$\min\{\sfi^+_2,\sfi^-_1\}<\infty$ and that $H_1\leq H_2$. Then
\[
H_2^{-1} \leq H_1^{-1} \quad \textrm{if and only if}
\quad {\sf i}(H_1) = {\sf i}(H_2).
\]
\end{theorem}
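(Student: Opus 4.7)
The plan is to reduce the theorem to a single block-operator identity on $\sH\oplus\sH$, combined with the fact that congruence by a bounded invertible operator preserves the spectral inertia of a bounded selfadjoint operator. First I would eliminate the asymmetry in the hypothesis: if it takes the form $\sfi^+_2<\infty$ rather than $\sfi^-_1<\infty$, I replace $(H_1,H_2)$ by $(-H_2,-H_1)$, which preserves the ordering and invertibility assumptions, swaps $\sfi^+$ with $\sfi^-$, and reduces matters to the case $\sfi^-_1=\kappa<\infty$. In this case one already has $\sfi^-_2\le\sfi^-_1$, since every subspace that is uniformly $H_2$-negative is uniformly $H_1$-negative by the inequality $H_1\le H_2$.

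The main step is to form the block operator
\[
 N=\begin{pmatrix} H_2 & I \\ I & H_1^{-1} \end{pmatrix}\in[\sH\oplus\sH]
\]
and to compute $\sfi^-(N)$ via the two standard LDU factorizations, one based on the invertible $(1,1)$-block $H_2$ and one on the invertible $(2,2)$-block $H_1^{-1}$. Both factorizations have triangular factors that are bounded and boundedly invertible on $\sH\oplus\sH$, so $N$ is invertibly congruent to each of the block-diagonal operators $\diag(H_2,\,H_1^{-1}-H_2^{-1})$ and $\diag(H_2-H_1,\,H_1^{-1})$. Invoking invertible-congruence invariance of $\sfi^-$ together with $\sfi^-(A\oplus B)=\sfi^-(A)+\sfi^-(B)$, I obtain
\[
 \sfi^-_2+\sfi^-(H_1^{-1}-H_2^{-1})=\sfi^-(N)=\sfi^-(H_1^{-1})+\sfi^-(H_2-H_1).
\]
The right-hand side collapses to $\sfi^-_1$ because $H_2-H_1\ge 0$ and $\sfi^-(H_1^{-1})=\sfi^-_1$ (boundedly invertible selfadjoint operators and their inverses share negative spectra via $\lambda\mapsto 1/\lambda$). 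Hence
\[
 \sfi^-(H_1^{-1}-H_2^{-1})=\sfi^-_1-\sfi^-_2\in\{0,1,\ldots,\kappa\}.
\]

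Both implications of the theorem now follow. The inequality $H_2^{-1}\le H_1^{-1}$ is, by definition, $\sfi^-(H_1^{-1}-H_2^{-1})=0$, so it is equivalent to $\sfi^-_1=\sfi^-_2$. Bounded invertibility of $H_1$ and $H_2$ forces $\sfi^0_j=\sfi^\infty_j=0$ and $\sfi^+_j=\dim\sH-\sfi^-_j$, which upgrades this equality of negative indices to the full inertia equality $\sfi(H_1)=\sfi(H_2)$. The only step that requires real care, and the one I expect to be the main point of friction, is the invertible-congruence invariance of $\sfi^-$ used to turn the two LDU factorizations into the additive inertia identity for $N$. This is the clean invertible analogue of the Schur-complement inertia identity of Proposition~\ref{sylvester}, and it is justified by characterising $\sfi^-(A)$ as the supremum of the dimensions of subspaces on which $A$ is strictly negative (equivalently, as $\dim\ran E_A(-\infty,0)$), a characterisation that transports bijectively through any bounded invertible map.
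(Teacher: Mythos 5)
Your proof is correct, but there is nothing in the paper to measure it against: Theorem~\ref{antith2} is stated without proof and simply attributed to the literature (\cite{S91,DM91b,HaNo0}, cf.\ \cite{HaNo}). That said, your block-operator argument is essentially the route taken in the cited Hassi--Nordstr\"om papers, and it is very much in the spirit of the paper's own toolkit -- the paired LDU factorizations preceding Lemma~\ref{inertia} and the congruence/Schur-complement inertia count of Proposition~\ref{sylvester}. The individual steps all hold up: the symmetrization $(H_1,H_2)\mapsto(-H_2,-H_1)$ legitimately reduces everything to the case $\sfi^-_1=\kappa<\infty$; the two congruences of $N=\bigl(\begin{smallmatrix} H_2 & I \\ I & H_1^{-1}\end{smallmatrix}\bigr)$ to $\diag(H_2,\,H_1^{-1}-H_2^{-1})$ and $\diag(H_2-H_1,\,H_1^{-1})$ are implemented by bounded, boundedly invertible triangular factors; $\sfi^-$ is invariant under such congruences because it coincides with the supremum of dimensions of negative-definite subspaces (any such subspace is mapped injectively into $\ran E((-\infty,0))$ by the spectral projection, which gives the nontrivial half of that identification); and since $\sfi^-(H_2-H_1)=0$ and $\sfi^-(H_1^{-1})=\sfi^-_1=\kappa<\infty$, the additive identity $\sfi^-_2+\sfi^-(H_1^{-1}-H_2^{-1})=\kappa$ involves only finite cardinals, so the subtraction yielding $\sfi^-(H_1^{-1}-H_2^{-1})=\sfi^-_1-\sfi^-_2$ is legitimate. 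The final upgrade from $\sfi^-_1=\sfi^-_2$ to the full equality $\sfi(H_1)=\sfi(H_2)$ is also fine: bounded invertibility forces $\sfi^0_j=\sfi^\infty_j=0$, and $\sfi^+_j$ is then determined by $\sfi^-_j$ together with $\dim\sH$, whether the latter is finite or $\aleph_0$.
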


Very recently two extensions of Theorem~\ref{antith2} have been
established in \cite{BHSW2014} for a general pair of selfadjoint
operators and relations without any invertibility assumptions. For
the present purposes we need the second main antitonicity theorem
from \cite{BHSW2014}, which reads as follows.

\begin{theorem}\label{antinew2}
Let $H_1$ and $H_2$ be selfadjoint relations in a separable Hilbert
space $\sH$ which are semibounded from below. Let
$\sfi(H_j)=\{\sfi^+_j,\sfi^-_j,\sfi^0_j,\sfi^\infty_j\}$ be the
inertia of $H_j$, $j=1,2$, and assume that $\sfi^-_1<\infty$ and
that $H_1 \leq H_2$. Then
\[
 H_2^{-1} \leq H_1^{-1} \quad \textrm{if and only if} \quad
 {\sf i}_1^- = {\sf i}_2^-.
 \]
\end{theorem}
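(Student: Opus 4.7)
The direction $H_2^{-1}\le H_1^{-1}\Rightarrow \sfi_1^-=\sfi_2^-$ proceeds by a form-domain comparison. From $H_1\le H_2$, any finite-dimensional subspace $\sL\subset\dom\mathfrak t_{H_2}$ on which $\mathfrak t_{H_2}$ is strictly negative lies in $\dom\mathfrak t_{H_1}$ (the form inequality entails the reverse domain inclusion) and remains strictly negative for $\mathfrak t_{H_1}$, yielding $\sfi_2^-\le \sfi_1^-$. Since $H_j$ has only $\sfi_j^-<\infty$ negative eigenvalues, all bounded away from $0$, each $H_j^{-1}$ is also semibounded below; applying the same argument to $H_2^{-1}\le H_1^{-1}$ gives $\sfi^-(H_1^{-1})\le \sfi^-(H_2^{-1})$. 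The spectral identity $\sfi^-(H)=\sfi^-(H^{-1})$ --- the map $t\mapsto 1/t$ sends the negative spectrum of $H$ bijectively to the negative spectrum of $H^{-1}$, while $\ker H$ and $\mul H$ produce only $\sfi^\infty(H^{-1})$ and $\sfi^0(H^{-1})$ --- then yields $\sfi_1^-\le \sfi_2^-$, hence equality.

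\textbf{Sufficiency: reduction to Theorem~\ref{antith2}.} Assume $\sfi_1^-=\sfi_2^-=k<\infty$. Fix $a>0$ large enough that $H_j+a$ is uniformly positive, and set $C_j:=(H_j+a)^{-1}$ and $B_j:=C_j-\frac{1}{a}I$. Then $C_j$ are bounded nonnegative with $C_2\le C_1$ (by the semibounded form characterization of $\le$), and $B_j$ are bounded selfadjoint with $B_2\le B_1$. A spectral calculation yields the inertia identifications $\sfi^+(B_j)=\sfi^-(H_j)=k$ and $\sfi^-(B_j)=\sfi^+(H_j)+\sfi^\infty(H_j)$. A direct computation further gives $B_j^{-1}=-aI-a^2H_j^{-1}$ (on the operator part), so that the target inequality $H_2^{-1}\le H_1^{-1}$ is equivalent to $B_1^{-1}\le B_2^{-1}$. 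Applying Theorem~\ref{antith2} to the pair $(B_2,B_1)$ --- whose hypothesis $\min(\sfi^+(B_1),\sfi^-(B_2))\le k<\infty$ is satisfied --- would then conclude the argument, provided the full inertia equality $\sfi(B_1)=\sfi(B_2)$ holds.

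\textbf{Main obstacle.} The central difficulty lies in this last proviso: Theorem~\ref{antith2} requires $\sfi(B_1)=\sfi(B_2)$, while our hypothesis yields only $\sfi^+(B_1)=\sfi^+(B_2)=k$. The missing equality $\sfi^-(B_1)=\sfi^-(B_2)$, i.e., $\sfi^+(H_1)+\sfi^\infty(H_1)=\sfi^+(H_2)+\sfi^\infty(H_2)$, is not automatic. I would resolve this by decomposing each $H_j$ along the finite-dimensional spectral subspace $E_j((-\infty,0))$ of dimension $k$ and its orthogonal complement (where $H_j$ is nonnegative). On the nonnegative parts, classical Kre\u{\i}n antitonicity for nonnegative semibounded relations directly delivers the inverse inequality; on the finite-dimensional negative parts, a matrix version of Theorem~\ref{antith2} applies with automatically matching inertia $(0,k,0,0)$. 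The partial inequalities are then combined via a Schur-complement argument in the spirit of Proposition~\ref{sylvester}; this gluing is nontrivial because the negative spectral subspaces of $H_1$ and $H_2$ need not coincide, but the common finite dimension $k$ together with the ambient inequality $H_1\le H_2$ makes the combination feasible. The ancillary cases with $\ker H_j\ne 0$ or $\mul H_j\ne 0$ are handled by the standard decomposition of selfadjoint relations into operator and multivalued parts.
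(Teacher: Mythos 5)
First, a point of reference: the paper does not prove this theorem at all. It is quoted verbatim as the ``second main antitonicity theorem'' from \cite{BHSW2014} (Behrndt, Hassi, de Snoo, Wietsma), so there is no in-paper proof to compare yours against; your attempt has to stand on its own. Your necessity direction is essentially fine: $H_1\le H_2$ gives $\sfi_2^-\le\sfi_1^-$ by the form characterization of the ordering, the inverses are again semibounded below because the finitely many negative eigenvalues are bounded away from $0$, and the spectral mapping $t\mapsto 1/t$ gives $\sfi^-(H_j)=\sfi^-(H_j^{-1})$, so the same argument applied to $H_2^{-1}\le H_1^{-1}$ closes the loop.

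The sufficiency direction, however, contains a genuine gap, and you have in fact located it yourself in your ``Main obstacle'' paragraph without resolving it. Two concrete problems. First, the reduction via $B_j=(H_j+a)^{-1}-a^{-1}$ requires $B_j$ to be \emph{boundedly invertible} in order to invoke Theorem~\ref{antith2}; since $B_j=f(H_j)$ with $f(t)=-t/(a(t+a))$ and $f(0)=0$, this fails whenever $0\in\sigma(H_j)$ --- not only when $\ker H_j\neq\{0\}$ (which you defer to a ``standard decomposition'') but also when $0$ lies in the essential spectrum without being an eigenvalue, a case your sketch does not touch. Second, and more fundamentally, Theorem~\ref{antith2} demands the \emph{full} inertia equality $\sfi(B_1)=\sfi(B_2)$, whereas the hypothesis of Theorem~\ref{antinew2} only delivers $\sfi^+(B_1)=\sfi^+(B_2)=k$; the remaining equality $\sfi^+(H_1)+\sfi^\infty(H_1)=\sfi^+(H_2)+\sfi^\infty(H_2)$ is simply false in general (e.g.\ one relation can have a nontrivial multivalued part and infinite-dimensional positive spectral subspace while the other is a positive definite operator). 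Your proposed repair --- split each $H_j$ along its $k$-dimensional negative spectral subspace, apply classical antitonicity on the nonnegative parts and a matrix version on the negative parts, then ``glue'' via a Schur-complement argument --- is precisely the hard content of the theorem, and as you concede, the negative spectral subspaces of $H_1$ and $H_2$ need not be aligned, the ordering $H_1\le H_2$ does not restrict to the individual spectral subspaces, and no mechanism is given for recombining the partial inequalities for unbounded relations. The whole reason Theorem~\ref{antinew2} is a separate result in \cite{BHSW2014} is that for semibounded relations matching the negative index alone suffices; establishing that requires an argument that does not route through the full-inertia Theorem~\ref{antith2}, so the plan as written cannot be completed.
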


The ordering appearing in Theorem~\ref{antinew2} is defined via
\[
 H_1 \leq H_2 \quad \Leftrightarrow \quad
 0\le (H_2-aI)^{-1}\le (H_1-aI)^{-1},
\]
where $a<\min\{\mu(H_1),\mu(H_2)\}$ is fixed and $\mu(H_i)\in\dR$
stands for the lower bound of $H_i$, $i=1,2$. Notice that the
conditions $H_1 \le H_2$ and $\sfi^-_1<\infty$ imply
$\sfi^-_2<\infty$; in particular these conditions already imply that
the inverses $H_1^{-1}$ and $H_2^{-1}$ are also semibounded from
below. For further facts on ordering of semibounded selfadjoint
operators and relations the reader is referred to
\cite{Kato,BHSW2014}.

\subsection{Cayley transforms}

Define the linear fractional transformation $\cC$, taking a linear
relation $A$ into a linear relation $\cC(A)$, by
\begin{equation}
\label{bilin}
  \cC(A)=\{\, \{f+f',f-f'\} :\, \wh f=\{f,f'\} \in A\,\}= -I+2(I+A)^{-1}.
\end{equation}
Clearly, $\cC$ maps the (closed) linear relations one-to-one onto
themselves, $\cC^{2}=I$, and
\begin{equation}
\label{bilin0}
 \cC(A)^{-1}=\cC(-A),
\end{equation}
for every linear relation $A$. Moreover,
\[
  \begin{split}
   &\dom \cC(A)=\ran (I+A), \quad \ran \cC(A)= \ran (I-A), \\
   &\ker (\cC(A)-I)=\ker A, \quad  \ker (\cC(A)+I)=\mul A.
  \end{split}
\]
In addition, $\cC$ preserves closures, adjoints, componentwise sums,
orthogonal sums, intersections, and inclusions. The relation
$\cC(A)$ is symmetric if and only if $A$ is symmetric. It follows
from \eqref{bilin} and
\begin{equation}
\label{bilin2}
 \|f+f'\|^2-\|f-f'\|^2=4\RE(f',f)
\end{equation}
that $\cC$ gives a one-to-one correspondence between nonnegative
(selfadjoint) linear relations and symmetric (respectively,
selfadjoint) contractions. Observe the following mapping properties
of $\cC$ on the extended real line $\dR\cup\{\pm\infty\}$:
\begin{equation}\label{Hin0}
 \begin{split}
  &\cC([0,1])=[0,1];\quad \cC([-1,0])=[1,+\infty];\\
  &\cC([1,+\infty])=[-1,0];\quad \cC([-\infty,-1])=[-\infty,-1].
 \end{split}
\end{equation}
If $H$ is a selfadjoint relation then
\[
 \sfi^-(I+H)=\sfi^-(\cC(H)+I),\qquad
 \sfi^-(I-H)=\sfi^-(\cC(H)^{-1}+I),
\]
and hence
\begin{equation}\label{Hin1}
 \begin{split}
 &\sigma(H)\cap (-\infty,-1)=\sigma(\cC(H))\cap (-\infty,-1),\\
 &\sigma(H)\cap (1,+\infty)=\sigma(\cC(H)^{-1})\cap (-\infty,-1)=\sigma(\cC(H))\cap (-1,0);
 \end{split}
\end{equation}
which can also be seen from \eqref{Hin0}.

\subsection{M.G. Kre\u{\i}n's approach
to the extension theory with a minimal negative index}

The crucial step in the M.G. Kre\u{\i}n's approach to the extension
theory of nonnegative operators is the connection to the selfadjoint
contractive extensions of a Hermitian contraction $T$ via the Cayley
transform in \eqref{bilin}. The extension of this approach to the
present indefinite situation is based on the fact that the Cayley
transform still reverses the ordering of selfadjoint extensions due
to the antitonicity result formulated in Theorem~\ref{antinew2} and
the fact that in Theorem~\ref{T:contr} $T\in\Ext_{T_1,\k}(-1,1)$ if
and only if $T=T^*\supset T_1$ and $\nu_-(I\pm T)=\k_\mp$.

A semibounded symmetric relation $A$ is said to be quasi-nonnegative
if the associated form $a(f,f):=(f',f)$, $\{f,f'\}\in A$, has a
finite number of negative squares, i.e. every $A$-negative subspace
$\sL\subset \dom A$ is finite-dimensional. If the maximal dimension
of $A$-negative subspaces is finite and equal to $\kappa\in\dZ_+$,
then $A$ is said to be $\kappa$-nonnegative; the more precise
notations $\nu_-(a)$, $\nu_-(A)$ are used to indicate the maximal
number of negative squares of the form $a$ and the relation $A$,
respectively; here $\nu_-(a)=\nu_-(A)$. A selfadjoint extension $\wt A$
of $A$ is said to be a $\kappa$-nonnegative extension of $A$ if
$\nu_-(\wt A)=\kappa$. The set of all such extension will be
denoted by $\Ext_{A,\kappa}(0,\infty)$.

If $A$ is a closed symmetric relation in the Hilbert space $\sH$
with $\kappa_-(A)<\infty$, then the subspace $\sH_1:=\ran(I+A)$ is
closed, since the Cayley-transform $T_1=\cC(A)$ is a closed bounded
symmetric operator in $\sH$ with $\dom T_1=\sH_1$. Let $P_1$ be the
orthogonal projection onto $\sH_1$ and let $P_2=I-P_1$. Then the
form
\begin{equation}
\label{defa1}
 a_1(f,f):=(P_1f',f), \quad \{f,f'\}\in A,
\end{equation}
is symmetric and it has a finite number of negative squares.

\begin{lemma}\label{a1lemma}
Let $A$ be a closed symmetric relation in $\sH$ with
$\kappa_-(A)<\infty$ and let $T_1=\cC(A)$. Then the form $a_1$ is
given by
\begin{equation}
\label{a1form}
  a_1(f,f)=a(f,f)+\|P_2 f \|^2
\end{equation}
with $\nu_-(a_1)\le \nu_-(A)$. Moreover,
\[
 4a_1(f,f)=\|g\|^2-\|T_{11}g\|^2,
 \quad
 4a(f,f)=\|g\|^2-\|T_1g\|^2,
\]
where $\{f,f'\}\in A$, $g=f+f'$, and $T_{11}=P_1T_1$. In addition,
$T_{21}=P_2T_1$ satisfies $\|T_{21}g \|^2=\|P_2f\|=-(P_2 f,f')$.
\end{lemma}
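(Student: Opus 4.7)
The plan is to parametrize members of $A$ through the Cayley transform. Given $\{f,f'\}\in A$ and $g=f+f'$, one has $g\in\ran(I+A)=\sH_1$ and $T_1 g=f-f'$, so that $f=(g+T_1g)/2$ and $f'=(g-T_1g)/2$. The single observation that drives the whole lemma is that $g\in\sH_1$ forces $P_2 g=0$, i.e.\ $P_2 f'=-P_2 f$ and in particular $\|P_2 f\|=\|P_2 f'\|$. Everything else is bookkeeping built on this identity together with the polarization formula \eqref{bilin2}.

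First I would prove $4a(f,f)=\|g\|^2-\|T_1 g\|^2$. By \eqref{bilin2} one has $\|f+f'\|^2-\|f-f'\|^2=4\RE(f',f)$, and because $A$ is symmetric, $(f',f)=(f,f')=\overline{(f',f)}$ is real, so $4\RE(f',f)=4(f',f)=4a(f,f)$.

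Next I would establish the additive identity $a_1(f,f)=a(f,f)+\|P_2 f\|^2$. Write
\[
(P_1 f',f)=(f',f)-(f',P_2 f),
\]
and use self-adjointness of $P_2$ together with $P_2 f'=-P_2 f$ to get
\[
(f',P_2 f)=(P_2 f',P_2 f)=-(P_2 f,P_2 f)=-\|P_2 f\|^2,
\]
yielding \eqref{a1form}. As a consequence, any $f$ for which $a_1(f,f)<0$ satisfies $a(f,f)\le a_1(f,f)<0$, so every $a_1$-negative subspace of $\dom A$ is $A$-negative; hence $\nu_-(a_1)\le\nu_-(A)$.

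Finally, for the identities involving $T_{11}$ and $T_{21}$, compute
\[
T_{21}g=P_2 T_1 g=P_2(f-f')=2P_2 f,
\]
whence $\|T_{21}g\|^2=4\|P_2 f\|^2=-4(P_2 f,f')$ (the latter equality using $P_2 f'=-P_2 f$). Combining this with the orthogonal decomposition $\|T_1 g\|^2=\|T_{11}g\|^2+\|T_{21}g\|^2$ and the two identities already shown gives
\[
4a_1(f,f)=4a(f,f)+4\|P_2 f\|^2=\bigl(\|g\|^2-\|T_1 g\|^2\bigr)+\|T_{21}g\|^2=\|g\|^2-\|T_{11}g\|^2,
\]
completing the proof. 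No real obstacle appears; the only subtle point is the reality of $(f',f)$, which must be invoked explicitly to pass from $\RE(f',f)$ to $a(f,f)$, and the relation $P_2 f'=-P_2 f$, which must be extracted from $g\in\sH_1$.
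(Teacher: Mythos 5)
Your proposal is correct and follows essentially the same route as the paper's own proof: both use the polarization identity \eqref{bilin2} for $4a(f,f)=\|g\|^2-\|T_1g\|^2$, the observation $T_{21}g=2P_2f=-2P_2f'$ to get $(P_2f',f)=-\|P_2f\|^2$ and hence \eqref{a1form}, and the orthogonal splitting $\|T_1g\|^2=\|T_{11}g\|^2+\|T_{21}g\|^2$ to conclude. Your explicit remarks on the reality of $(f',f)$ and the inequality $\nu_-(a_1)\le\nu_-(A)$ are harmless additions that the paper leaves implicit.
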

\begin{proof}
The formula \eqref{bilin2} shows that if $T_1=\cC(A)$ and $\{f,f'\}\in A$, then
\[
 \|g\|^2-\|T_1g\|^2=4 (f',f)=4 a(f,f), \quad g=f+f'\in \dom T_1= \sH_1.
\]
Moreover, $T_{21}g=P_2(f-f')=2P_2f=-2P_2f'$ gives $(P_2f',f)=-\|P_2 f\|^2$
and
\[
 \|T_{21}g \|^2=-4(P_2 f',P_2 f)=-4(P_2f',f).
\]
In particular, \eqref{a1form} follows from
\[
 a(f,f)=(P_1f',f)+(P_2f',f)=a_1(f,f)-\|P_2 f\|^2.
\]
Finally, \eqref{a1form} combined with $\|T_{21}g\|^2=4\|P_2f\|^2$
leads to
\[
 4a_1(f,f)=\|g\|^2-\|T_1g\|^2+\|T_{21}g\|^2=\|g\|^2-\|T_{11}g\|^2. \qedhere
\]
\end{proof}

The main result in this section concerns the existence and a description of all
selfadjoint extensions $\wt A$ of a symmetric relation $A$ for which
$\nu_-(\wt A)<\infty$ attains the minimal value $\nu_-(a_1)$.
A criterion for the existence of such a selfadjoint extension is established, in which case
all such extensions are described in a manner that is familiar
from the case of nonnegative operators. To formulate the result assume that
the selfadjoint quasi-contractive extensions $T_m$ and $T_M$ of
$T_1$ as in Theorem~\ref{T:contr} exist, and denote the corresponding selfadjoint
relations $A_F$ and $A_K$ by
\begin{equation}
\label{1.28}
  A_F = X(T_{m})=-I+2(I+T_m)^{-1},
\quad
  A_K = X(T_{M})=-I+2(I+T_M)^{-1}.
\end{equation}

\begin{theorem}
\label{KreinThm} Let $A$ be a closed symmetric relation in
$\sH$ with $\nu_-(A)<\infty$ and denote $\kappa=\nu_-(a_1)\,(\le \nu_-(A))$,
where $a_1$ is given by \eqref{defa1}.
Then $\Ext_{A,\kappa}(0,\infty)$ is nonempty if and only if $\nu_-(A)=\kappa$.
In this case $A_F$ and $A_K$ are well-defined and they belong to $\Ext_{A,\kappa}(0,\infty)$.
Moreover, the formula
\begin{equation}\label{CTA}
 \tA=-I+2(I+T)^{-1}
\end{equation}
gives a bijective correspondence between the quasi-contractive
selfadjoint extensions $T\in\Ext_{T_1,\k}(-1,1)$ of $T_1$ and the
selfadjoint extensions $\tA=\tA^*\in
\Ext_{A,\kappa}(0,\infty)$ of $A$. Furthermore, $\tA=\tA^*\in
\Ext_{A,\kappa}(0,\infty)$ precisely when
\begin{equation}\label{AKAF}
 A_K\le \wt A\le A_F,
\end{equation}
or equivalently, when $A_F^{-1}\le \wt A^{-1}\le A_K^{-1}$, or
\begin{equation}\label{resolA}
 (A_F+I)^{-1}\le (\tA+I)^{-1}\le (A_K+I)^{-1}.
\end{equation}
The set $\Ext_{A^{-1},\kappa}(0,\infty)$ is also nonempty and $\wt A\in \Ext_{A,\kappa}(0,\infty)$
if and only if $\wt A^{-1}\in \Ext_{A^{-1},\kappa}(0,\infty)$.
The extreme selfadjoint extensions $A_F$ and $A_K$ of $A$ are connected to those of $A^{-1}$ via
\begin{equation}
\label{symme}
 (A^{-1})_F=(A_K)^{-1}, \quad (A^{-1})_K=(A_F)^{-1}.
\end{equation}
\end{theorem}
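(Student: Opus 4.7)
The plan is to apply the Cayley transform $T_1 := \cC(A)$ to translate the problem about the semibounded relation $A$ into the setting of Theorem \ref{T:contr}. Lemma \ref{a1lemma} gives the two key identifications
\[
 \nu_-(A) = \nu_-(I - T_1^* T_1), \qquad
 \kappa = \nu_-(a_1) = \nu_-(I - T_{11}^2),
\]
so the condition $\nu_-(A) = \kappa$ coincides exactly with the minimality criterion \eqref{crit} in Theorem \ref{T:contr} applied to $T_1$. Consequently $\Ext_{T_1,\kappa}(-1,1)$ is nonempty iff $\nu_-(A) = \kappa$, and in that case the extreme elements $T_m, T_M$ exist and yield well-defined $A_F = \cC(T_m)$, $A_K = \cC(T_M)$ by \eqref{1.28}.

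For the bijection \eqref{CTA}, the Cayley transform \eqref{bilin} is a one-to-one correspondence of closed selfadjoint relations, and combining Lemma \ref{sign} with the spectral mapping properties \eqref{Hin0}--\eqref{Hin1} shows $\sfi^-(\wt A) = \nu_-(I+T) + \nu_-(I-T) = \nu_-(I - T^2)$ for $\wt A = \cC(T)$. Hence $T \in \Ext_{T_1,\kappa}(-1,1)$ iff $\wt A \in \Ext_{A,\kappa}(0,\infty)$, and in particular $A_F$ and $A_K$ belong to the latter set.

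The ordering is the critical step. From the identity $(\wt A + I)^{-1} = (I+T)/2$ the chain $T_m \le T \le T_M$ of Theorem \ref{T:contr}(iv) is equivalent to \eqref{resolA}. To reach \eqref{AKAF}, apply the antitonicity Theorem \ref{antinew2} to the bounded selfadjoint operators $I + T_m \le I + T \le I + T_M$, which share the common negative index $\sfi^- = \kappa_-$: this yields $(I+T_M)^{-1} \le (I+T)^{-1} \le (I+T_m)^{-1}$ as semibounded selfadjoint relations (note $(I+T_i)^{-1}$ is semibounded from below since $I+T_i$ has only finitely many negative eigenvalues bounded strictly away from $0$). Multiplying by $2$ and shifting by $-I$ preserves the ordering, producing $A_K \le \wt A \le A_F$. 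A further application of Theorem \ref{antinew2} to these semibounded relations with common $\sfi^- = \kappa$ gives the equivalent form $A_F^{-1} \le \wt A^{-1} \le A_K^{-1}$. All implications are reversible: given any selfadjoint extension $\wt A$ of $A$ with $A_K \le \wt A \le A_F$, the sandwich forces $\sfi^-(\wt A) = \kappa$, so $\wt A \in \Ext_{A,\kappa}(0,\infty)$.

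For \eqref{symme} and the statements about $A^{-1}$, a direct computation from \eqref{bilin} yields $\cC(A^{-1}) = -\cC(A) = -T_1$, so the whole scheme applies equally to $A^{-1}$ via the Cayley transform. By \eqref{E:5} the extreme contractive extensions of $-T_1$ are $(-T_1)_m = -T_M$ and $(-T_1)_M = -T_m$, whose Cayley transforms give $(A^{-1})_F = \cC(-T_M) = \cC(T_M)^{-1} = A_K^{-1}$ and $(A^{-1})_K = \cC(-T_m) = A_F^{-1}$, establishing \eqref{symme}. The same bijection shows $\wt A \in \Ext_{A,\kappa}(0,\infty)$ iff $T \in \Ext_{T_1,\kappa}(-1,1)$ iff $-T \in \Ext_{-T_1,\kappa}(-1,1)$ iff $\wt A^{-1} = \cC(-T) \in \Ext_{A^{-1},\kappa}(0,\infty)$. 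The principal obstacle in the proof is the two-way passage between the operator ordering $T_m \le T \le T_M$ and the semibounded relation ordering $A_K \le \wt A \le A_F$: the classical Loewner-type argument for nonnegative operators fails here because the Cayley transform must invert operators with nontrivial negative spectrum, and it is precisely the antitonicity result of \cite{BHSW2014} for semibounded relations with matching finite negative indices that supplies the missing ingredient.
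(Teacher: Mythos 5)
Your proposal is correct and follows essentially the same route as the paper's proof: Cayley transform plus Lemma \ref{a1lemma} to reduce everything to Theorem \ref{T:contr}, the antitonicity Theorems \ref{antith2}/\ref{antinew2} (exploiting the constancy of $\kappa_-$ and $\kappa$ on the solution set) to pass between $T_m\le T\le T_M$, \eqref{resolA}, \eqref{AKAF} and the inverse inequalities, and \eqref{E:5} together with \eqref{bilin0} and \eqref{1.28} for \eqref{symme}. The only deviations are cosmetic and harmless: you obtain $\sfi^-(\wt A)=\nu_-(I-T^2)$ from Lemma \ref{sign} and the spectral mapping \eqref{Hin0}--\eqref{Hin1} instead of from Lemma \ref{a1lemma}, and you close the converse of \eqref{AKAF} by monotonicity of the negative index under the ordering rather than by rerunning the chain of equivalences.
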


\begin{proof}
Since $\nu_-(A)<\infty$ the Cayley transform $T_1=\cC(A)$ defines a bounded symmetric
operator in $\sH$ with $\sH_1=\dom T_1=\ran (I+A)$.
It follows from Lemma \ref{a1lemma} that
\[
 \nu_-(A)=\nu_-(a)=\nu_-(I-T_1^*T_1), \quad \nu_-(a_1)=\nu_-(I-T_{11}^2),
\]
and therefore the condition $\nu_-(A)=\kappa$ is equivalent to solvability criterion
\eqref{crit} in Theorem~\ref{T:contr}. Moreover, $\wt A$ is a selfadjoint extension
of $A$ if and only if $T=\cC(\wt A)$ is selfadjoint extension of $T_1$ and by Lemma~\ref{a1lemma}
the equality $\nu_-(\wt A)=\nu_-(I-T^2)$ holds. Therefore, it follows from Theorem~\ref{T:contr}
that the set $\Ext_{A,\kappa}(0,\infty)$ is nonempty if and only if $\nu_-(A)=\kappa$ and in
this case the formula \eqref{CTA} establishes a one-to-one correspondence between the
sets $\Ext_{A,\kappa}(0,\infty)$ and $\Ext_{T_1,\k}(-1,1)$.

Next the characterizations \eqref{AKAF} and \eqref{resolA} for the
set $\Ext_{A,\kappa}(0,\infty)$ are established. Let $\wt A\in
\Ext_{A,\kappa}(0,\infty)$ and let $T=\cC(\wt A)$. According to
Theorem~\ref{antinew2} $T=\cC(\wt A)\in \Ext_{T_1,\k}(-1,1)$ if and
only if $T$ satisfies the inequalities $T_m\le T\le T_M$. It is
clear from the formulas \eqref{1.28} and \eqref{CTA} that the
inequalities $I+T_m\le I+T\le I+T_M$ are equivalent to the
inequalities \eqref{resolA}.

On the other hand, $\nu_-(I-T_{11}^2)=\nu_-(I-T^2)$ and hence the indices $\kappa_+=\nu_-(I-T_{11})=\nu_-(I-T)$ and
$\kappa_-=\nu_-(I+T_{11})=\nu_-(I+T)$ do not depend on $T=\cC(\wt A)$; cf. \eqref{JJpm}.
The mapping properties \eqref{Hin1} of the Cayley transform imply that the number
of eigenvalues of $\wt A$ on the open intervals $(-\infty,-1)$ and $(-1,0)$ are also constant and
equal to $\kappa_-$ and $\kappa_+$, respectively.
In particular, since $\kappa_-=\nu_-(I+T)$ is constant we can apply
Theorem~\ref{antith2} to conclude that the inequalities $I+T_m\le I+T\le I+T_M$ are equivalent to
\[
 (I+T_M)^{-1}\le (I+T)^{-1}\le (I+T_m)^{-1},
\]
which due to the formulas \eqref{1.28} and \eqref{CTA} can be rewritten as
$A_F+I\le \wt A+I \le A_K+I$, or as $A_F\le \wt A\le A_K$. This proves \eqref{AKAF}.
Since $\nu_-(\wt A)=\kappa=\kappa_-+\kappa_+$ is also constant,
an application of Theorem~\ref{antinew2} shows that the inequalities \eqref{AKAF}
are also equivalent to $A_F^{-1}\le \wt A^{-1}\le A_K^{-1}$.

As to the inverse $A^{-1}$, notice that $\nu_-(A^{-1})=\nu_-(A)$.
Moreover, since $A^{-1}=\cC(-T_1)$ it is clear that $\ran
(I+A^{-1})=\dom T_1$ and thus the form associated to $A^{-1}$ via
\eqref{defa1} satisfies
$a_1^{(-1)}(f',f')=(P_1f,f')=(P_1f',f)=a_1(f,f)$. In particular,
$\nu_-(a_1^{(-1)})=\nu_-(a_1)$. Moreover, it is clear that
$\nu_-(A^{-1})=\nu_-(A)$. Consequently, the equality
$\nu_-(A)=\nu_-(a_1)$ is equivalent to the equality
$\nu_-(A^{(-1)})=\nu_-(a_1^{(-1)})$. Furthermore, it is clear that
$\wt A\in \Ext_{A,\kappa}(0,\infty)$ if and only if $\wt A^{-1}\in
\Ext_{A^{-1},\kappa}(0,\infty)$.

Finally, the relations \eqref{symme} are obtained from
\eqref{E:5}, \eqref{bilin0}, and \eqref{1.28}.
\end{proof}

It follows from Theorem~\ref{KreinThm} that the extensions $\tA \in
\Ext_{A,\kappa}(0,\infty)$ admit a uniform lower bound $\mu\le
\mu(\wt A)\,(\mu\le 0)$. Consequently, the resolvents of these
extensions satisfy
\begin{equation}
\label{inequ}
 (A_F+a)^{-1}\le (\tA + a)^{-1}\le (A_K+a)^{-1},
 \quad
 a>-\mu.
\end{equation}
This follows from the formula
\begin{equation*}
    (\tA+a)^{-1}
    = \frac{1}{a-1}\, I-\frac{2}{(a-1)^2}\,
      \left(T+\frac{a+1}{a-1}\right)^{-1}
\end{equation*}
and the fact that $T=\cC(\wt A)\in \Ext_{T_1,\k}(-1,1)$ has
precisely $\kappa_-$ eigenvalues below the number
$-{(a+1)}/{(a-1)}<-1$, so that the inequalities $T_m\leq T\leq T_M$
in Theorem~\ref{T:contr} imply the inequalities \eqref{inequ} by
Theorem~\ref{antith2}.

The antitonicity Theorems~\ref{antith2},~\ref{antinew2} can be also
used as follows. If the inequalities \eqref{AKAF} and $A_F^{-1}\le
\wt A^{-1}\le A_K^{-1}$ hold, then $\kappa=\nu_-(\wt
A)=\nu_-(A_K)=\nu_-(A_F)$ is constant. If, in addition,
\eqref{resolA} is satisfied, then it follows from \eqref{AKAF} that
$\kappa_-=\nu_-(I+\wt A)=\nu_-(I+A_K)=\nu_-(I+A_F)$ is constant, so
that also $\kappa_+=\nu_-(I-\wt A)=\nu_-(I-A_K)=\nu_-(I-A_F)$ is
constant. However, in this case the equality $\nu_-(a_1)=\nu_-(A)$
need not hold and there can also be selfadjoint extensions $\wt A$
of $A$ with
\[
 \nu_-(\wt A)=\nu_-(A_K)=\nu_-(A_F)>\nu_-(A)\ge \nu_-(a_1),
\]
which neither satisfy the inequalities \eqref{AKAF} and
\eqref{resolA}, nor the equalities $\nu_-(I+\wt A)=\kappa_-$ and
$\nu_-(I-\wt A)=\kappa_+$. It is emphasized that the result in
Theorem~\ref{KreinThm} characterizes all selfadjoint extensions in
$\Ext_{A,\kappa}(0,\infty)$ under the minimal index condition
$\kappa=\nu_-(a_1)=\nu_-(A)$.

In the case that $A$ is nonnegative one has automatically
$\kappa=\nu_-(a_1)=\nu_-(A)=0$. Therefore, Theorem~\ref{KreinThm} is
a precise generalization of the famous characterization of the class
$\Ext_{A}(0,\infty)$ (with $\kappa=0$) due to M.G. Kre\u{\i}n
\cite{Kr47} to the case of a finite negative (minimal) index
$\kappa>0$. The selfadjoint extensions $A_F$ and $A_K$ of $A$ are
called the Friedrichs (hard) and the Kre\u{\i}n-von Neumann (soft)
extension, respectively; these notions go back to \cite{F,JvN}. The
extremal properties \eqref{inequ} of the Friedrichs and
Kre\u{\i}n-von Neumann extensions were discovered by Kre\u{\i}n
\cite{Kr47} in the case when $A$ is a densely defined nonnegative
operator. The case when $A\ge 0$ is not densely defined was
considered by T.~Ando and K.~Nishio \cite{AN}, and E.A.~Coddington
and H.S.V.~de~Snoo \cite{CS}.
In the nonnegative case the formulas \eqref{symme} can be found in
\cite{AN} and \cite{CS}.

\subsection{Kre\u{\i}n's uniqueness criterion}

To establish a generalization of Kre\u{\i}n's uniqueness criterion for the equality
$A_F=A_K$ in Theorem~\ref{KreinThm}, i.e., for $\Ext_{A,\kappa}(0,\infty)$ to
consists only of one extension, we first derive some general facts on $J$-contractions
by means of their commutation properties.

Let $\sH_1$ and $\sH_2$ be Hilbert spaces with symmetries $J_1$ and $J_2$, respectively,
and let $T\in[\sH_1,\sH_2]$ be a $J$-contraction, i.e., $J_1-T^*J_2T\geq 0$.
Let $D_T$ and $D_{T^*}$ be the corresponding defect operators and
let $J_T$ and $J_{T^*}$ be their signature operators as defined in Section \ref{sec3}.
The first lemma connects the kernels of the defect operators $D_T$ and $D_{T^*}$.

\begin{lemma}
\label{lm0} Let $T\in [\sH_1,\sH_2]$, let $J_i$ be a symmetry in $\sH_i$, $i=1,2$,
and let $D_T$ and $D_{T^*}$ be the defect operators of $T$ and $T^*$, respectively.
Then
\begin{equation}
\label{lm01}
  J_2T(\ker D_{T})=\ker D_{T^*},
  \quad
  T^*J_2(\ker D_{T^*})=\ker D_{T}.
\end{equation}
In particular,
\[
 \ker D_{T}=\{0\} \mbox{ if and only if } \ker D_{T^*}=\{0\}.
\]
\end{lemma}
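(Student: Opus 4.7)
My plan is to reduce the statement to kernels of the bounded self-adjoint operators $J_1-T^*J_2T$ and $J_2-TJ_1T^*$. Since $D_T=|J_1-T^*J_2T|^{1/2}$ and $D_{T^*}=|J_2-TJ_1T^*|^{1/2}$, the spectral calculus yields
\[
 \ker D_T=\ker(J_1-T^*J_2T), \qquad \ker D_{T^*}=\ker(J_2-TJ_1T^*),
\]
after which the whole statement becomes a direct consequence of the intertwining identities \eqref{eqC3}.

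For the first equality, I would take $f\in\ker D_T$ and apply the second identity in \eqref{eqC3}: the right-hand side $TJ_1(J_1-T^*J_2T)f$ vanishes, which forces $(J_2-TJ_1T^*)J_2Tf=0$, so $J_2Tf\in\ker D_{T^*}$. An entirely parallel application of the first identity of \eqref{eqC3} to a vector $g\in\ker D_{T^*}$ yields the inclusion for the second equality. To upgrade these inclusions to equalities, my plan is to compose the two maps and verify that the composition reduces to the identity on the respective kernels; this uses the reformulation $T^*J_2T=J_1-J_TD_T^2$ (and its analog with the roles of $T,T^*$ swapped), the vanishing $D_T^2f=0$ on $\ker D_T$, and the involutions $J_1^2=J_2^2=J_T^2=I$.

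The main bookkeeping care required lies in this surjectivity step, where one must track the symmetries $J_1$, $J_2$, and the signature $J_T$ through the composition in the correct order; however, since $J_T$ commutes with $D_T$ and all three symmetries are involutions, this is routine rather than a genuine obstacle. Once the bijections between $\ker D_T$ and $\ker D_{T^*}$ are in hand, the in-particular statement $\ker D_T=\{0\}$ if and only if $\ker D_{T^*}=\{0\}$ is immediate, as a nonzero vector in one kernel would be sent to a nonzero vector in the other under these maps.
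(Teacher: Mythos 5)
Your proposal is correct and follows essentially the same route as the paper's proof: both arguments reduce to $\ker D_T=\ker(J_1-T^*J_2T)$ and $\ker D_{T^*}=\ker(J_2-TJ_1T^*)$ and then read the two inclusions off the intertwining identities \eqref{eqC3}. Your ``compose the two maps and check the composition is the identity'' step is only a light repackaging of the paper's argument, which for $f\in\ker D_{T^*}$ writes $f=J_2T(J_1T^*f)$ with $J_1T^*f\in\ker D_T$; in both cases the computation rests on $T^*J_2Tf=J_1f$ on $\ker D_T$ and $TJ_1T^*g=J_2g$ on $\ker D_{T^*}$, exactly as you indicate.

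One caveat you should be aware of: what the first identity of \eqref{eqC3} actually delivers for $g\in\ker D_{T^*}$ is $J_1T^*g\in\ker D_T$, so your argument (like the paper's own proof, which handles the second identity ``by symmetry'') establishes $J_1T^*(\ker D_{T^*})=\ker D_T$, \emph{not} the literal second identity $T^*J_2(\ker D_{T^*})=\ker D_T$ of \eqref{lm01}. The literal version is in fact false in general when $J_2\neq I$: take $\sH_1=\sH_2=\dC^2$, $J_1=I$, $J_2=\diag(1,-1)$, and $T=T^*=\left(\begin{smallmatrix}\sqrt2&1\\1&\sqrt2\end{smallmatrix}\right)$; then $\ker D_T=\spn\{e_1\}$ and $\ker D_{T^*}=\spn\{(-\sqrt2,1)\}$, while $T^*J_2(-\sqrt2,1)=(-3,-2\sqrt2)\notin\ker D_T$ (whereas $J_1T^*(-\sqrt2,1)=(-1,0)\in\ker D_T$, as it should be). So the statement contains a misprint in the order of $T^*$ and $J_2$, and your computation proves the corrected version; the ``in particular'' conclusion that $\ker D_T=\{0\}$ if and only if $\ker D_{T^*}=\{0\}$ is unaffected either way.
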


\begin{proof}
It suffices to show the first identity in~\eqref{lm01}. If $\ff \in
\ker D_T=\ker J_TD_T^2$, then the second identity in \eqref{eqC3}
implies that $J_2T \ff \in \ker J_{T^*}D_{T^*}^2=\ker D_{T^*}$.
Hence, $J_2T(\ker D_{T}) \subset \ker D_{T^*}$. Conversely, let $\ff
\in \ker D_{T^*}$. Then $0=J_{T^*}D_{T^*}^2\ff$ or, equivalently,
$\ff = J_2TJ_1T^*\ff$, and here $J_1T^*\ff \in \ker D_T$ by the first
identity in \eqref{eqC3}. This proves the reverse inclusion.
\end{proof}

\begin{lemma}
\label{lm2} Let the notations be as in Lemma~\ref{lm0}. Then
\[
 \ran T \cap \ran D_{T^*}=\ran TJ_1D_T=\ran D_{T^*}L_T,
\]
where $L_T$ is the link operator defined in Corollary~\ref{linkoper}.
\end{lemma}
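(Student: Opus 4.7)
The identity $\ran TJ_1D_T=\ran D_{T^*}L_T$ is the easiest piece: by the definition of the link operator in Corollary~\ref{linkoper} one has $D_{T^*}L_T=TJ_1D_T\uphar\sD_T$, and since $TJ_1D_T$ vanishes on $\ker D_T$ (because $D_T$ does), the range of $TJ_1D_T$ over $\sH_1$ equals its range over $\sD_T$, which in turn equals $\ran D_{T^*}L_T$. I will mention this at the start so the remainder of the proof can focus on the genuine identity
\[
 \ran T\cap\ran D_{T^*}=\ran TJ_1D_T.
\]

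The inclusion $\supset$ is immediate: $\ran TJ_1D_T\subset\ran T$ trivially, and $\ran TJ_1D_T=\ran D_{T^*}L_T\subset\ran D_{T^*}$ by the first step. The nontrivial inclusion is $\subset$, and this is where the link operators and the factorization identities \eqref{eqC3} do the work.

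For the main step, I will take $y\in\ran T\cap\ran D_{T^*}$, write $y=Tx$ with $x\in\sH_1$ and $y=D_{T^*}v$ with $v\in\sD_{T^*}$ (we may always reduce $v$ to $\sD_{T^*}$ since $\ker D_{T^*}$ is killed), and apply $T^*J_2$ to both sides. On the left, $T^*J_2T=J_1-J_TD_T^2$ from the very definition of $D_T$ and $J_T$; on the right, the defining relation $D_TL_{T^*}=T^*J_2D_{T^*}\uphar\sD_{T^*}$ from Corollary~\ref{linkoper} yields $T^*J_2D_{T^*}v=D_TL_{T^*}v$. Equating the two expressions gives
\[
 J_1x=J_TD_T^2x+D_TL_{T^*}v=D_T\bigl(J_TD_Tx+L_{T^*}v\bigr),
\]
where I have used the commutation $D_TJ_T=J_TD_T$ (both are functions of $J_1-T^*J_2T$). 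Setting $z:=J_TD_Tx+L_{T^*}v\in\sD_T$ and using $J_1^2=I$, this rearranges to $x=J_1D_Tz$, and hence
\[
 y=Tx=TJ_1D_Tz\in\ran TJ_1D_T,
\]
completing the inclusion.

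The only subtlety I anticipate is bookkeeping: namely, justifying that $v$ can be chosen in $\sD_{T^*}$ so that the identity $T^*J_2D_{T^*}\uphar\sD_{T^*}=D_TL_{T^*}$ from Corollary~\ref{linkoper} is applicable, and that $z=J_TD_Tx+L_{T^*}v$ genuinely lies in $\sD_T$ (which follows because $D_Tx\in\ran D_T\subset\sD_T$, $J_T$ preserves $\sD_T$, and $\ran L_{T^*}\subset\sD_T$). Once these housekeeping points are observed, the argument is a one-line application of the $(T^*J_2)$-pairing to the relation $Tx=D_{T^*}v$.
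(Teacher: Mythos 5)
Your proof is correct and follows essentially the same route as the paper's: both reduce the nontrivial inclusion to the link-operator identity $T^*J_2D_{T^*}\uphar\sD_{T^*}=D_TL_{T^*}$ together with the defect relation, and both arrive at the identical preimage $J_TD_Tx+L_{T^*}v$. The only cosmetic difference is that the paper splits $\varphi=(J_2-TJ_1T^*)J_2\varphi+TJ_1T^*J_2\varphi$ using $J_2^2=I$ and the second identity of \eqref{eqC3}, whereas you apply $T^*J_2$ to $Tx=D_{T^*}v$ and solve for $x$ using $J_1^2=I$.
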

\begin{proof}
By the commutation formulas in Corollary~\ref{linkoper} $\ran TJ_1D_T=\ran
D_{T^*}L_T \subset \ran T \cap \ran D_{T^*}$. Hence, it
suffices to prove the inclusion
\[
 \ran T \cap \ran D_{T^*} \subset \ran TJ_1D_T.
\]
Suppose that $\varphi \in  \ran T \cap \ran D_{T^*}$.
Then Corollary~\ref{linkoper} shows that $T^*J_2\varphi = D_T f$ for some $f\in \sD_T$,
while the second identity in \eqref{eqC3} implies that
\[
 (J_2-TJ_1T^*)J_2\ff=TJ_1D_Tg,
\]
for some $g\in \sD_T$. Therefore,
\[
 \varphi=(J_2-TJ_1T^*)J_2\varphi+TJ_1T^*J_2\varphi
 =TJ_1D_Tg+TJ_1D_Tf=TJ_1D_T(g+f)
\]
and this completes the proof.
\end{proof}

We can now characterize $J$-isometric operators $T\in [\sH_1,\sH_2]$ as follows.

\begin{proposition}
\label{lm3} With the notations as in Lemma~\ref{lm0} the
following statements are equivalent:
\begin{enumerate}
\def\labelenumi{\rm (\roman{enumi})}
\item $T$ is $J$-isometric, i.e., $T^*J_2T=J_1$;
\item $\ker T=\{0\}$ and $\ran T \cap \ran D_{T^*} =\{0\}$;
\item for some, and equivalently for every, subspace $\sL$ with
$\ran J_2T\subset \sL$ one has
\begin{equation}
\label{apu0}
  \sup_{f\in\sL}\frac{|(f,T\varphi)|}{\|D_{T^*}f\|}=\infty
\quad \mbox{for every } \varphi\in \sH_1\backslash\{0\},
\end{equation}
i.e., there is no constant $0\le C<\infty$ satisfying $|(f,T\varphi)|\le C \|D_{T^*}f\|$ for every $f\in \sL$,
if $\varphi\neq 0$.
\end{enumerate}
\end{proposition}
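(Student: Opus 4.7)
My approach is to prove (i) $\Leftrightarrow$ (ii) directly from Lemma~\ref{lm2} and then establish (ii) $\Leftrightarrow$ (iii) by combining a Douglas-type range-inclusion argument in one direction with an explicit computation exploiting the $J$-isometry identity in the other. The ``some $\Leftrightarrow$ every'' part of (iii) will follow automatically from the cycle, since monotonicity $\sup_{f\in\sL_1}\le \sup_{f\in\sL_2}$ (for $\sL_1\subset \sL_2$) means (iii) is weakest when $\sL=\sH_2$ and strongest when $\sL=\ran J_2 T$.

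For (i) $\Leftrightarrow$ (ii): assertion (i) is equivalent to $D_T=0$, so Lemma~\ref{lm2} gives $\ran T\cap \ran D_{T^*}=\ran TJ_1D_T=\{0\}$, while $T\varphi=0$ forces $J_1\varphi=T^*J_2T\varphi=0$, hence $\ker T=\{0\}$. Conversely, under (ii), Lemma~\ref{lm2} implies $TJ_1D_T=0$, and the unitarity of $J_1$ combined with $\ker T=\{0\}$ yields $D_T=0$, i.e., (i).

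For (iii) with some $\sL$ $\Rightarrow$ (ii): enlarging to $\sL=\sH_2$ only increases the supremum, so (iii) holds with $\sH_2$. By Douglas' theorem \cite{Douglas}, finiteness of $\sup_{f\in\sH_2}|(f,T\varphi)|/\|D_{T^*}f\|$ is equivalent to $T\varphi\in \ran D_{T^*}$; hence (iii) forces $T\varphi\notin \ran D_{T^*}$ for every $\varphi\neq 0$. Since $0\in \ran D_{T^*}$, this yields both $\ker T=\{0\}$ and $\ran T\cap \ran D_{T^*}=\{0\}$. For (ii) (equivalently (i)) $\Rightarrow$ (iii) with every $\sL$: the $J$-isometry identity gives
\[
 (J_2-TJ_1T^*)J_2T=J_2^2T-TJ_1T^*J_2T=T-TJ_1^2=0,
\]
hence $D_{T^*}J_2T=0$. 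For any $\sL\supset \ran J_2T$ and any $\varphi\neq 0$, the vectors $f_n=nJ_2T\psi\in\sL$ satisfy $\|D_{T^*}f_n\|=0$ and $(f_n,T\varphi)=n(\psi,J_1\varphi)$; since $J_1$ is unitary and $\varphi\neq 0$, one may select $\psi$ with $(\psi,J_1\varphi)\neq 0$, making the supremum infinite.

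The main subtlety lies in managing the range of admissible $\sL$ in (iii). The weakest form ($\sL=\sH_2$) is handled cleanly by Douglas' theorem, while the strongest form (any $\sL$ containing $\ran J_2T$, including $\ran J_2T$ itself) is handled by the vanishing identity $D_{T^*}J_2T=0$, which characterizes $J$-isometry through the structure in \eqref{eqC3}. Together, these two endpoints force every intermediate $\sL$ to give the same conclusion, so the quantifier alternation in (iii) is resolved.
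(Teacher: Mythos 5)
Your proof is correct and follows essentially the same route as the paper's: the implication (ii) $\Rightarrow$ (i) via Lemma~\ref{lm2}, the vanishing of $D_{T^*}J_2T$ under $J$-isometry to produce a witness $f\in\ran J_2T$ with zero denominator and nonzero numerator, and the range-inclusion characterization of finiteness of the supremum for (iii) $\Rightarrow$ (ii). The only cosmetic differences are that you obtain $D_{T^*}J_2T=0$ by direct algebra from $T^*J_2T=J_1$ rather than through the link operator $L_{T^*}$, and you invoke the full Douglas theorem where the paper needs only the elementary Cauchy--Schwarz direction.
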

\begin{proof}
(i) $\Rightarrow$ (iii) Let $\sL$ be an arbitrary subspace with
$\ran J_2T\subset\sL$. Assume that the supremum in \eqref{apu0} is
finite for some $\varphi=J_1\psi\in \sH_1$.
Then there exists $0\le C<\infty$, such
that
\[
  |(f,TJ_1\psi)|\le C\|D_{T^*}f\| \quad \mbox{ for every } f\in \sL.
\]
Since $\ran J_2T\subset \sL$ and $T$ is $J$-isometric, also the following inequality holds:
\begin{equation}
\label{apu02}
  \|\psi\|^2=(J_1T^*J_2T\psi,\psi) \le C\|D_{T^*}J_2T\psi\|.
\end{equation}
By taking adjoints (and zero extension for $L_{T^*}$) in the second identity in Corollary~\ref{linkoper}
it is seen that $D_{T^*}J_2T\psi=L_{T^*}^*D_T \psi=0$, since $T$ is
$J$-isometric. Hence \eqref{apu02} implies $\varphi=J_1\psi=0$.
Therefore \eqref{apu0} holds for every $\varphi\neq 0$.

(iii) $\Rightarrow$ (ii) Assume that \eqref{apu0} is satisfied with
some subspace $\sL$. If (ii) does not hold, then either $\ker T\neq
\{0\}$, in which case \eqref{apu0} does not hold for $0\neq
\varphi\in \ker T$, or $\ran T\cap \ran D_{T^*}\neq \{0\}$. However,
then with $0\neq T\varphi=D_{T^*}h$ the supremum in \eqref{apu0} is
finite even if $f$ varies over the whole space $\sH_2$. Thus, if
(ii) does not hold then \eqref{apu0} fails to be true.

(ii) $\Rightarrow$ (i) Let $\ran T \cap \ran D_{T^*} =\{0\}$. Then
by Lemma \ref{lm2} $TJ_1D_T=0$ and it follows from $\ker{T}=\{0\}$ that
$D_T=0$, i.e., $T$ is isometric. This completes the proof.
\end{proof}

After these preparations we are ready to prove the analog of Kre\u{\i}n's uniqueness
criterion for the equality $T_{m}=T_{M}$ in the case of quasi-contractions appearing
in Theorem~\ref{T:contr}.

\begin{theorem}
\label{pr1.2} Let the Hilbert space $\sH$ be decomposed as $\sH=\sH_1\oplus \sH_2$
and let $T_1 \in [\sH_1,\sH]$ be a symmetric quasi-contraction satisfying the condition
\eqref{crit} in Theorem~\ref{T:contr}. Then $T_{m}=T_{M}$ if and only if
\begin{equation}
\label{1.20}
  \sup_{f\in \sH_1}\frac{|(T_1f,\ff)|^2}{(|I-T_1^*T_1|f,f)}=\infty
  \quad
  \mbox{for every } \ff \in\sH_2\setminus \{0\}.
\end{equation}
\end{theorem}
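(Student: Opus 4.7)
The plan is to reduce the condition $T_m=T_M$ to the $J$-isometry characterization of Proposition~\ref{lm3}, applied to the quasi-contractive defect operator $V=\clos(T_{21}D_{T_{11}}^{[-1]})$ from Theorem~\ref{T:contr}, and then to translate that characterization from $\sD_{T_{11}}$ back to $\sH_1$. By \eqref{E:6}, $T_M-T_m$ is supported on $\sH_2$ and equals $2(I-VJV^*)$ there, so $T_m=T_M$ precisely when $VJV^*=I_{\sH_2}$, i.e., when $V^*\in[\sH_2,(\sD_{T_{11}},J)]$ is $J$-isometric (recall that $V^*$ is automatically $J$-contractive under \eqref{crit}). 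Taking $T=V^*$, $J_1=I_{\sH_2}$, $J_2=J$, and $\sL=\sD_{T_{11}}$ (which contains $\ran JV^*$) in Proposition~\ref{lm3}(iii) then gives: $T_m=T_M$ if and only if, for every $\ff\in\sH_2\setminus\{0\}$,
\[
 \sup_{f\in\sD_{T_{11}}}\frac{|(f,V^*\ff)|^2}{(|J-V^*V|f,f)}=\infty.
\]

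The second step is to transfer this supremum to $\sH_1$ via the substitution $f=D_{T_{11}}g$, $g\in\sH_1$. Since $\ran D_{T_{11}}$ is dense in $\sD_{T_{11}}$ and both numerator and denominator are continuous in $f$, a standard perturbation argument shows the supremum is unchanged when $f$ is restricted to $\ran D_{T_{11}}$. Using $D_{T_{11}}V^*=T_{21}^*$ the numerator becomes $|(T_1g,\ff)|^2$ (with $\ff$ identified with $(0,\ff)\in\sH$), while the denominator becomes $(W^*Wg,g)$, where $W:=|J-V^*V|^{1/2}D_{T_{11}}:\sH_1\to\cran(J-V^*V)$. Thus the $J$-isometry condition on $V^*$ is equivalent to $\sup_g|(T_1g,\ff)|^2/(W^*Wg,g)=\infty$ for every $\ff\neq 0$.

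The crucial final step matches $(W^*Wg,g)$ with the target $(|I-T_1^*T_1|g,g)$. The identity $I-T_1^*T_1=D_{T_{11}}(J-V^*V)D_{T_{11}}=W^*J_GW$, with $J_G:=\sgn(J-V^*V)$, together with the equality $\nu_-(I-T_1^*T_1)=\k=\nu_-(J-V^*V)$ guaranteed by \eqref{crit}, brings Proposition~\ref{BKcor1}(ii) into play: it produces a unique $J$-isometric operator $C\in[\sH_F,\cran(J-V^*V)]$ with $W=C|I-T_1^*T_1|^{1/2}$, where $\sH_F=\cran(I-T_1^*T_1)$ carries the fundamental symmetry $J_F=\sgn(I-T_1^*T_1)$. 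The identity $C^*J_GC=J_F$ combined with unitarity of $J_F$ on $\sH_F$ yields the lower bound $\|Cx\|\ge\|x\|/\|C\|$, so $\ker C=\{0\}$, $\ran C$ is closed, and consequently $\ran C^*=\sH_F$. From $W^*=|I-T_1^*T_1|^{1/2}C^*$ one then reads off $\ran W^*=\ran|I-T_1^*T_1|^{1/2}$, and Douglas' theorem \cite{Douglas} converts this range equality into the operator equivalence $c_1|I-T_1^*T_1|\le W^*W\le c_2|I-T_1^*T_1|$ for some $c_1,c_2>0$. The quadratic forms $(W^*Wg,g)$ and $(|I-T_1^*T_1|g,g)$ are therefore comparable, whence the two suprema are simultaneously infinite for every $\ff\neq 0$; this establishes \eqref{1.20}.

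The principal obstacle is this last step: the "natural" denominator coming out of Proposition~\ref{lm3} is $(W^*Wg,g)$, but the theorem demands $(|I-T_1^*T_1|g,g)$, and these are genuinely different operators (their polar parts do not intertwine naively, since $|W^*J_GW|\neq W^*W$ in general). The resolution exploits the rigidity of the $J$-isometry $C$ furnished by Proposition~\ref{BKcor1}(ii): matching indices $\nu_-(I-T_1^*T_1)=\nu_-(J-V^*V)$ force $C$ to be bounded below, hence to have closed range, and Douglas' range characterization then upgrades the trivial pointwise inequality $|((I-T_1^*T_1)g,g)|\le(W^*Wg,g)$ to the full two-sided operator equivalence that makes the two sup-conditions equivalent.
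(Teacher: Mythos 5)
Your argument is correct and follows essentially the same route as the paper: reduce $T_m=T_M$ to $J$-isometry of $V^*$ via \eqref{E:6}, invoke Proposition~\ref{lm3}(iii), substitute $f=D_{T_{11}}g$, and match the denominators through the factorization $I-T_1^*T_1=D_{T_{11}}(J-V^*V)D_{T_{11}}$ and Proposition~\ref{BKcor1}(ii). The only (harmless) deviation is at the very end: the paper notes that $\ran D_VD_{T_{11}}$ is dense in $\sD_V$, so $C$ is $J$-unitary and the norm equivalence $\|D_VD_{T_{11}}g\|\asymp\|D_{T_1}g\|$ follows at once from $\|C\|,\|C^{-1}\|<\infty$, whereas you derive boundedness below of $C$ from $C^*J_GC=J_F$ and then route through closed ranges and Douglas' lemma to reach the same two-sided comparability.
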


\begin{proof}
Let $J=\sign(I-T_{11}^2)$.
According to Theorem \ref{T:contr} there is $V\in[\sD_{T_{11}},\sH_2]$,
such that $T_{21}=VD_{T_{11}}$; moreover, $V^*$ a $J$-contraction, i.e.,
$I-VJV^*\ge 0$. This implies that
\begin{equation}
\label{apu03}
 (T_1f,\ff)=(T_{21}f,\ff)=(D_{T_{11}}f,V^*\ff),
\end{equation}
and a direct calculation shows that
\begin{equation}
\label{apu04}
 I-T^*_1T_1=I-T_{11}^2-T_{21}^*T_{21}=JD_{T_{11}}^2-D_{T_{11}}V^*VD_{T_{11}}=
 D_{T_{11}}D_VJ_V D_VD_{T_{11}}.
\end{equation}
By construction $D_V\in[\sD_{T_{11}}]$ and therefore $\ran D_VD_{T_{11}}$ is dense in $\sD_V=\cran D_V$.
Furthermore, since $V^*$ is $J$-contractive it follows from Lemma~\ref{inertia} that
$\nu_-(J_V)=\nu_-(J)=\nu_-(I-T_{11}^2)$ and, therefore, the assumption \eqref{crit}
shows that $\nu_-(J_V)=\nu_-(I-T_1^*T_1)$. Now according to Proposition \ref{BKcor1} (ii)
if follows from \eqref{apu04} that there is a unique $J$-unitary operator $C\in[\sD_{T_1},\sD_V]$
such that $D_VD_{T_{11}}=CD_{T_{1}}$.

In view of \eqref{E:6} $T_m=T_M$ if and only if $V^*$ is
$J$-isometric. Since $\ran JV^*\subset \ov\ran D_{T_{11}}$, it
follows from Proposition \ref{lm3} that $T:=V^*$ satisfies the
condition \eqref{apu0} with $\sL=\ov\ran D_{T_{11}}$.

On the other hand, it follows from \eqref{apu04} and the $J$-unitarity of $C\in[\sD_{T_1},\sD_V]$ that
\[
  \|D_VD_{T_{11}}\|\le \|C\|\, \|D_{T_1}\|,\quad \|D_{T_1}\|\le \|C^{-1}\|\, \|D_VD_{T_{11}}\|.
\]
By combining this equivalence between the norms of $\|D_{T_1}\|$ and $\|D_VD_{T_{11}}\|$ with
the equality \eqref{apu03} one concludes that $V^*$ satisfies the condition \eqref{apu0}
precisely when $T_1$ satisfies the condition \eqref{1.20}.
\end{proof}

This result can be translated to the situation of Theorem
\ref{KreinThm} via Cayley transform to get the analog of
Kre\u{\i}n's uniqueness criterion for the equality $A_F=A_K$.

\begin{corollary}
\label{Aunique} Let $A$ be a closed symmetric relation in $\sH$
satisfying the condition $\nu_-(A)=\nu_-(a_1)<\infty$ in Theorem
\ref{KreinThm}. Then the equality $A_{F}=A_{K}$ holds if and only if
the following condition is fulfilled:
\begin{equation}
\label{1.20A}
  \sup_{g\in \sH_1}\frac{|((A+I)^{-1}g,\ff)|^2}{(|\widehat{A}|g,g)}=\infty
  \quad
  \mbox{for every } \ff \in\ker (A^*+I)\setminus \{0\},
\end{equation}
where $\widehat{A}=(I+A)^{-*}A(I+A)^{-1}$ is a bounded selfadjoint operator in $\sH_1=\ran (A+I)$.
\end{corollary}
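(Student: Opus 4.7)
The plan is to reduce the corollary to Theorem \ref{pr1.2} via the Cayley transform, the same way Theorem \ref{KreinThm} itself is proved. Let $T_1=\cC(A)$, which is a bounded symmetric operator with $\dom T_1=\sH_1=\ran(I+A)$; this subspace is closed, as noted before Lemma~\ref{a1lemma}. Since $A$ is closed, the complementary subspace is
\[
 \sH_2=\sH\ominus\ran(I+A)=\ker((I+A)^*)=\ker(A^*+I),
\]
which is exactly the set over which $\ff$ ranges in \eqref{1.20A}. From Lemma~\ref{a1lemma} we have $\nu_-(I-T_{11}^2)=\nu_-(a_1)$ and $\nu_-(I-T_1^*T_1)=\nu_-(A)$, so the hypothesis $\nu_-(A)=\nu_-(a_1)<\infty$ is precisely the solvability condition \eqref{crit} for $T_1$. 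Hence Theorem \ref{pr1.2} applies to $T_1$, and by the bijection $\wt A=\cC(T)^{-1}$ (cf. \eqref{1.28}) the equality $A_F=A_K$ is equivalent to $T_m=T_M$.

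Next I would translate the supremum in \eqref{1.20} into that in \eqref{1.20A}. For the numerator, $T_1 g=-g+2(I+A)^{-1}g$ for $g\in\sH_1$, and since $g\in\sH_1$ is orthogonal to every $\ff\in\sH_2=\ker(A^*+I)$, one gets
\[
 (T_1 g,\ff)=2((I+A)^{-1}g,\ff),
\]
so $|(T_1 g,\ff)|^{2}=4|((A+I)^{-1}g,\ff)|^{2}$. For the denominator, note that if $\{f,f'\}\in A$ and $g=f+f'$, then $f=(I+A)^{-1}g$ and, using $\mul A\perp\dom A$ for symmetric $A$, the form value $(f',f)$ depends only on $g$ and equals $(\wh Ag,g)_{\sH_1}$ by the definition $\wh A=(I+A)^{-*}A(I+A)^{-1}$. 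Combined with the identity $4a(f,f)=\|g\|^{2}-\|T_1g\|^{2}=((I-T_1^*T_1)g,g)$ from Lemma~\ref{a1lemma} this yields
\[
 ((I-T_1^*T_1)g,g)=4(\wh Ag,g),\qquad g\in\sH_1,
\]
so $I-T_1^*T_1=4\wh A$ as bounded selfadjoint operators on $\sH_1$, and consequently $|I-T_1^*T_1|=4|\wh A|$.

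Plugging these two identities into condition \eqref{1.20} produces exactly condition \eqref{1.20A}, and the corollary follows from Theorem \ref{pr1.2}. The only subtlety worth highlighting is the passage from the quadratic form identity $((I-T_1^*T_1)g,g)=4(\wh Ag,g)$ to the operator equality needed to take absolute values, which is immediate because both sides define bounded selfadjoint forms on the same Hilbert space $\sH_1$; with this in hand the reduction is routine.
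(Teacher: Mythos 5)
Your proposal is correct and follows the same overall route as the paper: pass to $T_1=\cC(A)$, identify $\sH_2=\sH\ominus\ran(I+A)=\ker(A^*+I)$, observe via Lemma~\ref{a1lemma} that the hypothesis $\nu_-(A)=\nu_-(a_1)$ is exactly condition \eqref{crit}, and translate condition \eqref{1.20} of Theorem~\ref{pr1.2} into \eqref{1.20A}. The one place where you genuinely diverge is the identification of the denominators. The paper records only the quadratic-form identity $((I-T_1^*T_1)g,g)=4(\widehat A g,g)$ and then invokes Proposition~\ref{BKcor1}\,(ii) to produce a $J$-unitary $C$ with $D_{T_1}=C|\widehat A|^{1/2}$, from which a two-sided comparison of $\|D_{T_1}g\|$ and $\||\widehat A|^{1/2}g\|$ follows; that equivalence of denominators is all that is needed to transfer the ``supremum $=\infty$'' condition. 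You instead polarize the form identity to the operator equality $I-T_1^*T_1=4\widehat A$ on $\sH_1$, hence $|I-T_1^*T_1|=4|\widehat A|$, and the factor $4$ cancels against the factor $4$ coming from $(T_1g,\ff)=2((A+I)^{-1}g,\ff)$, so the two suprema are literally equal term by term. This is legitimate --- both sides are bounded selfadjoint operators on $\sH_1=\ran(I+A)$, every element of which has the form $f+f'$ with $\{f,f'\}\in A$ --- and it is more elementary than the paper's detour through Proposition~\ref{BKcor1}, which buys nothing extra here. Two cosmetic points: the inverse Cayley transform should be written $\wt A=\cC(T)$, since $\cC$ is an involution and $\cC(T)^{-1}=\cC(-T)$ by \eqref{bilin0}; and, as you implicitly do, $A$ in the definition of $\widehat A$ must be read as the operator part $A_s$ when $\mul A\neq\{0\}$, which is harmless because $\ker (I+A)^{-*}=(\dom A)^\perp\supset\mul A$.
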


\begin{proof}
Let $T_1=\cC(A)$ so that $\{f,f'\}\in A$ if and only if $\{f+f',2f\}\in T_1+I$; see \eqref{bilin}.
Then with $g=f+f'\in\dom T_1=\sH_1$ and $\varphi\in \sH_2=(\dom T_1)^\perp$ one has
\[
 (T_1g,\varphi)=((T_1+I)g,\varphi)=2((A+I)^{-1}g,\varphi). 
\]
Let $A_s=P_sA$ be the operator part of $A$; here $P_s$ stand for the orthogonal projection onto
$\mul A=(\dom A^*)^\perp=\ker (T_1+I)$. Then the form $a(f,f)=(f',f)$ associated with $A$ can be rewritten
as $a(f,f)=(A_sf,f)$, $f\in \dom A$, and thus
\[
 ((I-T_1^*T_1)g,g)=4(f',f)=4(A_s(I+A)^{-1}g,(I+A)^{-1}g)),
\]
where $2(I+A)^{-1}=T_1+I$ is a bounded operator from $\sH_1$ into $\sH$.
Then clearly $\widehat{A}=(I+A)^{-*}A_s(I+A)^{-1}$ is a bounded selfadjoint operator in $\sH_1$
and, moreover, $\nu_-(\widehat{A})=\nu_-(a)=\nu_-(I-T_1^*T_1)$; see Lemma~\ref{a1lemma}.
Thus, it follows from Proposition \ref{BKcor1} that there is a $J$-unitary operator $C$ from
$\cran \widehat{A}$ into $\sD_{T_1}$ such that $D_{T_1}=C|\widehat{A}|^{1/2}$.
As in the proof of Theorem \ref{KreinThm} this implies the equivalence of the conditions
\eqref{1.20} and  \eqref{1.20A}.
\end{proof}

Observe that if $A$ is nonnegative then with $\{f,f'\}\in A$ and $g=f+f'\in\sH_1$,
\[
 ((A+I)^{-1}g,\varphi)=(f,\varphi), \quad (A_s(I+A)^{-1}g,(I+A)^{-1}g))=(A_sf,f),
\]
and, therefore, in this case the condition \eqref{1.20A} can be rewritten as
\[
   \sup_{\{f,f'\}\in A} \frac{|(f,\varphi)|^2}{(f',f)}=\infty
  \quad
  \mbox{for every } \varphi \in\ker (A^*+I)\setminus \{0\},
\]
the criterion which for a densely defined operator $A$ was obtained
in \cite{Kr47} and for a nonnegative relation $A$ can be found from
\cite{HMS04,Ha04}.

\bigskip

\noindent \textbf{Acknowledgements.} Main part of this work was
carried out during the second author's sabbatical year 2013-2014; he
is grateful about the Professor Pool's grant received from the Emil
Aaltonen Foundation.

\end{document}